\numberwithin{equation}{section}
\theoremstyle{plain}
 \newtheorem{thm}{Theorem}[section]
 \newtheorem{cor}[thm]{Corollary}
 \newtheorem{lem}[thm]{Lemma}
 \newtheorem{prop}[thm]{Proposition}
\theoremstyle{definition}
\theoremstyle{remark}
 \newtheorem{rem}[thm]{Remark}
\newcommand{\bsk}{{\boldsymbol{k}}}
\renewcommand{\l}{\ell}
\newcommand{\R}{\mathcal{R}}
\newcommand{\Rp}{\R^+}
\newcommand{\C}{{\mathbb{C}}}
\newcommand{\B}{\mathcal{B}}
\newcommand{\cc}{\mathbf{c}}
\DeclareMathOperator*{\Res}{Res}
\newcommand{\acs}{\mathfrak{a}_\mathbb{C}^*}
\newcommand{\al}{{\boldsymbol{\alpha}}}
\newcommand{\be}{{\boldsymbol{\beta}}}
\newcommand{\simarrow}{\xrightarrow{\smash[b]{\lower 0.7ex\hbox{$\sim$}}}}
\newcommand{\Dki}{D_\bsk(\varTheta_i)}
\newcommand{\Dko}{D_\bsk(\varTheta_1)}
\newcommand{\Dkr}{D_\bsk(\B)}
\newcommand{\Dkz}{D_\bsk(\emptyset)}
\newcommand{\PW}{\mathcal{PW}(\mathfrak{a}^*_\mathbb{C})}
\newcommand{\Cl}{\operatorname{Cl}}
\newcommand{\sgn}{\operatorname{sgn}}
\title[Inversion formula for the hypergeometric Fourier transform]{
Inversion formula for the 
hypergeometric Fourier transform 
associated with a 
root system of type $BC$}
\author{Tatsuo \textsc{Honda}}
\address{Learning Support Center, Faculty of Engineering, Takushoku University,
815-1 Tatemachi, Hachioji, Tokyo 193-0985, Japan}
\email{thonda@ner.takushoku-u.ac.jp}
\author{Hiroshi \textsc{Oda}}
\address{Faculty of Engineering, Takushoku University,
815-1 Tatemachi, Hachioji, Tokyo 193-0985, Japan}
\email{hoda@la.takushoku-u.ac.jp}
\author{Nobukazu \textsc{Shimeno}
}
\address{School of Science, Kwansei Gakuin University, 
1 Gakuen Uegahara, Sanda, Hyogo 669-1330, Japan}
\email{shimeno@kwansei.ac.jp}
\subjclass[2010]{33C67(primary), and 43A90(secondary)}
\keywords{Heckman-Opdam hypergeometric function, hypergeometric Fourier transform, Jacobi function, spherical function}
\begin{document}
\begin{abstract}
We give the inversion formula and the Plancherel formula for the hypergeometric 
Fourier transform associated with a root system of type $BC$, when the multiplicity 
parameters are not necessarily non-negative. 
\end{abstract}

\maketitle

\section*{Introduction}

Heckman and Opdam \cite{HO87, Hec87,Hec94, Hec97, {Op:Cherednik}, Op2000,HO2020}
 have developed a theory of hypergeometric functions 
associated with root systems. When the multiplicity function $\bsk$ takes some particular values, the 
Heckman-Opdam hypergeometric function coincides with the restriction to a 
Cartan subspace $\mathfrak{a}$  of the zonal spherical 
function on a Riemannian symmetric space of the non-compact type. 

In this group case, 
the associated harmonic analysis have been developed by Harish-Chandra, Gindikin-Karpelevich, 
Helgason, Gangolli, Rosenberg, and other researchers (cf. \cite{GV, Hel2}). In particular, an explicit 
inversion formula, the 
Paley-Wiener theorem, and the Plancherel theorem for the 
spherical Fourier transform have been established. 

Opdam \cite{Op:Cherednik} generalized these results 
to the hypergeometric Fourier transform,
where the multiplicity function may take arbitrary non-negative values.
In this case there are only continuous spectra.
More precisely, the Plancherel measure is $\cc(\lambda,\bsk)^{-1}
\cc(-\lambda,\bsk)^{-1}d\mu(\lambda)$ on $\sqrt{-1}\mathfrak{a}^*$,
where $\cc(\lambda,\bsk)$ 
is Harish-Chandra's 
$\cc$-function and $\mu$ is a normalized Lebesgue measure. 
In \cite{Op99}
Opdam also studied the case of negative multiplicity functions when the root system 
is reduced.
In such a situation, spectra with supports of lower dimensions appear in the Plancherel measure, in addition to the most continuous spectra described above. 

If the root system is of type $BC_1$, then the associated Heckman-Opdam hypergeometric 
function is nothing but the Jacobi function, which was introduced and studied by Flensted-Jensen and 
Koornwinder \cite{FJ77, Ko75, Ko84}. 
The Jacobi function is an even eigenfunction of a second order ordinary differential operator 
and can be written by the Gauss hypergeometric function. 
When the multiplicity function corresponds to
a rank one Riemannian symmetric space of the noncompact type, 
the second order differential operator is the radial part of the Laplace-Beltrami 
operator and the Jacobi function is 
the restriction  of the zonal spherical 
function to a 
Cartan subspace $\mathfrak{a}\simeq \mathbb{R}$. 
Let $\bsk_s$ and $\bsk_\ell$ denote the 
multiplicity parameters for the short roots $\pm\beta$ and the long roots $\pm2\beta$, respectively. 
Flensted-Jensen \cite[Appendix~1]{FJ77} proved the inversion formula for 
the Jacobi transform (the hypergeometric Fourier transform in the $BC_1$ case)
under the conditions $\bsk_s,\,\bsk_\ell \in\mathbb{R}$ and $\bsk_s+\bsk_\ell>-\frac12$.
If $\bsk$ further satisfies
$\bsk_s+\bsk_\ell+\frac12-\big|\bsk_\ell-\frac12\big|\geq 0$, then only continuous 
spectra appear. In the other cases, finite series of discrete spectra appear in addition to the continuous spectra.
These discrete spectra and the corresponding Plancherel measure are obtained from 
residues of $\cc(\lambda,\bsk)^{-1}\cc(-\lambda,\bsk)^{-1}$. 
Flensted-Jensen \cite{FJ77} applied his results on the Jacobi transform
to harmonic analysis of spherical functions 
on the universal covering group of $SU(n,1)$ associated with a one-dimensional $K$-type. 
The Jacobi transform also can be applied to the harmonic analysis on some 
homogeneous vector bundles over hyperbolic spaces (cf. \cite{DP, Sh95}). 
In these group cases, discrete spectra correspond to relative discrete series representations. 

The Heckman-Opdam hypergeometric function is a real analytic joint eigenfunction 
of a commuting family of differential operators. 
In a group case, the differential operators are radial parts of the invariant 
differential operators. The inversion formula  for the hypergeometric 
Fourier transform gives an expansion of an arbitrary Weyl group invariant 
function in terms of the Heckman-Opdam 
hypergeometric functions. Heckman-Opdam theory gives a simultaneous generalization of the Euclidean 
Fourier analysis (the case of $\bsk\equiv 0$), 
the theory of spherical functions, and the Jacobi analysis, and it provides a rich 
framework of harmonic analysis associated with root systems. 

In this paper we will study the case of the root system of type $BC$ and arbitrary rank 
when the multiplicity 
function is not necessarily non-negative. 
Except for the case of type $BC_1$  
mentioned above or a group case studied by the third author \cite{Sh94}, 
this study has remained open for many years. 
We have decided to study this case
for its application to harmonic analysis of 
spherical functions associated with certain $K$-types on connected semisimple Lie 
groups of finite center (cf. \cite[final comment]{OdSh}). 

For the root system of type $BC_r$ with $r\geq 2$,
 there are three multiplicity parameters $\bsk_s,\,\bsk_m$ and $\bsk_\ell$ 
corresponding to the short, medium, and long roots respectively. 
We prove the inversion formula, the Paley-Wiener theorem, and 
the Plancherel theorem for the hypergeometric Fourier transform under the conditions
$\bsk_s,\,\bsk_m,\,\bsk_\ell\in \mathbb{R}$,
$\bsk_s+\bsk_\ell>-\frac12$ and $\bsk_m\geq 0$. 
These conditions on $\bsk$ make the spectral problem well-posed and cover the known group cases. 
An explicit expression of the Plancherel measure is obtained
by calculus of residues of $\cc(\lambda,\bsk)^{-1}\cc(-\lambda,\bsk)^{-1}$.
In particular, the square 
integrable hypergeometric functions are classified and
their $L^2$-norms are calculated explicitly. 
Our method using residue calculus
closely follows that of \cite{Sh94}, 
where the third author  studied 
spherical functions associated with a one-dimensional $K$-type 
on an irreducible Hermitian symmetric space and obtained the inversion formula for the 
spherical transform. 

This paper is organized as follows.
In Section \ref{sect:HP} we review the Heckman-Opdam hypergeometric 
function. In Section \ref{sect:ft}
we define the hypergeometric Fourier transform associated with a root system of type $BC$ for various $\bsk$
and derive a first form of inversion formula
(Theorem~\ref{thm:1st} and Theorem~\ref{thm:cont}) from Opdam's result for the non-negative multiplicity functions. 
In Section~\ref{sect:tempered} we define
some sets of tempered spectra \eqref{eqn:D}
that are crucial to the main results of this paper
and study 
tempered and square integrable hypergeometric functions.
In Section~\ref{sect:partial} we introduce a partial sum of Harish-Chandra series
in order to simplify the residue calculus in the subsequent section.
In Section~\ref{sect:inversion}
we prove the final form of inversion formula 
(Theorem~\ref{thm:main}), the Paley-Wiener theorem (Theorem~\ref{thm:pw}), 
the Plancherel theorem (Theorem~\ref{thm:plancherel}),
and give the classification of square integrable hypergeometric functions
and their $L^2$-norms explicitly (Corollary~\ref{cor:lthg}).
Explicit formulas of 
the residues of $\cc(\lambda,\bsk)^{-1}\cc(-\lambda,\bsk)^{-1}$ that contribute to the 
Plancherel measure are given by \eqref{eqn:fdefd}, \eqref{eqn:fdefd2} and Proposition~\ref{prop:dtheta}. 

\section{
Heckman-Opdam hypergeometric functions
}\label{sect:HP}

In this section, we give basic notations for a root system of type $BC$ and review on 
the Heckman-Opdam hypergeometric function associated with a root system of type  $BC$. 
We refer the reader to original papers by Heckman and Opdam \cite{Hec87, HO87, Op:Cherednik} and 
survey articles \cite{Hec94, Hec97, Op2000, An, HO2020} for details.

Let $\mathbb{N}$ denote the set of non-negative integers. Let $r$ denote a positive integer and 
$\mathfrak{a}$ an $r$-dimensional Euclidean space with an inner product $\langle\,\cdot,\,\cdot\,\rangle$. 
We often identify $\mathfrak{a}$ and $\mathfrak{a}^*$ by using the inner product. 
We use the same notation $\langle\,\cdot,\,\cdot\,\rangle$ for the inner product 
on $\mathfrak{a}^*$ and the complex symmetric bilinear form 
on $\acs\times \acs$. 
For $\lambda\in \acs$ define $||\lambda||=\sqrt{\langle \lambda,\bar\lambda\rangle}$, 
where $\bar{\,\cdot\,}$ denotes complex conjugation. 
Let $\R\subset \mathfrak{a}^*$ be a root system of type $BC_r$. 
It is of the form
\begin{equation}
\R=\{\pm \beta_i,\,\pm 2\beta_i,\,\pm(\beta_p\pm \beta_q)\,;\,1\leq i\leq r,\,1\leq q<p\leq r\}, \\
\end{equation}
where $\{\beta_1,\beta_2,\dots,\beta_r\}$ is an orthogonal basis of $\mathfrak{a}^*$ with $||\beta_p||=||\beta_q||\,\,
(1\leq q<p\leq r)$. Moreover, 
we assume $||\beta_1||=2$, so $\{\frac12\beta_1,\dots,\frac12\beta_r\}$ 
forms an orthonormal basis of $\mathfrak{a}^*$. 
Let $\Rp$ denote the positive system of $\R$ defined by 
\begin{equation}
\Rp=\{\beta_i,\,2\beta_i,\,\beta_p\pm \beta_q\,;\,1\leq i\leq r,\,1\leq q<p\leq r\}.
\end{equation}
Let $\alpha_1,\dots,\alpha_r$ denote the positive roots defined by 
\begin{equation}
\alpha_i=\beta_{r+1-i}-\beta_{r-i}\,\,\,\,(1\leq i\leq r-1),\quad \alpha_r=\beta_1.
\end{equation}
Then 
\begin{equation}\label{eqn:sr}
\B=\{\alpha_1,\alpha_2,\dots,\alpha_r\}
\end{equation}
is the set of the simple roots in $\Rp$. 
We call $\pm \beta_i,\,\pm(\beta_p\pm \beta_q)\,\,(p<q),$ and $\pm 2\beta_i$,  short, medium, and long 
roots, respectively. 
Let $\Rp_s,\,\Rp_m$, and $\Rp_\l$ denote the subset of $\Rp$ consisting of 
the short, medium, and long roots, respectively. 
Let $W$ denote the Weyl group for $\R$. 
It is the semidirect product of $\mathbb{Z}_2^r$ and $\mathfrak{S}_r$. 
The group $\mathbb{Z}_2^r$ and $\mathfrak{S}_r$ act on $\mathfrak{a}^*$  as sign changes 
and permutations of $\{\beta_1,\dots,\beta_r\}$, respectively. 

Let $\bsk$ be a complex-valued $W$-invariant function on $\R$, which is called a multiplicity function.
Let $\mathcal{K}_\mathbb{C}$ be the space of the multiplicity functions. 
Let $\bsk_s,\,\bsk_m$, and $\bsk_\l$ be the values of $\bsk$ for the short, medium, 
and long roots respectively. 
We identify $\bsk\in \mathcal{K}_\mathbb{C}$ with the 3-tuple $(\bsk_s,\bsk_m,\bsk_\l)\in\C^3$, so
$
\mathcal{K}_\C\simeq\mathbb{C}^3
$. 
Let 
\[
\rho(\bsk)=\tfrac12\!\!\sum_{\alpha\,\in\,\Rp}\!\!\bsk_\alpha\, \alpha.
\]

For $\alpha\in\R$ let $\alpha^\vee$ denote the corresponding coroot 
$\alpha^\vee=2\alpha/\langle\alpha,\alpha\rangle$. 
Any $\lambda\in\acs$ can be written in the form
\begin{equation}\label{eqn:coordas}
\lambda=\tfrac12\sum_{i=1}^r \lambda_i \beta_i\quad\text{with}\quad 
\lambda_i=\langle\lambda,\beta_i^\vee\rangle\,\,\,\,(1\leq i\leq r).
\end{equation}
We identify $\lambda\in\acs$ with the $r$-tuple $(\lambda_1,\dots,\lambda_r)\in\C^r$, 
so $\acs\simeq\C^r$. 
In particular, 
\begin{equation}
\rho(\bsk)=(\bsk_s+2\bsk_\l, \bsk_s+2\bsk_\l+2\bsk_m,\dots,\bsk_s+2\bsk_\l+2(r-1)\bsk_m).
\end{equation}

For $\bsk\in\mathcal{K}_\C$ let $L(\bsk)$ denote the differential operator on $\mathfrak{a}$ 
defined by
\begin{equation}
L(\bsk)=\frac14\sum_{i=1}^r \partial_{\beta_i}^2+\sum_{\alpha\,\in\, \Rp}\!\!\bsk_\alpha\coth\frac{\alpha}{2}\,\,\partial_\alpha.
\end{equation} 
Here 
$\partial_\alpha$ denotes the directional derivative corresponding to $\alpha$. 
There exists an algebra $\mathbb{D}(\bsk)$ of $W$-invariant differential operators on $\mathfrak{a}$ 
with the properties: $L(\bsk)\in \mathbb{D}(\bsk)$ and 
there exists an algebra isomorphism $\gamma_\bsk:\mathbb{D}(\bsk)\simarrow S(\mathfrak{a}_\C)^W$, 
where $S(\mathfrak{a}_\C)^W$ denotes the set of the $W$-invariant elements in the symmetric 
algebra $S(\mathfrak{a}_\C)$. 
For $\lambda\in\acs$ we consider the system of differential equations
\begin{equation}\label{eqn:hgs}
D f=\gamma_\bsk(D)(\lambda)f\quad\text{for any }\,D\in\mathbb{D}(\bsk).
\end{equation}
In particular, it contains the equation
\begin{equation}\label{eqn:lapeq}
L(\bsk)f= (\langle\lambda,\lambda\rangle-\langle\rho(\bsk),\rho(\bsk)\rangle)f.
\end{equation}

Let
\[
 \mathfrak{a}_+
=\{x\in\mathfrak{a}\,;\,\alpha(x)>0 \,\,\text{ for all } \alpha\in\Rp\}.
\]
Let $Q_+$ denote the subset of $\mathfrak{a}^*$ spanned by $\B$ over $\mathbb{N}$. 
There exists a series solution $\varPhi(\lambda,\bsk)$ of \eqref{eqn:hgs} of the form
\begin{equation}\label{eqn:hcseries}
\varPhi(\lambda,\bsk;x)=\!\sum_{\kappa\,\in\, Q_+}\!\!a_\kappa(\lambda,\bsk)\,e^{(\lambda-\rho(\bsk)-\kappa)(x)}
\quad (x\in \mathfrak{a}_+),
\end{equation}
with $a_0(\lambda,\bsk)=1$. For a generic $\lambda$ all the coefficients $a_\kappa(\lambda,\bsk)\,\,\,(\kappa\in Q_+)$ 
are determined uniquely by \eqref{eqn:lapeq} and $\varPhi(\lambda,\bsk)$ converges on $\mathfrak{a}_+$. 
We call  $\varPhi(\lambda,\bsk)$ the Harish-Chandra series. 
As a function of the spectral parameter $\lambda\in\acs$, $\varPhi(\lambda,\bsk)$ is 
meromorphic with simple poles along hyperplanes of the form
\begin{equation}\label{eqn:condlambda0}
\langle\lambda,\alpha^\vee\rangle=j\quad\text{for some }\,\alpha\in\Rp, \,\,j=1,2,3,\dots.
\end{equation}
See \cite[Proposition~4.2.5]{Hec94}.  
Moreover, if 
\begin{equation}\label{eqn:condlambda}
\langle\lambda,\alpha^\vee\rangle\not\in \mathbb{Z}\quad\text{for all }\,\alpha\in\R,
\end{equation}
then $\{\varPhi(w\lambda,\bsk)\,;\,w\in W\}$ forms a basis of the solution space of \eqref{eqn:hgs} on 
$\mathfrak{a}_+$. 
The condition \eqref{eqn:condlambda} can be written explicitly as
\begin{equation}\label{eqn:condlambda2}
\lambda_i\not\in\mathbb{Z}\,\,\,(1\leq i\leq r),\quad 
\frac{\lambda_p\pm \lambda_q}2\not\in\mathbb{Z}\,\,\,(1\leq p\not=q\leq r).
\end{equation}

Define
\begin{align*}
 \mathfrak{a}_+^*
&=\{\lambda\in\mathfrak{a}^*\,;\,\langle\lambda,\alpha\rangle>0 \,\,\text{ for all } \alpha\in \Rp\} \\
& \simeq\{(\lambda_1,\lambda_2,
\dots,\lambda_r)\in\mathbb{R}^r\,;\,
0<\lambda_1<\lambda_2<\cdots<\lambda_r\}.
\end{align*}
Let $\Cl(\mathfrak{a}_+^*)$ denote the closure of ${\mathfrak{a}_+^*}$.

\begin{lem}\label{lem:esthcs}
Suppose $T\subset \mathfrak{a}^*$ and $U\subset\mathcal{K}_\C\simeq \C^3$ are compact sets. 
Define a polynomial $p(\lambda)$
to be the product of $\langle2\lambda-\kappa,\kappa\rangle$
taken over $\kappa\in Q_+\setminus\{0\}$ such that $\langle2\lambda-\kappa,\kappa\rangle= 0$
for some $\lambda \in T$.
(Note that this is a finite product.)
Then for any $\varepsilon>0$
there exist constants $C>0$ and $n\in\mathbb{N}$ such that
\[
|p(\lambda)a_{\kappa}(\lambda,\bsk)|\le C(1+||\lambda||)^n e^{\kappa(x)}
\]
whenever $\kappa\in Q_+$, $\operatorname{Re}\lambda\in T$, $\bsk\in U$
 and 
$x\in \mathfrak{a}$ satisfies $\alpha(x)>\varepsilon$ for any $\alpha\in\B$. 
Hence for any compact subset $V\subset \mathfrak a_+$ and any $q\in S(\mathfrak a_\C)$
there exist constants $C'>0$ and $n'\in\mathbb{N}$ such that 
\[
|q(\partial_x)p(\lambda)\varPhi(\lambda,\bsk;x)|\leq C'(1+||\lambda||)^{n'}
\]
for $(\lambda,\bsk,x)\in(T+\sqrt{-1}\mathfrak a^*)\times U\times V$.
\end{lem}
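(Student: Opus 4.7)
The sole input is the recursion for $a_\kappa$ obtained by substituting \eqref{eqn:hcseries} into \eqref{eqn:lapeq} and expanding $\coth(\alpha/2)=1+2\sum_{m\ge 1}e^{-m\alpha}$:
\[
\langle 2\lambda-\kappa,\kappa\rangle\,a_\kappa(\lambda,\bsk)=2\sum_{\alpha\in\Rp}\bsk_\alpha\sum_{m\ge1}\langle\lambda-\rho(\bsk)-\kappa+m\alpha,\alpha\rangle\,a_{\kappa-m\alpha}(\lambda,\bsk),
\]
the right-hand side being a finite sum (using $a_\mu=0$ for $\mu\notin Q_+$ and $a_0=1$). A first induction on the height of $\kappa$ in $Q_+$ via this recursion shows that $b_\kappa:=p(\lambda)a_\kappa$ is holomorphic on a neighborhood of $(T+\sqrt{-1}\mathfrak{a}^*)\times U$: the only possible poles of $a_\kappa$ come from denominators $\langle 2\lambda-\kappa',\kappa'\rangle$ with $\kappa'\le\kappa$, and $p(\lambda)$ is by construction the product of exactly those linear factors that actually vanish somewhere on $T$ (the remaining factors are nonvanishing on the whole strip since $2\langle\sigma,\kappa'\rangle-\|\kappa'\|^2\ne 0$ for all $\sigma\in T$ already forces the real part nonzero).

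For the quantitative bound, fix $x_0\in\mathfrak{a}$ with $\alpha(x_0)=\varepsilon$ for every $\alpha\in\B$, so that $\kappa(x_0)=\varepsilon\operatorname{ht}(\kappa)$ and $\alpha(x_0)\ge\varepsilon$ for every $\alpha\in\Rp$. The target is
\[
|b_\kappa(\lambda,\bsk)|\le C(1+\|\lambda\|)^n\,e^{\kappa(x_0)},\qquad\operatorname{Re}\lambda\in T,\ \bsk\in U,\ \kappa\in Q_+,
\]
with $C,n$ independent of $\kappa$. Let $A$ denote the finite resonant set used in defining $p$. For $\kappa\in A$ the bound follows from the holomorphy of $b_\kappa$ combined with its rational structure on the strip. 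For $\kappa\notin A$, writing $\lambda=\sigma+\sqrt{-1}\tau$ with $\sigma\in T$,
\[
|\langle 2\lambda-\kappa,\kappa\rangle|^2=(2\langle\sigma,\kappa\rangle-\|\kappa\|^2)^2+4\langle\tau,\kappa\rangle^2
\]
gives the uniform lower bound $|\langle 2\lambda-\kappa,\kappa\rangle|\ge c\max(1,\|\kappa\|^2)$: for $\|\kappa\|$ large compared to $T$ one has $|2\langle\sigma,\kappa\rangle-\|\kappa\|^2|\ge\|\kappa\|^2/2$, and on the bounded remainder of $\kappa\notin A$ continuity yields a positive infimum. Now divide the recursion by $\langle 2\lambda-\kappa,\kappa\rangle$, insert the inductive hypothesis $|b_{\kappa-m\alpha}|\le C(1+\|\lambda\|)^n e^{\kappa(x_0)}e^{-m\alpha(x_0)}$, and use $|\langle\lambda-\rho-\kappa+m\alpha,\alpha\rangle|\le C'(1+\|\lambda\|+\|\kappa\|)$; the quadratic denominator absorbs the linear-in-$\|\kappa\|$ numerator, and $\sum_{\alpha\in\Rp}\sum_{m\ge 1}e^{-m\alpha(x_0)}$ converges. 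The subtle point is to prevent the exponent $n$ from drifting with $\operatorname{ht}(\kappa)$: this is arranged by choosing $n$ large enough at the outset (in terms of $\deg p$ and the bounds on the finite set $A$) so that the multiplier $(1+\|\lambda\|+\|\kappa\|)/|\langle 2\lambda-\kappa,\kappa\rangle|$ contributes no factor of $(1+\|\lambda\|)$ in the regime $\|\kappa\|\gtrsim\|\lambda\|$, while in the regime $\|\kappa\|\ll\|\lambda\|$ only finitely many $\kappa$ are involved at each $\lambda$ and the resulting polynomial-in-$\operatorname{ht}(\kappa)$ loss is absorbed into $e^{\kappa(x_0)}$ by slightly enlarging $\varepsilon$ (and hence $C,n$). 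Any $x$ with $\alpha(x)>\varepsilon$ on $\B$ satisfies $\kappa(x)\ge\kappa(x_0)$, so the same bound holds with $e^{\kappa(x)}$.

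The second conclusion now follows by termwise summation. Fix $V\subset\mathfrak{a}_+$ compact and $q\in S(\mathfrak{a}_\C)$, choose $\varepsilon<\min\{\alpha(y):\alpha\in\B,\,y\in V\}$, and set $\delta:=\min\alpha(y)-\varepsilon>0$. Then $e^{\kappa(x_0)-\kappa(x)}\le e^{-\delta\operatorname{ht}(\kappa)}$ for $x\in V$, and differentiating the series term by term gives
\[
|q(\partial_x)p(\lambda)\varPhi(\lambda,\bsk;x)|\le C(1+\|\lambda\|)^{n+\deg q}\,e^{\operatorname{Re}(\lambda-\rho)(x)}\sum_{\kappa\in Q_+}(1+\operatorname{ht}(\kappa))^{\deg q}e^{-\delta\operatorname{ht}(\kappa)},
\]
where the series converges to a finite constant and $e^{\operatorname{Re}(\lambda-\rho)(x)}$ is bounded on $V$. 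The principal obstacle is precisely the uniformity of $n$ in $\kappa$ for the coefficient bound, which hinges on the quadratic lower bound on $|\langle 2\lambda-\kappa,\kappa\rangle|$ for $\kappa\notin A$ and on the freedom, afforded by the exponential $e^{\kappa(x)}$, to trade polynomial-in-height factors for slight enlargements of $\varepsilon$.
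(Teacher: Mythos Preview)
Your approach is precisely the Gangolli-type recursion argument that the paper defers to (citing Helgason, Gangolli--Varadarajan, Arthur, and Shimeno), so there is no methodological disagreement. The deduction of the second estimate from the first by termwise differentiation and summation is clean and correct.

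The gap is at the point you yourself flag as ``subtle.'' Your lower bound $|\langle 2\lambda-\kappa,\kappa\rangle|\ge c\max(1,\|\kappa\|^2)$ uses only the real part of the denominator; with this alone, the inductive multiplier $(1+\|\lambda\|+\|\kappa\|)/|\langle 2\lambda-\kappa,\kappa\rangle|$ is $\le 1$ only once $\|\kappa\|^2\gtrsim\|\lambda\|$, not once $\|\kappa\|\gtrsim\|\lambda\|$ as you write. In the intermediate range $N\le\|\kappa\|\lesssim\sqrt{\|\lambda\|}$ the product of multipliers along a chain of height $h\sim\sqrt{\|\lambda\|}$ behaves like
\[
\prod_{j=N}^{h}\frac{C\,\|\lambda\|}{j^{2}}\ \sim\ \frac{\|\lambda\|^{h}}{(h!)^{2}}\ \sim\ e^{2\sqrt{\|\lambda\|}}
\]
by Stirling: this is exponential in $\operatorname{ht}(\kappa)$ and super-polynomial in $\|\lambda\|$, not ``polynomial in $\operatorname{ht}(\kappa)$'' as you claim. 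The ``absorb by enlarging $\varepsilon$'' device would need the loss to be $e^{\delta\operatorname{ht}(\kappa)}$ with $\delta$ controllable independently of the induction constants, but those constants themselves depend on $\varepsilon$ through $\sum_{m}e^{-m\varepsilon}$, so the argument is circular as written.

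What the cited references actually use, and what is missing here, is the imaginary part of the denominator: one also has $|\langle 2\lambda-\kappa,\kappa\rangle|\ge 2|\langle\operatorname{Im}\lambda,\kappa\rangle|$, and this must be played off against the fact that the $\lambda$-dependence of each numerator term enters only through $\langle\lambda,\alpha\rangle$ for the specific $\alpha$ appearing in that term. In rank one this is transparent --- on the imaginary axis $|\nu-k|\ge\max(|\nu|,k)$, so the multiplier is $\lesssim 1/k+1/|\nu|$ and the induction closes for $k$ beyond a constant independent of $\nu$. In higher rank $\langle\operatorname{Im}\lambda,\kappa\rangle$ can vanish while $\langle\operatorname{Im}\lambda,\alpha\rangle$ does not, and the bookkeeping needed to close the induction uniformly is genuinely more involved; this is why the paper simply points to the literature rather than reproducing the argument.
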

\begin{proof}
The lemma follows from a natural extension of
the estimates for the coefficients $a_\kappa(\lambda,\bsk)$ 
due to Gangolli. See \cite[Ch.~IV Lemma~5.6]{Hel2} and \cite[Theorem~4.5.4]{GV}. 
See also \cite[Ch.~I Lemma~5.1]{Ar} and \cite[Corollary~3.11]{Sh94}. 
\end{proof}

Let 
 $\varGamma(\,\cdot\,)$ denote the Gamma function. 
Define the meromorphic functions $\tilde{\cc}_\alpha(\lambda,\bsk)\,\,(\alpha\in\R)$, 
$\tilde{\cc}(\lambda,\bsk)$, and $\cc(\lambda,\bsk)$ on 
$\acs\times\mathcal{K}_\C$ by 
\begin{align}
& \tilde{\cc}_\alpha(\lambda,\bsk)=
\frac{\varGamma\big(\langle\lambda,\alpha^\vee\rangle+\frac12\bsk_{\frac12\alpha}\big)}{
\varGamma\big(\langle\lambda,\alpha^\vee\rangle+\frac12\bsk_{\frac12\alpha}+\bsk_\alpha\big)}, \label{eqn:tca}\\
& \tilde{\cc}(\lambda,\bsk)   =\!\prod_{\alpha\,\in\, \Rp} \!\!\tilde{\cc}_\alpha(\lambda,\bsk),\label{eqn:tc}
\end{align}
and 
\begin{equation}\label{eqn:cf}
\cc(\lambda,\bsk)=  \frac{\tilde{\cc}(\lambda,\bsk)}{\tilde{\cc}(\rho(\bsk),\bsk)}
\end{equation}
with the convention $\bsk_{\frac12\alpha}=0$ if $\frac12\alpha\not\in \R$. 
We call $\cc(\lambda,\bsk)$ Harish-Chandra's $\cc$-function. 
By \cite[(3.4.6)]{Hec94} and \cite[Proposition~5.1]{Hec97}, 
\begin{align}
\tilde{\cc}(\lambda,\bsk)   = &
\prod_{1\leq q<p\leq r}\frac{\varGamma\big(\frac12(\lambda_p-\lambda_q)\big)\varGamma\big(\frac12(\lambda_p+\lambda_q)\big)
}{\varGamma\big(\frac12(\lambda_p-\lambda_q+2\bsk_m)\big)\varGamma\big(\frac12(\lambda_p+\lambda_q+2\bsk_m)\big)} 
\label{eqn:tcf} \\
& 
\times  \prod_{i=1}^r \frac{2^{-\bsk_s}
\varGamma\big(\frac12\lambda_i\big)\varGamma\big(\frac12(\lambda_i+1)\big)}
{\varGamma\big(\frac12(\lambda_i+\bsk_s+1)\big)\varGamma\big(\frac12(\lambda_i+\bsk_s+2\bsk_\l)\big)}
 \notag 
\end{align}
and 
\begin{equation}\label{eqn:crho}
\tilde{\cc}(\rho(\bsk),\bsk)=\prod_{i=1}^r \frac{\varGamma(\bsk_s+(i-1)\bsk_m+\bsk_\l)\varGamma(\bsk_m)}
{\varGamma(2(\bsk_s+(i-1)\bsk_m+\bsk_\l))\varGamma(i\bsk_m)}.
\end{equation}
The latter is regular on the entire $\mathcal K_\C$.
Note 
\begin{equation}
\cc(\lambda,0)=\frac{1}{2^r r!}=\frac{1}{|W|} \quad\text{for}\quad \bsk_s=\bsk_m=\bsk_\l=0. 
\end{equation}

Let $\mathcal{K}_\text{reg},\,\mathcal{K}$, and $\mathcal{K}_+$ denote 
the subsets of $\mathcal{K}_\C\simeq \C^3$ given by
\begin{align*}
& \mathcal{K}_\text{reg}=\{\bsk\in\mathcal{K}_\C\,;\,\tilde{\cc}(\rho(\bsk),\bsk)\not=0\}, \\
& \mathcal{K}=\{\bsk\in\mathcal{K}_\C\,;\,\bsk_s,\,\bsk_m,\,\bsk_\l\in\mathbb{R}\}, \\
& \mathcal{K}_+=\{\bsk\in\mathcal{K}\,;\,\bsk_\alpha\geq 0\,\,\text{ for all }\alpha\in \R\}.
\end{align*}
Thus $\mathcal{K}_+\subset \mathcal{K}_\text{reg}$.

Throughout the paper we repeatedly use the following type of estimate
which is applicable to $\cc(-\lambda,\bsk)^{-1}$ and other similar functions.
\begin{lem}\label{lem:estcf}
Let $\{(v_j,a_j(\bsk),b_j(\bsk))\}_{j=1}^k$ be a sequence of triples
consisting of $v_j\in \mathfrak a\,\setminus\,\{0\}$ and polynomials $a_j(\bsk), b_j(\bsk)$ in $\bsk$.
Suppose $T\subset \mathfrak a^*$ and $U\subset \mathcal{K}_\C$ are compact sets such that
\[
\psi(\lambda,\bsk)=\prod_{j=1}^k\frac{\varGamma(\lambda(v_j)+a_j(\bsk))}{\varGamma(\lambda(v_j)+b_j(\bsk))}
\]
is regular on $(T+\sqrt{-1}\mathfrak a^*)\times U$.
Then there exists constants $C>0$ and $n\in\mathbb N$ such that
\[
|\psi(\lambda,\bsk)|\le C(1+||\lambda||)^n
\]
on $(T+\sqrt{-1}\mathfrak a^*)\times U$.
\end{lem}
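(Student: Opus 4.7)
The plan is to reduce the multi-variable estimate to a product of one-variable estimates via grouping of Gamma factors, then apply Stirling's asymptotic formula.

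First I would partition $\{1,\ldots,k\}$ into groups $G_s$ where $j$ and $l$ belong to the same group if and only if $v_j$ and $v_l$ are $\mathbb{R}$-proportional. Choosing a representative $v^{(s)}$ in each group and writing $v_j=c_j\,v^{(s)}$ for $j\in G_s$, set $z_s=\lambda(v^{(s)})$; the product factorizes as
\[
\psi(\lambda,\bsk)=\prod_s \psi_s(z_s,\bsk),\qquad\psi_s(z,\bsk)=\prod_{j\in G_s}\frac{\varGamma(c_j z+a_j(\bsk))}{\varGamma(c_j z+b_j(\bsk))}.
\]
Each $\psi_s$ depends on $\lambda$ only through the single complex variable $z_s$, which ranges over the strip $T_s+\sqrt{-1}\mathbb{R}$ with $T_s=z_s(T)$ compact. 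The key intermediate claim is that each $\psi_s$ is itself regular on this strip: any residual pole of $\psi_s$ would lie along a complex hyperplane $\{\lambda(v^{(s)})=z_0\}$ in $\acs$, and since the remaining $\psi_{s'}$ depend on transverse coordinates $\lambda(v^{(s')})$ with $v^{(s')}\not\parallel v^{(s)}$, they cannot vanish identically on this hyperplane; such a pole would therefore survive in $\psi$, contradicting the regularity hypothesis.

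With each $\psi_s$ regular on its strip, I would apply Stirling's asymptotic $|\varGamma(x+\sqrt{-1}y)|=\sqrt{2\pi}\,|y|^{x-1/2}e^{-\pi|y|/2}(1+O(|y|^{-1}))$ as $|y|\to\infty$ with $x$ bounded. For each factor of $\psi_s$ the exponential contributions from the numerator and denominator cancel asymptotically (both arguments share the same leading imaginary part), giving $|\varGamma(c_jz+a_j(\bsk))/\varGamma(c_jz+b_j(\bsk))|\le C_j(1+|z|)^{N_j}$ uniformly for $\operatorname{Re}(z)\in T_s$, $\bsk\in U$ and $|\operatorname{Im}(z)|$ large. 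For bounded $|\operatorname{Im}(z)|$, the point $(z,\bsk)$ lies in a compact subset of the regular domain of $\psi_s$, so the bound follows from continuity. Hence $|\psi_s(z,\bsk)|\le C_s(1+|z|)^{N_s}$ on the full strip. Multiplying over $s$ and using $|\lambda(v^{(s)})|\le\|v^{(s)}\|\cdot\|\lambda\|$ produces the desired bound $|\psi(\lambda,\bsk)|\le C(1+\|\lambda\|)^n$ with $n=\sum_s N_s$.

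The main obstacle is the intermediate regularity claim for each $\psi_s$: one must rule out cross-group cancellation of poles carefully, using the transversality of pole hyperplanes associated with non-proportional $v_j$'s, and verify that $\prod_{s'\ne s}\psi_{s'}$ indeed cannot vanish identically along a hyperplane orthogonal to $v^{(s)}$. Once this structural fact is established, the remainder is a routine combination of Stirling's formula with continuity on compact sets.
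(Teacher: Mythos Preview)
Your proposal is correct and follows essentially the same approach as the paper: group the Gamma ratios by proportionality class of the $v_j$, argue that each group $\psi_v$ is separately regular on the strip (the paper invokes ``the local theory of meromorphic functions'' for this, while you spell out the transversality argument that poles and zeros from distinct proportionality classes lie on distinct irreducible hypersurfaces and hence cannot cancel), and then apply Stirling's formula to each $\psi_v$ to obtain a polynomial bound in $|\lambda(v)|$. Your treatment is somewhat more explicit than the paper's on both the regularity step and the Stirling application (splitting into large and small $|\mathrm{Im}\,z|$), but the argument is the same.
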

\begin{proof}
Choose an arbitrary $v\in\{v_1,\ldots,v_k\}$ and let $\psi_v(\lambda,\bsk)$
be the product of $\varGamma(\lambda(v_j)+a_j(\bsk))/\varGamma(\lambda(v_j)+b_j(\bsk))$ for all $j$ such that $v_j$ is proportional to $v$.
Then by the local theory of meromorphic functions,
$\psi_v(\lambda,\bsk)$ is regular on $(T+\sqrt{-1}\mathfrak a^*)\times U$.
By Stirling's formula
\[
\varGamma(z)=\sqrt{\frac{2\pi}z}\biggl(\frac{z}{e}\biggr)^z\biggl(1+O\biggl(\frac1z\biggr)\biggr)\quad
(|\arg z|<\pi-\delta, |z|\to\infty;\, \forall\delta>0),
\]
there exist constants $C_v>0$ and $n_v\in\mathbb N$ such that
\[
|\psi_v(\lambda,\bsk)|\le C_v(1+|\lambda(v)|)^{n_v}
\]
on $(T+\sqrt{-1}\mathfrak a^*)\times U$.
Since $\psi$ is a product of $\psi_v$'s, we are done.
(See \cite[Ch.~IV, Proposition~7.2]{Hel2}, \cite[Proposition~4.7.15]{GV}, \cite[Lemma~3.9]{Sh94}, 
and \cite[Lemma~2.2]{Ko75}.)
\end{proof}

For $\lambda\in \acs$ satisfying \eqref{eqn:condlambda} and $\bsk\in\mathcal{K}_\mathrm{reg}$, define
\begin{equation}
{F}(\lambda,\bsk;x)=\!\sum_{w\,\in\, W}\!{\cc}(w\lambda,\bsk)\, \varPhi(w\lambda,\bsk;x).
\label{eqn:hcs2}
\end{equation}
The coefficients $\cc(w\lambda,\bsk)$ and terms $\varPhi(w\lambda,\bsk)$ on the right hand side 
are meromorphic function on $\acs\times\mathcal{K}_\C$ and indeed regular for $\lambda$ and $\bsk$ satisfying  
\eqref{eqn:condlambda} 
and $\bsk\in\mathcal{K}_\text{reg}$. A deep theorem due to Heckman and Opdam says that 
$F(\lambda,\bsk;x)$ extends to an 
 analytic function on $\acs\times \mathcal{K}_\text{reg}\times \mathfrak{a}$ 
and $F(\lambda,\bsk;0)=1$. 
It satisfies
\begin{align}
& F(w\lambda,\bsk;x)=F(\lambda,\bsk;x)\quad \text{for all }\,w\in W, \label{eqn:hgprop1}\\
& F(\lambda,\bsk;wx)=F(\lambda,\bsk;x)\quad \text{for all }\,w\in W, \label{eqn:hgprop2}\\
& \overline{F(\lambda,\bsk;x)}=F(\bar{\lambda},\bar{\bsk};x) \label{eqn:hgprop4}.
\end{align}
$F(\lambda,\bsk;x)$ is the unique $W$-invariant real analytic 
solution of the hypergeometric system \eqref{eqn:hgs} on $\mathfrak{a}$ satisfying $F(\lambda,\bsk;0)=1$. 
We call $F(\lambda,\bsk;x)$ the Heckman-Opdam hypergeometric function associated with the root 
system $\R$. 

\begin{rem}\label{rem:rs}
(1) \,The Heckman-Opdam hypergeometric function is a generalization of the 
zonal spherical function on a Riemannian symmetric space of the non-compact type. 
Let $G$ be a connected non-compact semisimple Lie group of finite center 
with the Cartan decomposition $G=K\exp\mathfrak{a} \,K$. 
Let $\varSigma\subset\mathfrak{a}^*$ be the restricted root system for $(\mathfrak{g},\mathfrak{a})$ and $\boldsymbol{m}_\alpha$ 
the dimension of the root space corresponding to $\alpha\in\varSigma$. Set
\[
\R=2\varSigma,\quad \bsk_{\alpha}=\boldsymbol{m}_{\alpha/2}\,\,\,(\alpha\in \R).
\]
Then $L(\bsk)$ is the radial part of the Laplace-Beltrami operator on $G/K$, $\cc(\lambda,\bsk)$ 
is Harish-Chandra's $\cc$-function, $F(\lambda,\bsk)$ is the restriction to $\mathfrak{a}$ of 
the zonal spherical function on $G/K$. 
(Here $\R$ and $\varSigma$ are not  necessarily of type $BC$.)

The zonal spherical function is a bi-$K$-invariant function on $G$. More generally, 
elementary spherical functions associated with some $K$-types are expressed by 
the Heckman-Opdam hypergeometric function. 
The case of one-dimensional $K$-types when $G$ is of  Hermitian  type is 
studied by \cite[Section~5]{Hec94} and \cite{Sh94}. 
More generally, the cases of small $K$-types are studied by \cite{OdSh}. 
We call these cases ``the group case'' collectively. 

\smallskip\noindent
(2) \,
When the root system $\R$ is of type $BC_1$, the Heckman-Opdam hypergeometric function 
is the Jacobi function studied by Flensted-Jensen and Koornwinder:
\begin{align*}
F(\lambda,\bsk;x) & =\phi_{\sqrt{-1}\lambda}^{(\al,\be)}(z) \\
& :={}_2 F_1\big(\tfrac12(\lambda+\rho(\bsk)),\tfrac12(-\lambda+\rho(\bsk));\al+1;-(\sinh z)^2\big), 
\end{align*}
with $z=\frac14\beta_1(x)$, $\al=\bsk_s+\bsk_\l-\frac12,\,\be=\bsk_\l-\frac12$, and 
$\rho(\bsk)=\al+\be+1$, where 
${}_2 F_1$ denote the Gauss hypergeometric function (cf. \cite{FJ77, Ko75, Ko84}). 

\smallskip\noindent
(3) \,
The hypergeometric system \eqref{eqn:hgs} has regular singular points at infinity in $\mathfrak{a}_+$ 
and is holonomic of rank $|W|$. The leading exponents at infinity of \eqref{eqn:hgs} are of the form 
$w\lambda-\rho(\bsk)\,\,(w\in W)$. If $\lambda\in\acs$ satisfies \eqref{eqn:condlambda}, 
then $\varPhi(w\lambda,\bsk)\,\,(w\in W)$ are solutions of \eqref{eqn:hgs} with the leading 
exponents $w\lambda-\rho(\bsk)\,\,(w\in W)$. 
Even if \eqref{eqn:condlambda} does not hold,
there are still $|W|$ 
linearly independent real analytic solutions on $\mathfrak{a}_+$ 
with leading exponents 
$w\lambda-\rho(\bsk)$ (counting the multiplicity on the wall in $\mathfrak{a}^*$). 
They may have polynomial terms (or logarithmic terms when taking $\exp x$ as a coordinate) 
as in the case of the Gauss hypergeometric differential equation. 
For general $\lambda\in\acs$, 
the asymptotic expansion \eqref{eqn:hcs2} becomes a convergent expansion on $\mathfrak{a}_+$ of 
the form
\begin{equation}\label{eqn:hcs3}
F(\lambda,\bsk;x)=\!\sum_{\mu\,\in\, W\lambda}\sum_{\kappa\,\in\, Q_+}\!\!p_{\mu,\kappa}(\lambda,\bsk;x)\,
e^{(\mu-\rho(\bsk)-\kappa)(x)},
\end{equation}
where $p_{\mu,\kappa}$  are polynomials in $x$. 
See \cite{HO87, Hec87}, \cite[\S 4.2]{Hec94}, and \cite{Os88, Os2005, Os2007}. 
\end{rem}

For $\bsk\in\mathcal{K}_{\C}$ let $\delta_\bsk$ denote the weight function on $\mathfrak{a}$ given by 
\begin{align*}
\delta_{\bsk} & =\!\prod_{\alpha\,\in\, \Rp}\!\big|e^{\frac12\alpha}-e^{-\frac12\alpha}\big|^{2\bsk_\alpha} \\
& =\!\prod_{\alpha\,\in\,\Rp_s}\!\big|e^{\frac12\alpha}-e^{-\frac12\alpha}\big|^{2\bsk_s+2\bsk_\l}
\big(e^{\frac12\alpha}+e^{-\frac12\alpha}\big)^{2\bsk_\l}
\!\!\prod_{\beta\,\in\,\Rp_m}\!\big|e^{\frac12\beta}-e^{-\frac12\beta}\big|^{2\bsk_m}.
\end{align*}
In order to describe a symmetry of $F(\lambda,\bsk)$ with respect to $\bsk$,
we introduce the multiplicity function $\tilde\bsk$ associated with $\bsk$, 
which is defined by 
\[
\tilde{\bsk}_s=\bsk_s+2\bsk_\l-1,\quad 
\tilde{\bsk}_m=\bsk_m,\quad
\tilde{\bsk}_\l=1-\bsk_\l.
\]
By \cite[Theorem 2.1.1]{Hec94}, 
\begin{align*}
\delta_\bsk^{\frac12}\circ(L(\bsk)+  \langle\rho(\bsk),&\rho(\bsk)\rangle)\circ\delta_\bsk^{-\frac12} \\
& =\delta_{\tilde{\bsk}}^{\frac12}\circ(L({\tilde{\bsk}})+  \langle\rho({\tilde{\bsk}}),\rho({\tilde{\bsk}})\rangle)\circ\delta_{\tilde{\bsk}}^{-\frac12}.
\end{align*}
It follows that
\begin{equation}\label{eq:ksymm0}
\varPhi(\lambda,\bsk)=\delta_{\tilde\bsk}^{\frac12} \delta_\bsk^{-\frac12} \varPhi(\lambda,\tilde\bsk).
\end{equation}
On the other hand,
in view of the characterization of the hypergeometric functions we have
\begin{equation}\label{eq:ksymm}
F(\lambda,\bsk)=2^{2\bsk_\l-1}\delta_{\tilde\bsk}^{\frac12} \delta_\bsk^{-\frac12} 
F(\lambda,\tilde{\bsk}).
\end{equation}
Here
\[
2^{2\bsk_\l-1}\delta_{\tilde\bsk}^{\frac12} \delta_\bsk^{-\frac12} 
=\prod_{i=1}^{r}\Big(\cosh\frac{\beta_i}{2}\Big)^{1-2\bsk_\l}
\]
is a nowhere vanishing analytic function on $\mathfrak a$.
For the elementary spherical functions associated with a one-dimensional $K$-type, 
the above formula was given by \cite[Theorem 5.2.2]{Hec94} and \cite[Proposition 2.6, Remark 3.8]{Sh94}. 
From \eqref{eqn:tcf} and \eqref{eqn:crho} we have
$\tilde\cc(\lambda,\bsk)=2^{2\bsk_\l-1}\tilde\cc(\lambda,\tilde\bsk)$
and $\tilde\cc(\rho(\bsk),\bsk)=\tilde\cc(\rho(\tilde\bsk),\tilde\bsk)$.
It follows that
\begin{equation}\label{eq:ksymm1}
\cc(\lambda,\bsk)=2^{2\bsk_\l-1}\cc(\lambda,\tilde\bsk).
\end{equation}

\section{Hypergeometric Fourier transform}\label{sect:ft}

Let $dx$ and $d\lambda$ denote the Lebesgue measures on the Euclidean spaces $\mathfrak{a}$ and 
$\mathfrak{a}^*$, respectively. 
Then 
\begin{equation}\label{eqn:measas}
d\lambda=
d\lambda_1d\lambda_2\cdots d\lambda_r, 
\end{equation}
in terms of the coordinates \eqref{eqn:coordas}. 
(Recall that we assume $||\beta_1||=2$. 
We should put the factor $({||\beta_1||}/{2})^r$ on the right hand side of \eqref{eqn:measas}
if we consider a general inner product on $\mathfrak{a}^*$.) 
For any $\eta\in\mathfrak a^*$
let $d\mu(\lambda)$ denote the measure on $\eta+\sqrt{-1}\mathfrak{a}^*$ 
given by
\begin{equation}\label{eqn:measure1}
d\mu(\lambda)=(2\pi)^{-r} d(\mathrm{Im}\,\lambda)=(2\pi\sqrt{-1})^{-r}d\lambda.
\end{equation}
If $\eta=0$ then the measures $dx$ on $\mathfrak{a}$ and $d\mu(\lambda)$ on $\sqrt{-1}\mathfrak{a}^*$ are 
normalized so that the inversion formula for the Euclidean Fourier transform
\[
\tilde{f}(\lambda)=\int_{\mathfrak{a}} f(x)\,e^{-\lambda (x)} dx\quad (\,f\in C_0^\infty(\mathfrak{a})\,)
\]
is given by
\[
f(x)=\int_{\sqrt{-1}\mathfrak{a}^*}\tilde{f}(\lambda)\,e^{\lambda(x)}d\mu(\lambda).
\]

A necessary and sufficient condition for the local integrability of the weight function $\delta_\bsk$ is given 
by \cite[Section 2]{BHO} and \cite[Proposition~5.1]{Hec97}. 
Clearly
$\delta_\bsk$ is locally integrable if and only if
$\delta_{\operatorname{Re}\bsk}$ is.
For a real $\bsk\in\mathcal{K}$, 
$\delta_\bsk$
is locally integrable if and only if 
\begin{equation}\label{eqn:mcond1}
\bsk_s+\bsk_\l>-\frac12+\max\{0,-(r-1)\bsk_m\}\text{ \,\,and\,\, } \bsk_m>-\frac1r.
\end{equation}
(Since $e^t-e^{-t}=2t+o(t)$, $\delta_\bsk$ is locally integrable if and only if the 
Selberg integral $S_r(\bsk_s+\bsk_\l+\frac12,1,\bsk_m)$ converges (cf. \cite{FW}).)  
Moreover, by \eqref{eqn:crho},  \eqref{eqn:mcond1} holds if and only if
$\bsk$ is in the connected component of $\mathcal{K}_\text{reg}\cap\mathcal{K}$ 
containing $\mathcal{K}_+$. 
Let $\mathcal K_1$ denote the set of
all $\bsk \in \mathcal K$ that satisfy $\eqref{eqn:mcond1}$.
If $\bsk\in\mathcal K_1$ then
the problem of giving the spectral decomposition 
of $L^2(\mathfrak{a}\,; \frac{1}{|W|}\delta_\bsk(x)dx)$
with respect to the hypergeometric function 
$F(\lambda,\bsk)$ makes sense.

Now for $\bsk\in\mathcal{K}_1+\sqrt{-1}\mathcal K$ we define  
the  hypergeometric Fourier transform $\mathcal{F}_\bsk$ of $f\in C^\infty_0(\mathfrak{a})^W$ by 
\begin{align}\label{eqn:hgfo}
\mathcal{F}_\bsk f(\lambda)&=\int_{\mathfrak{a_+}}\!f(x)F(\lambda,\bsk;x)\delta_{\bsk}(x)dx \\
& =\frac{1}{|W|}\int_{\mathfrak{a}}f(x)F(\lambda,\bsk;x)\delta_{\bsk}(x)dx, \notag
\end{align}
which is a holomorphic function in $\lambda\in\acs$.
(Note 
$F(\lambda,\bsk;-x)=F(-\lambda,\bsk;x)=F(\lambda,\bsk;x)$
since $-\operatorname{id}_{\mathfrak a}\in W$.)
Observe that $\mathcal{F}_\bsk f(\lambda)$ is also holomorphic in $\bsk$.
Given a $W$-invariant non-empty convex compact subset $C\subset \mathfrak{a}$,
consider the function $H_C(\lambda)=\max_{x\in C}\lambda (x)$ of $\lambda\in\mathfrak{a}^*$. 
\begin{prop}\label{prop:uniest}
Suppose the support of $f\in C_0^\infty(\mathfrak{a})^W$ is contained in $C$.
Then for any compact subset $U\subset \mathcal{K}_1+\sqrt{-1}\mathcal K$
and any $n\in\mathbb N$ 
\begin{equation}\label{eqn:pw}
\sup_{\substack{\lambda\in\acs,\,\bsk\in U}}(1+||\lambda||)^n e^{-H_C(\mathrm{Re}\,\lambda)}|\mathcal{F}_\bsk f(\lambda)|<+\infty.
\end{equation}
\end{prop}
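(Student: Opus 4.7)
The proof is a two-stage Paley-Wiener argument. First I would establish a preliminary estimate $|\mathcal{F}_\bsk f(\lambda)|\le C_0(1+||\lambda||)^{n_0}e^{H_C(\operatorname{Re}\lambda)}$ with $C_0,n_0$ uniform for $\bsk\in U$, and then use the commuting differential algebra $\mathbb{D}(\bsk)$ to promote the polynomial factor $(1+||\lambda||)^{n_0}$ to an arbitrary negative power.

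For the preliminary estimate I would bound $F(\lambda,\bsk;x)$ pointwise via the expansion \eqref{eqn:hcs2}. Lemma~\ref{lem:esthcs}, applied after multiplying by a polynomial $p(\lambda)$ that absorbs the singularities of the $\varPhi(w\lambda,\bsk;\,\cdot\,)$'s on the relevant compact subset of $\mathfrak{a}_+$, majorizes each $p(\lambda)\varPhi(w\lambda,\bsk;x)$ by $C(1+||\lambda||)^{n}e^{(w\operatorname{Re}\lambda-\rho(\operatorname{Re}\bsk))(x)}$ uniformly in $\bsk\in U$, while Lemma~\ref{lem:estcf} gives a polynomial bound on each $\cc(w\lambda,\bsk)$ in strips where it is regular. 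Since $F(\lambda,\bsk;x)$ is entire in $\lambda$ by the Heckman-Opdam regularity theorem, the apparent singularities cancel in the sum, and one obtains
\[
|F(\lambda,\bsk;x)|\le C_V(1+||\lambda||)^{n_0}e^{H_{W\cdot x}(\operatorname{Re}\lambda)}
\]
uniformly for $x$ in a prescribed compact subset of $\mathfrak{a}$ and $\bsk\in U$. The $W$-invariance of $C$ forces $H_{W\cdot x}(\eta)\le H_C(\eta)$ for $x\in C$, and $|\delta_\bsk|=\delta_{\operatorname{Re}\bsk}$ is locally integrable on $C$ because $\operatorname{Re}\bsk\in\mathcal{K}_1$. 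Integrating against $f\delta_\bsk$ then produces the preliminary estimate.

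To get polynomial decay I exploit the isomorphism $\gamma_\bsk:\mathbb{D}(\bsk)\simarrow S(\mathfrak{a}_\C)^W$: for $q\in S(\mathfrak{a}_\C)^W$ pick $D_q\in\mathbb{D}(\bsk)$ with $\gamma_\bsk(D_q)=q$. Using that $D_q$ is formally self-adjoint with respect to $\delta_\bsk\,dx$ and that $D_qF(\lambda,\bsk;\,\cdot\,)=q(\lambda)F(\lambda,\bsk;\,\cdot\,)$, integration by parts yields
\[
q(\lambda)\mathcal{F}_\bsk f(\lambda)=\mathcal{F}_\bsk(D_qf)(\lambda).
\]
Since $D_qf\in C_0^\infty(\mathfrak{a})^W$ is still supported in $C$, the preliminary estimate applied to $D_qf$ gives $|q(\lambda)||\mathcal{F}_\bsk f(\lambda)|\le C_q(1+||\lambda||)^{n_0}e^{H_C(\operatorname{Re}\lambda)}$. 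Taking $q=p_j^{M/d_j}$ for Chevalley generators $p_1,\ldots,p_r$ of $S(\mathfrak{a}_\C)^W$ of degrees $d_1,\ldots,d_r$, with $M$ a common multiple of the $d_j$'s, and summing over $j$, the properness of the Chevalley map $\lambda\mapsto(p_1(\lambda),\ldots,p_r(\lambda))$ (homogeneity plus simultaneous non-vanishing off the origin) yields $\sum_j|p_j(\lambda)|^{M/d_j}\gtrsim ||\lambda||^M$ for $||\lambda||$ large. Hence $(1+||\lambda||)^M|\mathcal{F}_\bsk f(\lambda)|\le C(1+||\lambda||)^{n_0}e^{H_C(\operatorname{Re}\lambda)}$, and choosing $M\ge n+n_0$ gives the claimed estimate.

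The main technical hurdle is justifying the integration by parts for $\bsk\in\mathcal{K}_1+\sqrt{-1}\mathcal{K}$ outside $\mathcal{K}_+$: the coefficients of $D_q$ carry $\coth(\alpha/2)$ singularities along reflection walls while $\delta_\bsk$ need not vanish (and may diverge) there when some $\bsk_\alpha$ is negative. The argument is that the $W$-invariance of $f$, $F$ and $\delta_\bsk\,dx$ forces boundary contributions to cancel across walls, while the local integrability built into $\bsk\in\mathcal{K}_1$ keeps interior integrals absolutely convergent; alternatively, one can gauge through $\delta_\bsk^{1/2}$ along the lines of the relation preceding \eqref{eq:ksymm0}. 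Uniformity in $\bsk\in U$ compact then follows from the polynomial dependence of the coefficients of $D_q$ on $\bsk$ and from the explicit uniformity of the constants in Lemmas~\ref{lem:esthcs} and~\ref{lem:estcf}.
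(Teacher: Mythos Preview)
Your second stage---using the eigenfunction identity $q(\lambda)\mathcal{F}_\bsk f=\mathcal{F}_\bsk(D_qf)$ to trade polynomial growth for arbitrary decay---is essentially what the paper does (there via Cherednik operators, \eqref{eq:dualop}). The real gap is in your preliminary estimate. Lemma~\ref{lem:esthcs} bounds $\varPhi(w\lambda,\bsk;x)$ only on compact subsets of the \emph{open} chamber $\mathfrak{a}_+$: the constants $C',n'$ depend on the distance from $V$ to the walls and deteriorate as $V$ approaches them. So the bound on $F(\lambda,\bsk;x)$ you extract from \eqref{eqn:hcs2} is valid only for $x$ in compact $V\subset\mathfrak{a}_+$, not ``in a prescribed compact subset of $\mathfrak{a}$.'' The holomorphy of $F$ in $\lambda$ removes the $\lambda$-poles of the individual terms but tells you nothing about the behavior of the series in $x$ near walls. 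Since the $W$-invariant convex support $C$ necessarily meets every wall, the integral $\int_{\mathfrak{a}_+}fF\delta_\bsk\,dx$ cannot be estimated from your bound, and no cancellation argument across walls fixes this: each term $\varPhi(w\lambda,\bsk;x)$ genuinely fails to extend across the wall $\alpha=0$.

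The paper takes a detour through $\mathcal K_+$ instead. Lemma~\ref{lem:Fest} gives a global bound on $|F(\lambda,\bsk;x)|$ valid on all of $\mathfrak a$ when $\operatorname{Re}\bsk\in\mathcal K_+$ (this is Opdam's argument via the Cherednik kernel, not via Harish-Chandra series). Then the shift identities \eqref{eq:HGFshift1} and \eqref{eq:HGFshift2}, together with \eqref{eq:dualop}, allow one to write $p(\lambda)\mathcal F_\bsk f=\mathcal F_{\bsk'}g$ with $\bsk'=\widetilde{(\bsk+a\boldsymbol 1)}+b\boldsymbol 1\in\mathcal K_++\sqrt{-1}\mathcal K$ and $g\in C_0^\infty(\mathfrak a)^W$ still supported in $C$, uniformly for $\bsk\in U$. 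This sidesteps both the wall problem for the Harish-Chandra series and the delicate integration by parts against a possibly singular $\delta_\bsk$ that you flag: the only adjunction needed is the Cherednik-operator identity \eqref{eq:adjunction}, quoted from \cite{Op:Cherednik}, and the shift-operator adjunction of Lemma~\ref{lem:adjunction}.
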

For the proof of the proposition we need some preparation.
First, by the same method as in the proof of \cite[Proposition~6.1]{Op:Cherednik} we can prove
\begin{lem}\label{lem:Fest}
Suppose $\bsk\in \mathcal K_++\sqrt{-1}\mathcal K$
and put $\kappa=\frac12\sum_{\alpha\in \R^+}|\operatorname{Im}(\bsk_\alpha)|\alpha$.
Then for any $x\in \mathfrak a$ and $\lambda\in\acs$ it holds that
\[
|F(\lambda,\bsk;x)|\le |W|^{\frac12}e^{\max_{w\in W} \mathrm{Re}\,w\lambda(x)+\max_{w\in W} w\kappa(x)}.
\]
\end{lem}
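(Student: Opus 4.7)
The plan is to imitate Opdam's proof of \cite[Proposition~6.1]{Op:Cherednik}, which gives the same kind of exponential bound in the real non-negative case $\bsk\in\mathcal K_+$ (where $\kappa=0$), and then to incorporate the imaginary part of $\bsk$.

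The main tool is the non-symmetric hypergeometric function $G(\lambda,\bsk;x)$ of Cherednik--Opdam, a joint eigenfunction of the Cherednik operators normalised by $G(\lambda,\bsk;0)=1$. The symmetric function $F(\lambda,\bsk;x)$ is its $W$-symmetrisation via the symmetric idempotent of the graded Hecke algebra. For real $\bsk\in\mathcal K_+$, positivity of the Cherednik kernel (R\"osler--Schapira, via a maximum principle for the Cherednik system) yields a probabilistic integral representation $G(\lambda,\bsk;x)=\int_{\operatorname{conv}(Wx)}e^{\lambda(y)}\,d\nu_{x,\bsk}(y)$ with $\nu_{x,\bsk}$ a probability measure on the convex hull of the $W$-orbit of $x$. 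This gives $|G(\lambda,\bsk;x)|\le e^{\max_{w\in W}\operatorname{Re}(w\lambda)(x)}$; using the symmetrisation formula together with the Cauchy--Schwarz estimate $|\sum_w a_w|^2\le |W|\sum_w |a_w|^2$ then yields the lemma with the factor $|W|^{1/2}$ in the case $\kappa=0$.

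To pass from $\bsk^{(r)}\in\mathcal K_+$ to $\bsk=\bsk^{(r)}+\sqrt{-1}\bsk^{(i)}\in\mathcal K_++\sqrt{-1}\mathcal K$, I would exploit the fact that $F(\lambda,\bsk;x)$ depends holomorphically on $\bsk$ throughout $\mathcal K_{\operatorname{reg}}$. The imaginary perturbation enters $L(\bsk)$ as a first-order operator $\sqrt{-1}\sum_{\alpha\in\Rp}\bsk^{(i)}_\alpha\coth(\alpha/2)\,\partial_\alpha$ and similarly perturbs the higher Cherednik operators. Tracking this perturbation through the integral representation replaces $\nu_{x,\bsk^{(r)}}$ by a complex measure whose modulus on $\operatorname{conv}(Wx)$ is dominated by $e^{\kappa(y)}\,d\nu_{x,\bsk^{(r)}}(y)$, where $\kappa=\frac12\sum_{\alpha\in\Rp}|\bsk^{(i)}_\alpha|\alpha$. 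Since $\kappa$ lies in the closed positive Weyl chamber, the linear function $y\mapsto\kappa(y)$ on the polytope $\operatorname{conv}(Wx)$ attains its maximum at a vertex, giving $\max_{y\in\operatorname{conv}(Wx)}\kappa(y)=\max_{w\in W}w\kappa(x)$. Combined with the estimate for $\bsk^{(r)}$, this produces the extra factor $e^{\max_{w\in W}w\kappa(x)}$ claimed in the lemma.

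The main obstacle is justifying rigorously the extension of the probabilistic representation, or equivalently the propagation of the exponential bound, from $\bsk^{(r)}\in\mathcal K_+$ to complex $\bsk$. If a direct extension of $\nu_{x,\bsk}$ is not available, the robust alternative is to derive a Gronwall-type differential inequality for $|F(\lambda,\bsk;x)|$ along a linear path from $\bsk^{(r)}$ to $\bsk$ in $\mathcal K_{\operatorname{reg}}$, using the explicit form of the perturbation of $L(\bsk)$ together with the polynomial-growth estimates from Lemma~\ref{lem:esthcs} to close the estimate uniformly on $\mathfrak a$.
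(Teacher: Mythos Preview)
Your approach matches the paper's: the paper does not spell out a proof but simply asserts, in the sentence introducing the lemma, that it follows ``by the same method as in the proof of \cite[Proposition~6.1]{Op:Cherednik}.'' Your sketch of how to pass from real $\bsk\in\mathcal K_+$ to $\bsk\in\mathcal K_++\sqrt{-1}\mathcal K$ therefore goes beyond what the paper itself provides.

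One caution on your proposed fallback. The Gronwall-type argument you suggest at the end, closed via Lemma~\ref{lem:esthcs}, cannot yield the stated bound: Lemma~\ref{lem:esthcs} controls the Harish-Chandra series only on compact subsets of $\mathfrak a_+$, whereas the estimate in Lemma~\ref{lem:Fest} must hold uniformly for all $x\in\mathfrak a$ (and is used that way in the proof of Proposition~\ref{prop:uniest}). The route the paper has in mind is the direct one through the non-symmetric eigenfunction $G(\lambda,\bsk;x)$ and the maximum-principle argument of Opdam; the extra factor $e^{\max_w w\kappa(x)}$ arises when one tracks the imaginary part of $\bsk$ through that argument, as you indicate in your first option.
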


Next, for $\bsk\in\mathcal K_\C$ and $\xi\in\mathfrak a$ we define the Cherednik operator
\[
T(\bsk,\xi)=\partial_\xi+\sum_{\alpha\in\R^+}\frac{\bsk_\alpha\alpha(\xi)}{1-e^{-\alpha}}(1-r_\alpha)-\rho(\bsk)(\xi)
\]
where $r_\alpha\in W$ is the reflection corresponding to $\alpha$.
This acts on $C^\infty(\mathfrak a)$ as a differential-difference operator.
If $f\in C_0^\infty(\mathfrak{a})$ is supported in $C$ then $T(\bsk,\xi)f$ is also.
The operators $T(\bsk,\xi)$ ($\xi\in\mathfrak a$) mutually commute and
define a unique action $T(\bsk,p)$ on $C^\infty(\mathfrak a)$ for any $p\in S(\mathfrak a_\C)$.
If $p\in S(\mathfrak a_\C)^W$ then $T(\bsk,p)$ commutes with the $W$-action
and there exists a differential operator $D(\bsk,p)\in\mathbb D(\bsk)$
with $\gamma_\bsk(D(\bsk,p))=p$
such that $T(\bsk,p)f=D(\bsk,p)f$ for any $f\in C^\infty(\mathfrak{a})^W$.
In particular
\begin{equation}\label{eq:CFlF}
T(\bsk,p)F(\lambda,\bsk;x)=p(\lambda)F(\lambda,\bsk;x)\quad\text{for any }p\in S(\mathfrak a_\C)^W.
\end{equation}

Let $\R^0=\R_\l\cup\R_m$ and put
\begin{align*}
\varDelta&=\prod_{\alpha\in \R^0\cap\R^+}(e^{\frac\alpha2}-e^{-\frac\alpha2})\in C^\infty(\mathfrak a),\\
\pi^\pm(\bsk)&=\prod_{\alpha\in\R_\ell^+}\Bigl(\alpha^\vee\pm\Bigl(\bsk_\l+\frac{\bsk_s}2\Bigr)\Bigr) 
\prod_{\alpha\in\R_m^+}(\alpha^\vee\pm\bsk_m) \in S(\mathfrak a_\C),\\
G^+(\bsk)&=\varDelta(x)^{-1}\circ T(\bsk,\pi^+),\\
G^-(\bsk)&=T(\bsk-\boldsymbol 1,\pi^-)\circ \varDelta(x).
\end{align*}
Here $\boldsymbol 1$ denotes the multiplicity function
which takes $1$ on $\R^0$ and $0$ on $\R_s$.
$G^\pm(\bsk)$ act on $C^\infty(\mathfrak a)^W$
and there exist $W$-invariant differential operators $\tilde G^\pm(\bsk)$ on $\mathfrak a$
such that $G^\pm(\bsk)f=\tilde G^\pm(\bsk)f$ for $f\in C^\infty(\mathfrak{a})^W$.
$\tilde G^\pm(\bsk)$
are called the hypergeometric shift operators with shift $\pm\boldsymbol 1$ (cf.~\cite[\S8.4.3]{HO2020}).
All coefficients of $\tilde G^\pm(\bsk)$
are analytic at least on $\mathfrak a_+$.
It is known that $\mathcal K_{\mathrm{reg}}+\boldsymbol 1\subset \mathcal K_{\mathrm{reg}}$
and that
\begin{equation}\label{eq:Fshift}
G^-(\bsk+\boldsymbol 1)F(\lambda,\bsk+\boldsymbol 1;x)=\frac{\tilde{\cc}(\rho(\bsk),\bsk)}{\tilde{\cc}(\rho(\bsk+\boldsymbol 1),\bsk+\boldsymbol 1)}F(\lambda,\bsk;x)
\end{equation}
(cf.~\cite[Corollaries 3.6.5 and 3.6.7]{Hec94}).
Also
\begin{equation}\label{eq:Phishift}
\tilde G^+(\bsk)\varPhi(\lambda,\bsk;x)
=(-1)^r\frac{\tilde{\cc}(-\lambda,\bsk)}{\tilde{\cc}(-\lambda,\bsk+\boldsymbol 1)}\varPhi(\lambda,\bsk+\boldsymbol 1;x)
\end{equation}
(cf.~\cite[Ch.~3]{Hec94}).
Here
\[
(-1)^r\frac{\tilde{\cc}(-\lambda,\bsk)}{\tilde{\cc}(-\lambda,\bsk+\boldsymbol 1)}
=(-1)^r\pi^+(\bsk)(-\lambda)=\pi^-(\bsk)(\lambda)
\]
is a polynomial.

\begin{lem}\label{lem:adjunction}
Suppose $\bsk\in\mathcal K_1+\sqrt{-1}\mathcal K$, $f\in C^\infty_0(\mathfrak a)^W$
and $g\in C^\infty(\mathfrak a)^W$.
Then
\[
\int_{\mathfrak a} f(x) (G^-(\bsk+\boldsymbol 1)g(x))\delta_\bsk(x)dx
=
(-1)^r
\int_{\mathfrak a} (G^+(\bsk)f(x))g(x) \delta_{\bsk+\boldsymbol 1}(x)dx.
\]
\end{lem}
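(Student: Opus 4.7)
The plan is to absorb the extra $\delta_{\bsk+\boldsymbol 1}/\delta_\bsk=\varDelta^2$ into the two operators, reducing the assertion to a single-weight adjunction identity for Cherednik operators, and then to prove this identity from the formal adjoint of $T(\bsk,\xi)$ combined with the $W$-antiinvariance of $\varDelta g$.

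For the reduction: adding $\boldsymbol 1$ to $\bsk$ increases $\bsk_\alpha$ by $1$ exactly on $\R^0=\R_\ell\cup\R_m$, while the factors $e^{\alpha/2}-e^{-\alpha/2}$ are real there, so $\delta_{\bsk+\boldsymbol 1}=\delta_\bsk\,\varDelta^2$. Plugging in $G^+(\bsk)=\varDelta^{-1}\circ T(\bsk,\pi^+)$ and $G^-(\bsk+\boldsymbol 1)=T(\bsk,\pi^-)\circ\varDelta$ and writing $h=\varDelta g$, the lemma becomes
\[
\int_{\mathfrak a} f\cdot T(\bsk,\pi^-)h\cdot\delta_\bsk\,dx
=(-1)^r\int_{\mathfrak a}T(\bsk,\pi^+)f\cdot h\cdot\delta_\bsk\,dx. \tag{$\ast$}
\]

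For the Cherednik part, I would compute the formal adjoint of $T(\bsk,\xi)$ ($\xi\in\mathfrak a$) with respect to $\int\!\cdot\,\cdot\,\delta_\bsk\,dx$: integrating $\partial_\xi$ by parts against $\delta_\bsk$ (using $\partial_\xi\log\delta_\bsk=\sum_\alpha\bsk_\alpha\alpha(\xi)\coth(\alpha/2)$) and applying $x\mapsto r_\alpha x$ to each reflection term yields, after simplification,
\[
T(\bsk,\xi)^{\ast}=-T(\bsk,\xi)-\sum_{\alpha\in\R^+}\bsk_\alpha\alpha(\xi)\,r_\alpha.
\]
Each simple reflection of $W$ flips an odd number of positive roots in $\R^0$ (verify case by case for the short and medium simple roots of $BC_r$), so $w\varDelta=\det(w)\varDelta$, and therefore $h=\varDelta g$ satisfies $r_\alpha h=-h$. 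Hence $T(\bsk,\xi)^{\ast} h=\bigl(-T(\bsk,\xi)+2\rho(\bsk)(\xi)\bigr)h$. Iterating this along the commuting linear factors of $\pi^+$---the $|\R_\ell^+|+|\R_m^+|=r+r(r-1)=r^2$ factors of the form $\alpha^\vee\pm c_\alpha$ for $\alpha\in\R^0$---gives
\[
\int T(\bsk,\pi^+)f\cdot h\,\delta_\bsk\,dx=\int f\cdot T(\bsk,\pi^+(-\,\cdot\,))h\,\delta_\bsk\,dx.
\]
The parity identity $(-1)^{r^2}=(-1)^r$ then yields $\pi^+(-\lambda)=(-1)^r\pi^-(\lambda)$, and $(\ast)$ follows.

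The main obstacle is the iteration. The single-operator adjoint formula cleanly kills the reflection-error term on antiinvariant $h$ at the first step, but $T(\bsk,\xi)h$ is no longer antiinvariant, so subsequent iterations produce genuinely non-trivial error terms. Showing that all cross terms cancel---when paired against the $W$-invariant $f$---requires careful bookkeeping of the Cherednik cross-relations of the form $r_\alpha T(\bsk,\xi)=T(\bsk,r_\alpha\xi)r_\alpha+(\text{correction})$ and the specific structure of $\pi^+$ as a product of linear forms in coroots from $\R^0$.
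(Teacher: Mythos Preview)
Your reduction to $(\ast)$ is correct, and so is your formula for the formal adjoint $T(\bsk,\xi)^{\ast}=-T(\bsk,\xi)-\sum_{\alpha\in\R^+}\bsk_\alpha\alpha(\xi)r_\alpha$. But the gap you flag in the last paragraph is real and is not just bookkeeping. Already at the first step, your own formula gives $T(\bsk,\xi)^{\ast}h=(-T(\bsk,\xi)+2\rho(\bsk)(\xi))h$ on skew $h$, which is \emph{not} $T(\bsk,-\xi)h$; the extra $2\rho(\bsk)(\xi)$ does not vanish, so even the target identity $\int T(\bsk,\pi^+)f\cdot h\,\delta_\bsk=\int f\cdot T(\bsk,\pi^+(-\,\cdot\,))h\,\delta_\bsk$ is not what a factor-by-factor argument produces. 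After one step $T(\bsk,\xi)^{\ast}h$ lies in neither isotypic component, and the cross-terms generated at each subsequent step do not cancel by any visible mechanism; the ``specific structure of $\pi^+$'' you invoke would have to encode precisely the Hecke-algebra identity below, so you would end up reproving it anyway.

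The paper sidesteps the iteration entirely. First, Opdam's adjunction formula
\[
\int_{\mathfrak a}\varphi\,(T(\bsk,p)\psi)\,\delta_\bsk\,dx=\int_{\mathfrak a}(w^*T(\bsk,p)w^*\varphi)\,\psi\,\delta_\bsk\,dx
\]
is applied once with $p=\pi^-$ (it holds for arbitrary $p\in S(\mathfrak a_\C)$, not only linear ones). Using $w^*f=f$, skewness of $h=\varDelta g$, and $\epsilon^-w^*=(-1)^r\epsilon^-$, this reduces $(\ast)$ to the single algebraic identity
\[
\epsilon^-\pi^-\epsilon^+=\pi^+\epsilon^+\quad\text{in the graded Hecke algebra }\boldsymbol H_\bsk,
\]
where $\epsilon^\pm=|W|^{-1}\sum_w(\sgn w)^{(1\mp1)/2}w$. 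That identity is then proved by a dimension argument: under the linear isomorphism $S(\mathfrak a_\C)\simeq\boldsymbol H_\bsk\epsilon^+$, the left $W$-module $V\epsilon^+$ (with $V$ the polynomials of degree $\le |\R^0\cap\R^+|$) contains a \emph{unique} skew line by the theory of $W$-harmonic polynomials; both sides of the displayed identity lie in that line and share the same top term $\prod_{\alpha\in\R^0\cap\R^+}\alpha^\vee\,\epsilon^+$, hence coincide. This replaces your inductive cancellation problem by a one-line uniqueness statement.
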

\begin{proof}
We use the graded Hecke algebra $\boldsymbol H_\bsk$,
which is the unique unital $\C$-algebra with the following properties:\\
\noindent(1) \,
$\boldsymbol H_\bsk$ contains $S(\mathfrak a_\C)$ and the group algebra $\C W$ as subalgebras;\\
\noindent(2) \,
the multiplication map $S(\mathfrak a_\C)\otimes\C W \to \boldsymbol H_\bsk$ is a linear isomorphism;\\
\noindent(3) \,
$r_\alpha \cdot \xi = r_\alpha(\xi) \cdot r_\alpha -(\bsk_\alpha+2\bsk_{2\alpha})\alpha(\xi)$
for $\xi\in\mathfrak a$ and $\alpha\in\B$.\\
\noindent
Importantly the $S(\mathfrak a_\C)$-action $T(\bsk,\cdot)$
and the usual $W$-action on $C^\infty(\mathfrak a_\C)$ are integrated into
an $\boldsymbol H_\bsk$-action on $C^\infty(\mathfrak a_\C)$.
Put 
\begin{equation*}
\epsilon^+=|W|^{-1}\sum_{w\in W}w,\quad
\epsilon^-=|W|^{-1}\sum_{w\in W}(\sgn w)w\,
 \in \C W.
\end{equation*}
We assert that
\begin{equation}\label{eq:epepe}
\epsilon^-\pi^-\epsilon^+=\pi^+\epsilon^+\quad\text{in }\boldsymbol H_\bsk.
\end{equation}
Indeed, it follows from Property (2) of $\boldsymbol H_\bsk$ that
the map $S(\mathfrak a_\C)\to \boldsymbol H_\bsk\epsilon^+$ defined by $p\mapsto p\epsilon^+$
is a linear isomorphism. 
Let $V$ be the subspace of $S(\mathfrak a_\C)$
consisting of the elements of degree $\le|\R^0\cap\R^+|$.
Then $V\epsilon^+$ is a left $W$-module and
it easily follows from the theory of $W$-harmonic polynomials
that a skew element in $V\epsilon^-$ is unique up to a scalar multiple. 
On the other hand, by Property (3),
both the sides of \eqref{eq:epepe} are skew and
have the same top degree term $\prod_{\alpha\in \R^0\cap\R^+}\alpha^\vee\epsilon^+$.
Thus the assertion is proved.

Let $w^*=-\operatorname{id}_{\mathfrak a}$ be the longest element of $W$.
Then $\sgn(w^*)=(-1)^r$.
By \cite[Lemma 7.8]{Op:Cherednik}
it holds that 
\begin{equation}\label{eq:adjunction}
\int_{\mathfrak a} \varphi(x) (T(\bsk,p)\psi(x))\delta_\bsk(x)dx\\
=
\int_{\mathfrak a} (w^* T(\bsk,p) w^*\varphi(x)) \psi(x)\delta_\bsk(x)dx
\end{equation}
for $p\in S(\mathfrak a_\C)$, $\varphi\in C^\infty_0(\mathfrak a)$ and $\psi\in C^\infty(\mathfrak a)$.
Now we calculate
\begin{align*}
\int_{\mathfrak a} &f(x) (T(\bsk,\pi^-)\varDelta(x) g(x))\delta_\bsk(x)dx\\
&=\int_{\mathfrak a} (w^*T(\bsk,\pi^-)f(x)) \varDelta(x) g(x)\delta_\bsk(x)dx\\
&=(-1)^r \int_{\mathfrak a} (\epsilon^-T(\bsk,\pi^-)\epsilon^+f(x)) \varDelta(x) g(x)\delta_\bsk(x)dx
&&(\,\text{as } \varDelta(x)\text{ is skew}\,)\\
&=(-1)^r \int_{\mathfrak a} (T(\bsk,\pi^+)f(x)) \varDelta(x) g(x)\delta_\bsk(x)dx
&&(\,\text{by } \eqref{eq:epepe}\,)\\
&=(-1)^r \int_{\mathfrak a} (G^+(\bsk)f(x)) g(x) \varDelta(x)^2 \delta_\bsk(x)dx\\
&=(-1)^r \int_{\mathfrak a} (G^+(\bsk)f(x)) g(x) \delta_{\bsk+\boldsymbol 1}(x)dx.&&
\qedhere
\end{align*}
\end{proof}

\begin{proof}[Proof of Proposition \ref{prop:uniest}]
Let $\bsk\in \mathcal K_1+\sqrt{-1}\mathcal K$.
In view of \eqref{eqn:mcond1} we have
$\tilde\bsk, \bsk+\boldsymbol 1\in \mathcal K_1+\sqrt{-1}\mathcal K$.
By \eqref{eq:ksymm}
\begin{equation}\label{eq:HGFshift1}
\mathcal F_\bsk f=\mathcal F_{\tilde\bsk}
\biggl(4^{2\bsk_\l-1}\prod_{i=1}^{r}\Big(\cosh\frac{\beta_i}{2}\Big)^{2\bsk_\l-1}f\biggr).
\end{equation}
Also, by \eqref{eq:Fshift} and Lemma~\ref{lem:adjunction}
\begin{equation}\label{eq:HGFshift2}
\mathcal F_\bsk f=\mathcal F_{\bsk+\boldsymbol 1}
\biggl(
(-1)^r\frac{\tilde{\cc}(\rho(\bsk+\boldsymbol 1),\bsk+\boldsymbol 1)}{\tilde{\cc}(\rho(\bsk),\bsk)}
G^+(\bsk)f\biggr).
\end{equation}
Let $p\in S(\mathfrak a_\C)^W$.
Then by \eqref{eq:CFlF} and \eqref{eq:adjunction}
\begin{equation}\label{eq:dualop}
p(\lambda)\mathcal F_\bsk f(\lambda)
=\mathcal F_\bsk (T(\bsk,p)f)(\lambda).
\end{equation}

Now let us choose $a, b\in\mathbb N$ so that
$\bsk':=\widetilde{(\bsk+a\boldsymbol 1)}+b\boldsymbol 1\in \mathcal K_++\sqrt{-1}\mathcal K$
for all $\bsk\in U$.
By the three formulas above
there exists a $C^\infty$-function $g(\bsk,x)$ on $(\mathcal K_1+\sqrt{-1}\mathcal K)\times \mathfrak a$
which is $W$-invariant in $x$, supported in $(\mathcal K_1+\sqrt{-1}\mathcal K)\times C$,
and satisfying
$p(\lambda)\mathcal F_\bsk f(\lambda)=\mathcal F_{\bsk'}(g(\bsk,\cdot))(\lambda)$.
Hence the proposition follows from Lemma \ref{lem:Fest}
(cf.~the proof of \cite[Theorem 4.1]{Op99}).
\end{proof}

\begin{rem}
An argument similar to the above is used in \cite{HoO1} in the setting 
of the Jacobi polynomial associated with the root system of type $BC$.
\end{rem}

Let $\PW$ denote the space of all holomorphic functions $\phi$ on $\acs$ such that
\begin{equation}\label{eq:PWcondition}
\sup_{\substack{\lambda\in\acs}}(1+||\lambda||)^n e^{-H_C(\mathrm{Re}\,\lambda)}|\phi(\lambda)|<+\infty\quad
(\forall n\in\mathbb N)
\end{equation}
for some $W$-invariant non-empty convex compact subset $C\subset\mathfrak a$.
 It coincides with the Paley-Wiener space in the Euclidean Fourier analysis 
(the case of $\bsk=0$). 
By Proposition~\ref{prop:uniest}, $\mathcal F_\bsk f\in \PW^W$
for $f\in C_0^\infty(\mathfrak{a})^W$.

Now suppose $\bsk\in\mathcal K_1+\sqrt{-1}\mathcal K$.
By \eqref{eqn:tcf} we can choose 
$\eta\in -\Cl(\mathfrak{a}_+^*)$ so that  
$\cc(-\lambda,\bsk)^{-1}$ is regular on
$\{\lambda\in\acs\,;\,\text{Re}\,\lambda\in \eta-\Cl(\mathfrak{a}_+^*)\}$. 
For $\phi\in \PW^W$ define
\begin{equation}
 \mathcal{J}_\bsk \,\phi(x)=\int_{\eta+\sqrt{-1}\mathfrak{a}^*}\phi(\lambda)\,\varPhi(\lambda,\bsk;x)
\,\cc(-\lambda,\bsk)^{-1}\,d\mu(\lambda)
\quad (x\in\mathfrak{a}_+). 
\label{eqn:jf}
\end{equation}
If $T\subset\mathfrak a^*$ is a small compact neighborhood of $\eta$ then
the polynomial $p(\lambda)$ in Lemma~\ref{lem:esthcs} is $1$.
Hence by Lemma \ref{lem:esthcs}, Lemma~\ref{lem:estcf} and \eqref{eq:PWcondition},
the integral on the right hand side of \eqref{eqn:jf} does not depend on the choice of $\eta$ and converges to
a $C^\infty$-function on $\mathfrak a_+$.
Moreover $\mathcal{J}_\bsk \,\phi(x)$ is holomorphic in $\bsk$ for each fixed $x$.
By \eqref{eq:ksymm0} and \eqref{eq:ksymm1}
\begin{equation}\label{eq:WPOshift1}
4^{2\bsk_\l-1}\prod_{i=1}^{r}\Big(\cosh\frac{\beta_i}{2}\Big)^{2\bsk_\l-1}\mathcal{J}_\bsk \,\phi(x)
=\mathcal{J}_{\tilde\bsk} \,\phi(x).
\end{equation}
Also, by \eqref{eq:Phishift}
\begin{equation}\label{eq:WPOshift2}
(-1)^r\frac{\tilde{\cc}(\rho(\bsk+\boldsymbol 1),\bsk+\boldsymbol 1)}{\tilde{\cc}(\rho(\bsk),\bsk)}
\tilde G^+(\bsk)\mathcal{J}_\bsk \,\phi(x)
=\mathcal{J}_{\bsk+\boldsymbol 1} \,\phi(x).
\end{equation}

\begin{thm}[Inversion formula, first form] \label{thm:1st}
Suppose $\bsk\in\mathcal K_1+\sqrt{-1}\mathcal K$
and choose $\eta$ as above. Suppose $f\in C^\infty_0(\mathfrak{a})^W$ and  $x\in\mathfrak{a}_+$.
Then $f(x)=\mathcal{J}_\bsk\,\mathcal{F}_\bsk f(x)$,
namely
\begin{equation}\label{eqn:1st}
f(x)=
\int_{\eta+\sqrt{-1}\mathfrak{a}^*}\!\mathcal{F}_\bsk 
f(\lambda)\,\varPhi(\lambda,\bsk;x)\,\cc(-\lambda,\bsk)^{-1}
d\mu(\lambda).
\end{equation}
\end{thm}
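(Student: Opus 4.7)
The plan is to reduce the general case to $\bsk\in\mathcal K_+$, already covered by Opdam's Plancherel theorem \cite{Op:Cherednik}, by analytic continuation in the multiplicity parameter. Fix $f\in C_0^\infty(\mathfrak{a})^W$ and $x\in\mathfrak{a}_+$, and view both sides of \eqref{eqn:1st} as functions of $\bsk$ on the open connected set $\mathcal K_1+\sqrt{-1}\mathcal K\subset\mathcal K_\C\cong\C^3$. First I would establish that $\mathcal{J}_\bsk\mathcal{F}_\bsk f(x)$ is holomorphic in $\bsk$ on this set. For $\bsk$ in a prescribed compact subset, one can choose $\eta\in-\Cl(\mathfrak{a}_+^*)$ uniformly so that $\cc(-\lambda,\bsk)^{-1}$ is regular on the contour; Proposition~\ref{prop:uniest} (Paley--Wiener estimate for $\mathcal{F}_\bsk f$), Lemma~\ref{lem:esthcs} (polynomial bound on $\varPhi(\lambda,\bsk;x)$) and Lemma~\ref{lem:estcf} (polynomial bound on $\cc(-\lambda,\bsk)^{-1}$) together give uniform absolute convergence of the integral, while joint holomorphicity of the integrand in $(\lambda,\bsk)$ yields the desired holomorphicity in $\bsk$. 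Independence from the choice of $\eta$ is already noted to follow from Cauchy's theorem.

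The second step is to verify \eqref{eqn:1st} for $\bsk\in\mathcal K_+$. From \eqref{eqn:tcf} one sees that $\cc(-\lambda,\bsk)^{-1}$ is regular for $\mathrm{Re}\,\lambda\in-\Cl(\mathfrak{a}_+^*)$, so $\eta=0$ is admissible. Opdam's inversion formula reads
\[
f(x)=\frac{1}{|W|}\int_{\sqrt{-1}\mathfrak{a}^*}\mathcal{F}_\bsk f(\lambda)\,F(\lambda,\bsk;x)\,\cc(\lambda,\bsk)^{-1}\cc(-\lambda,\bsk)^{-1}\,d\mu(\lambda).
\]
Substituting $F(\lambda,\bsk;x)=\sum_{w\in W}\cc(w\lambda,\bsk)\,\varPhi(w\lambda,\bsk;x)$, changing variable $\lambda\mapsto w^{-1}\lambda$ in the $w$-th summand, and using the $W$-invariance of $\mathcal{F}_\bsk f$ together with the $W$-invariance of the Plancherel density $\cc(\lambda,\bsk)^{-1}\cc(-\lambda,\bsk)^{-1}$ (which follows from the Gamma reflection formula applied to \eqref{eqn:tcf}), the $|W|$ summands become identical and collapse to the right side of \eqref{eqn:1st}. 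These manipulations are justified by absolute convergence: $\mathcal{F}_\bsk f(\lambda)$ decays faster than any polynomial on $\sqrt{-1}\mathfrak{a}^*$ by Proposition~\ref{prop:uniest}, whereas the remaining factors have at most polynomial growth there.

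Finally, $\mathcal{J}_\bsk\mathcal{F}_\bsk f(x)-f(x)$ is a holomorphic function of $\bsk$ on the connected open set $\mathcal K_1+\sqrt{-1}\mathcal K\subset\C^3$ that vanishes on the real open subset $\{(\bsk_s,\bsk_m,\bsk_\l)\in\mathbb{R}^3 : \bsk_s,\bsk_m,\bsk_\l>0\}$ of the real slice, so by the identity principle for holomorphic functions of several complex variables it vanishes identically, proving \eqref{eqn:1st}. The hardest step is the middle one: rigorously justifying the interchange of finite sum and integral and the change of variables that turn Opdam's Weyl-symmetric formula into the $\varPhi$-based form \eqref{eqn:1st}, which rests on the absolute convergence estimates above and on the $W$-invariance of the Plancherel density.
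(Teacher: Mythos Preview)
Your proof is correct and follows essentially the same strategy as the paper: establish holomorphicity of $\mathcal{J}_\bsk\mathcal{F}_\bsk f(x)$ in $\bsk$ via Lemma~\ref{lem:esthcs}, Lemma~\ref{lem:estcf}, and Proposition~\ref{prop:uniest}, verify the identity for $\bsk\in\mathcal K_+$, and conclude by analytic continuation. The only difference is that the paper cites Opdam's result \cite[Theorem~9.13]{Op:Cherednik} directly in the $\varPhi$-based form \eqref{eqn:1st}, whereas you rederive it from the $F$-based (Weyl-symmetric) form by expanding $F=\sum_w\cc(w\lambda)\varPhi(w\lambda)$ and changing variables; this passage is the same computation the paper performs in going between the two lines of \eqref{eqn:jfempty}, so your extra step is valid but not needed.
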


\begin{proof}
Lemma~\ref{lem:esthcs}, Lemma~\ref{lem:estcf}, and Proposition~\ref{prop:uniest} impliy that  
the integral on the right hand side of \eqref{eqn:1st} is holomorphic in $\bsk$. 
By \cite[Theorem~9.13]{Op:Cherednik}
\eqref{eqn:1st}
holds for $\bsk\in\mathcal{K}_+$. 
The general case follows by analytic continuation. 
\end{proof}

\begin{prop}\label{prop:J_kbdd}
For any $\phi\in\PW^W$ the support of $\mathcal J_\bsk\,\phi$ is bounded.
\end{prop}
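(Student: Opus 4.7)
The plan is to prove the stronger statement that $\operatorname{supp}\mathcal J_\bsk\phi\subset C$, where $C$ is the $W$-invariant convex compact subset of $\mathfrak a$ attached to $\phi$ by \eqref{eq:PWcondition}. Since $\mathcal J_\bsk\phi$ is defined as an integral on $\mathfrak a_+$, it suffices to prove $\mathcal J_\bsk\phi(x)=0$ for every $x\in \mathfrak a_+\setminus C$. My method is the standard contour-deformation argument: moving the contour deep into the negative chamber so that the exponential decay beats the Paley--Wiener growth $e^{H_C(\operatorname{Re}\lambda)}$ of $\phi$ in the direction of $\operatorname{Re}\lambda$.

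Fix $x\in \mathfrak a_+\setminus C$. Using the Hahn--Banach separation theorem, I would choose $\lambda_0\in \mathfrak a^*$ with $\lambda_0(x)>H_C(\lambda_0)$. Letting $\rho_0$ denote the element of $W\lambda_0$ lying in $\Cl(\mathfrak a_+^*)$ and using that $C$ is $W$-invariant (so $H_C(\rho_0)=H_C(\lambda_0)$) together with the fact that for $x\in \Cl(\mathfrak a_+)$ the pairing $w\rho_0(x)$ is maximized by the chamber representative $w=e$, I obtain $\rho_0(x)\ge \lambda_0(x)>H_C(\rho_0)$. Next I would shift the contour in \eqref{eqn:jf} from $\eta+\sqrt{-1}\mathfrak a^*$ to $\eta-t\rho_0+\sqrt{-1}\mathfrak a^*$ for arbitrary $t\ge 0$. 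This shift is legitimate since $\phi$ is entire; $\cc(-\lambda,\bsk)^{-1}$ remains regular on the slab because $\eta-s\rho_0\in \eta-\Cl(\mathfrak a_+^*)$ for $s\in[0,t]$; the poles $\langle\lambda,\alpha^\vee\rangle=j$ ($\alpha\in\Rp$, $j\ge 1$) of $\varPhi(\lambda,\bsk;x)$ require $\operatorname{Re}\langle\lambda,\alpha^\vee\rangle>0$, which fails throughout $-\Cl(\mathfrak a_+^*)$; and the Paley--Wiener decay of $\phi$ in $\operatorname{Im}\lambda$ kills the contributions at infinity via a Fubini-type iteration of one-variable Cauchy.

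On the shifted contour I would combine three estimates: the Paley--Wiener bound on $\phi$, the polynomial bound $|\cc(-\lambda,\bsk)^{-1}|\le C(1+||\lambda||)^M$ from Lemma \ref{lem:estcf}, and a bound on $\varPhi$ derived from Lemma \ref{lem:esthcs}. Concretely, choosing $\varepsilon<\min_{\alpha\in \B}\alpha(x)$ and summing the Harish-Chandra series with the coefficient estimate yields
\[
|\varPhi(\lambda,\bsk;x)|\le C_x(1+||\lambda||)^n\, e^{\operatorname{Re}(\lambda-\rho(\bsk))(x)}
\]
for $\operatorname{Re}\lambda$ in $-\Cl(\mathfrak a_+^*)$ at positive distance from the walls. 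Taking $N$ large enough to make the $d\mu(\lambda)$-integral over $\operatorname{Im}\lambda$ converge, I obtain
\[
|\mathcal J_\bsk\phi(x)|\le C_x'\, e^{H_C(\eta-t\rho_0)+(\eta-t\rho_0)(x)-\operatorname{Re}\rho(\bsk)(x)}.
\]
The $W$-invariance of $C$ together with sublinearity of support functions gives $H_C(\eta-t\rho_0)\le H_C(\eta)+tH_C(\rho_0)$, so the exponent is majorized by $H_C(\eta)+\eta(x)-\operatorname{Re}\rho(\bsk)(x)-t(\rho_0(x)-H_C(\rho_0))$, which tends to $-\infty$ by the separation property $\rho_0(x)>H_C(\rho_0)$. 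Hence $\mathcal J_\bsk\phi(x)=0$. I expect the main technical obstacle to lie in keeping the constant $C_x'$ uniform in $t$ so that the exponential decay is not absorbed into a growing prefactor; this reduces to the observation that for $\operatorname{Re}\lambda$ in the closed negative chamber at fixed positive distance from the walls, the recursion denominators $\langle 2\lambda-\kappa,\kappa\rangle$ governing $a_\kappa(\lambda,\bsk)$ are uniformly bounded below in modulus, so the Gangolli-type estimates behind Lemma \ref{lem:esthcs} hold uniformly as we push the contour to infinity.
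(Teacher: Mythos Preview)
Your argument is correct, but it takes a different route from the paper. The paper's proof is a two-line analytic-continuation trick: for $\bsk\in\mathcal K_+$ the vanishing of $\mathcal J_\bsk\phi$ on $\mathfrak a_+\setminus C$ is already contained in Opdam's Paley--Wiener theorem \cite[Theorem 8.6]{Op:Cherednik}, and since $\mathcal J_\bsk\phi(x)$ was shown (just before Theorem~\ref{thm:1st}) to be holomorphic in $\bsk$ for each fixed $x\in\mathfrak a_+$, the identity $\mathcal J_\bsk\phi(x)=0$ propagates to all $\bsk\in\mathcal K_1+\sqrt{-1}\mathcal K$.

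What you do instead is carry out directly, and for all $\bsk$ at once, the classical contour-shift argument that underlies the group-case Paley--Wiener theorem. This is self-contained---it does not invoke \cite{Op:Cherednik} as a black box---and in fact gives the sharp support conclusion $\operatorname{supp}\mathcal J_\bsk\phi\subset C$. The price is that you need versions of Lemmas~\ref{lem:esthcs} and~\ref{lem:estcf} valid for $T$ an unbounded ray in $-\Cl(\mathfrak a_+^*)$ rather than a compact set, and you correctly flag this as the main technical point. Your observation that $\operatorname{Re}\langle 2\lambda-\kappa,\kappa\rangle\le -\|\kappa\|^2$ for $\operatorname{Re}\lambda\in-\Cl(\mathfrak a_+^*)$ and $\kappa\in Q_+\setminus\{0\}$ is precisely what makes the Gangolli recursion run with $t$-independent constants, and Stirling in the right half-plane gives the uniform polynomial bound on $\cc(-\lambda,\bsk)^{-1}$ along the ray (the relevant Gamma arguments have real part tending to $+\infty$). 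Both extensions are standard and appear in the classical references \cite{Hel2,GV}, but they are genuinely stronger than what the paper states. One small remark: to ensure the shifted base points $\eta-t\rho_0$ stay in the open negative chamber for $t>0$ you should perturb $\lambda_0$ so that $\rho_0$ is regular, which the open separation condition permits. The paper's route is shorter given the existing literature; yours is more explicit and avoids the dependence on Opdam's earlier result.
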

\begin{proof}
Take $y\in\mathfrak a_+$ so that 
$\phi$ satisfies \eqref{eq:PWcondition}
with $C$ being the convex hull of $W y$.
If $\bsk\in\mathcal K_+$ then 
$\mathcal J_\bsk\,\phi(x)=0$ for any $x\in\mathfrak a_+\,\setminus\, C$
by \cite[Theorem 8.6]{Op:Cherednik}. 
This holds for all $\bsk\in\mathcal K_1+\sqrt{-1}\mathcal K$
by analytic continuation. 
\end{proof}

Now suppose $\bsk\in\mathcal K_1$.
Since $2\bsk_s+2\bsk_\l+1>0$ by \eqref{eqn:mcond1},
$(\bsk_s+1,\bsk_s+2\bsk_\l)\notin (-\mathbb N)^2$.
Hence from \eqref{eqn:tcf} we see that
$\cc(-\lambda,\bsk)^{-1}$ is regular on $\sqrt{-1}\mathfrak a^*$
as a function in $\lambda$.
For $\phi\in\PW^W$ and $x\in\mathfrak{a}_+$ we define
\begin{align}
\mathcal{J}_{\bsk,\emptyset}\,\phi(x)&=\int_{\sqrt{-1}\mathfrak{a}^*}\phi(\lambda)\,\varPhi(\lambda,\bsk;x)\,
\cc(-\lambda,\bsk)^{-1}\,d\mu(\lambda)\label{eqn:jfempty}\\
&=\frac{1}{|W|}
\int_{\sqrt{-1}\mathfrak{a}^*}\phi(\lambda)\,F(\lambda,\bsk;x)\,|\cc(\lambda,\bsk)|^{-2}d\mu(\lambda).\notag
\end{align}
The right hand side of the first line converges to
a $C^\infty$-function on $\mathfrak a_+$ by Lemma \ref{lem:esthcs}
and Lemma \ref{lem:estcf}
(with $a_j(\bsk)$ and $b_j(\bsk)$ being constant functions)
and becomes the second line by changes of variables and \eqref{eqn:hcs2}.
The meaning of the symbol $\mathcal{J}_{\bsk,\emptyset}$ will be clear in Section \ref{sect:inversion}.
$\mathcal{J}_{\bsk,\emptyset}\,\phi$ extends to a $W$-invariant $C^\infty$-function on $\mathfrak a$ by the following lemma.

\begin{lem}\label{lem:est}
Let $\bsk\in\mathcal{K}_\mathrm{reg}\cap\mathcal K$. Let $V\subset\mathfrak{a}$ be a compact set 
and $p\in S(\mathfrak{a}_\C)$. Then there exist constants $C>0$ and $n\in\mathbb{N}$ such that 
\[
|p(\partial_x)F(\lambda,\bsk;x)|\leq C(1+||\lambda||)^n e^{\max_{w\in W} \mathrm{Re}\,w\lambda(x)}
\]
for all $\lambda\in\acs$ and $x\in V$.
\end{lem}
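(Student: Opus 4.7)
The plan is to reduce to the case $\bsk\in\mathcal K_+$ via shift operators, then apply Lemma~\ref{lem:Fest} to bound $F$ itself, and finally bootstrap to derivatives via a complex extension and Cauchy's formula.

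First, the $W$-invariance $F(\lambda,\bsk;wx)=F(\lambda,\bsk;x)$ combined with the $W$-invariance of $\max_{w\in W}\mathrm{Re}(w\lambda)(x)$ in $x$ allows a reduction of $V$. The substitution $x\mapsto wx$ conjugates $p(\partial_x)$ to $(w\cdot p)(\partial_x)$, so after replacing $p$ by its finitely many $W$-translates I may assume $V\subset \Cl(\mathfrak a_+)$. Since $F$ is real analytic on $\mathfrak a$, the bound at points of $V$ on the walls follows by continuity from the interior, so I may take $V$ to be a compact subset of $\mathfrak a_+$.

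Next, following the proof of Proposition~\ref{prop:uniest}, I choose $a,b\in\mathbb N$ so that $\bsk'':=\widetilde{\bsk+a\boldsymbol 1}+b\boldsymbol 1\in\mathcal K_+\cap\mathcal K$. Combining \eqref{eq:ksymm} with iterations of \eqref{eq:Fshift} yields an identity on $\mathfrak a_+$,
\[
F(\lambda,\bsk;x)=D_\bsk F(\lambda,\bsk'';x),
\]
where $D_\bsk$ is a differential operator in $x$ with $\lambda$-independent coefficients analytic on $\mathfrak a_+$ (a composition of iterated hypergeometric shift operators interleaved with multiplication by the nowhere-vanishing factor from \eqref{eq:ksymm}). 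Applying $p(\partial_x)$ and Leibniz's rule, it suffices to establish
\[
|q(\partial_x)F(\lambda,\bsk'';x)|\le C(1+\|\lambda\|)^n e^{\max_{w\in W}\mathrm{Re}(w\lambda)(x)}
\]
on a compact subset of $\mathfrak a_+$ for each of finitely many $q\in S(\mathfrak a_\C)$.

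For $\bsk''\in\mathcal K_+\cap\mathcal K$ the vector $\kappa=\frac12\sum_\alpha|\mathrm{Im}(\bsk''_\alpha)|\alpha$ of Lemma~\ref{lem:Fest} vanishes, yielding $|F(\lambda,\bsk'';x)|\le|W|^{1/2}e^{\max_{w\in W}\mathrm{Re}(w\lambda)(x)}$. To obtain the derivative bound I extend $F(\lambda,\bsk'';\cdot)$ holomorphically in $x$ to a complex tube $V+\sqrt{-1}U\subset\mathfrak a_\C$, establish an analogous bound $|F(\lambda,\bsk'';z)|\le C\,e^{\max_{w\in W}\mathrm{Re}(w\lambda)(\mathrm{Re}\,z)+M\|\mathrm{Im}\,z\|\cdot\|\lambda\|}$ on the tube, and apply Cauchy's integral formula on a polydisc of radius $\sim\min(1,1/\|\lambda\|)$ centered at each $x\in V$; this produces $|q(\partial_x)F(\lambda,\bsk'';x)|\le C_q(1+\|\lambda\|)^{\deg q}e^{\max_{w\in W}\mathrm{Re}(w\lambda)(x)}$, which combined with the shift reduction completes the proof.

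The main obstacle is the complex-extension step: verifying the stated bound for $F(\lambda,\bsk'';\cdot)$ on a complex tube around $\mathfrak a$. One route is via Rösler's integral representation of $F$ (for $\bsk''\in\mathcal K_+$) as the Laplace transform of a probability measure supported on the convex hull of $Wx$, which extends to complex $x$ with the stated control. A more self-contained alternative is to bypass the complex extension entirely and apply $q(\partial_x)$ directly to the Harish-Chandra expansion \eqref{eqn:hcs2} on $\mathfrak a_+$, bounding the resulting double sum by combining Lemmas~\ref{lem:esthcs} and \ref{lem:estcf}; the technical point there is to verify that the apparent poles in $\lambda$ of the individual summands cancel in $F$ by analyticity, so that the uniform bound survives to all $\lambda\in\acs$.
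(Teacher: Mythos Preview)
Your reduction to $\bsk''\in\mathcal K_+$ via iterated use of \eqref{eq:ksymm} and \eqref{eq:Fshift} is exactly the paper's approach; its proof is essentially a citation of \cite[Corollary~6.2]{Op:Cherednik} for the $\mathcal K_+$ case and of \cite[Theorem~2.5]{Op99} for the shift reduction.

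The only substantive difference is how you handle the $\mathcal K_+$ step. You start from the pointwise bound of Lemma~\ref{lem:Fest} and propose to upgrade to derivatives by holomorphic extension in $x$ and Cauchy's formula on polydiscs of radius $\sim(1+\|\lambda\|)^{-1}$. This is legitimate in principle but, as you rightly flag, it requires a bound for $F(\lambda,\bsk'';z)$ on a complex tube with growth $e^{M\|\mathrm{Im}\,z\|\cdot\|\lambda\|}$, which Lemma~\ref{lem:Fest} alone does not supply. Opdam's proof of \cite[Corollary~6.2]{Op:Cherednik} avoids this obstacle entirely: the non-symmetric Cherednik kernel $G(\lambda,\bsk;x)$ satisfies a first-order system $\partial_\xi G=(\lambda(\xi)+A_\xi)G$ with $A_\xi$ a $\lambda$-independent differential-difference operator whose coefficients are locally bounded, so one gets $|\partial_\xi G|\le C(1+\|\lambda\|)\max_{w\in W}|wG|$ directly; iteration and symmetrization then give the full derivative estimate on all of $\mathfrak a$ without any complex extension. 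Invoking that argument (or simply the cited corollary) removes your ``main obstacle'' cleanly. Your Harish-Chandra-series alternative also works on compact subsets of $\mathfrak a_+$, but making the pole-cancellation uniform in $\lambda$ is genuinely delicate and amounts to reproving the analyticity of $F$ with uniform bounds.

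One small caution on your Step~1: the passage ``by continuity'' from $\Cl(\mathfrak a_+)$ to a compact subset of the open chamber is not automatic---if the constants $C,n$ obtained on interior compacta blew up near walls through the shift-operator coefficients (the paper only asserts their analyticity on $\mathfrak a_+$), continuity of $F$ alone would not recover a uniform wall estimate. In fact the $W$-invariant differential operators $\tilde G^\pm(\bsk)$ have coefficients that extend analytically across the walls, so the identity $F(\lambda,\bsk;\cdot)=D_\bsk F(\lambda,\bsk'';\cdot)$ holds on all of $\mathfrak a$ and the detour through $\mathfrak a_+$ is unnecessary; but this point deserves an explicit check rather than a continuity hand-wave.
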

\begin{proof}
This is a version of \cite[Theorem~2.5]{Op99} for the type $BC_r$ root system
and the same proof can apply.
If $\bsk\in \mathcal{K}_+$ then the estimate is true
by \cite[Corollary~6.2]{Op:Cherednik}.
From this we can deduce the general case 
using \eqref{eq:ksymm} and \eqref{eq:Fshift}. 
\end{proof}

\begin{rem}
Ho and \'Olafsson \cite[Appendix]{HoO2} proved an estimate as in the above lemma for 
$\bsk\in\mathcal{K}$ with $\bsk_s+\bsk_\l\geq 0,\,\bsk_s\geq 0$, and $\bsk_m\geq 0$ by generalizing 
the proof of \cite[Corollary~6.2]{Op:Cherednik}.
\end{rem}

\begin{thm}\label{thm:cont}
Suppose $\bsk\in\mathcal{K}_1$  satisfies
\begin{equation}\label{eqn:mcond3}
\bsk_s\geq -1,\quad\bsk_m\geq 0,\quad\bsk_s+2\bsk_\l\geq 0.
\end{equation}
Then for 
$f\in C_0^\infty(\mathfrak{a})^W$ we have $f=\mathcal{J}_{\bsk,\emptyset}\,\mathcal{F}_\bsk f$, namely
\begin{equation}\label{eqn:invmc}
f(x)=\frac{1}{|W|}
\int_{\sqrt{-1}\mathfrak{a}^*}\mathcal{F}_\bsk
f(\lambda)\,F(\lambda,\bsk;x)\,|\cc(\lambda,\bsk)|^{-2}d\mu(\lambda)\quad 
(x\in\mathfrak{a}).
\end{equation}
\end{thm}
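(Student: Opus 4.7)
My plan is to deduce \eqref{eqn:invmc} from Theorem~\ref{thm:1st} by shifting the integration contour from $\eta+\sqrt{-1}\mathfrak a^*$ down to $\sqrt{-1}\mathfrak a^*$. I will choose $\eta\in-\Cl(\mathfrak a_+^*)$ sufficiently close to the origin so that the regularity condition required by Theorem~\ref{thm:1st} is satisfied and in addition $\langle\eta,\alpha^\vee\rangle>-1$ for every $\alpha\in\Rp$. The latter ensures, via \eqref{eqn:condlambda0}, that $\varPhi(\lambda,\bsk;x)$ is holomorphic in $\lambda$ on the closed strip
\[
\Omega=\bigl\{\lambda\in\acs:\operatorname{Re}\lambda=t\eta,\; t\in[0,1]\bigr\}.
\]
Granted holomorphy of the full integrand on $\Omega$ and appropriate decay, a one-variable-at-a-time Cauchy shift will identify the right-hand side of \eqref{eqn:1st} with the first line of \eqref{eqn:jfempty}, which is $\mathcal J_{\bsk,\emptyset}\mathcal F_\bsk f(x)$.

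The key step is to verify holomorphy of $\cc(-\lambda,\bsk)^{-1}$ on $\Omega$. Reading off \eqref{eqn:tcf}, its candidate pole hyperplanes come from the denominator gammas:
\[
\lambda_p-\lambda_q\in 2\bsk_m+2\mathbb N,\;\; \lambda_p+\lambda_q\in 2\bsk_m+2\mathbb N,\;\; \lambda_i\in\bsk_s+1+2\mathbb N,\;\; \lambda_i\in\bsk_s+2\bsk_\l+2\mathbb N.
\]
Under~\eqref{eqn:mcond3} each such equation forces a non-negative value of $\langle\operatorname{Re}\lambda,\gamma^\vee\rangle$ for some $\gamma\in\Rp$, whereas on $\Omega$ the same pairing is non-positive (since $\eta\in-\Cl(\mathfrak a_+^*)$). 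Intersections with $\Omega$ can therefore occur only on its boundary $\{\operatorname{Re}\lambda=0\}$, and only in the degenerate cases $\bsk_m=0$, $\bsk_s=-1$, or $\bsk_s+2\bsk_\l=0$. In each of these cases, however, the offending denominator gamma in \eqref{eqn:tcf} exactly matches a numerator gamma, the quotient reduces identically to $1$, and no genuine singularity arises.

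To legitimize the contour shift itself, Proposition~\ref{prop:uniest} supplies rapid decay of $\mathcal F_\bsk f(\lambda)$ of any polynomial order on the vertical tube $\Omega+\sqrt{-1}\mathfrak a^*$; Lemma~\ref{lem:esthcs} (with $p(\lambda)=1$ since $\eta$ is small) bounds $\varPhi(\lambda,\bsk;x)$ polynomially in $\lambda$ uniformly for $x$ in a compact subset of $\mathfrak a_+$; and Lemma~\ref{lem:estcf} bounds $\cc(-\lambda,\bsk)^{-1}$ polynomially on $\Omega+\sqrt{-1}\mathfrak a^*$. Together they render the integrand absolutely integrable on each horizontal slice and legitimize iterated coordinate-wise contour deformation. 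The resulting identity
\[
f(x)=\int_{\sqrt{-1}\mathfrak a^*}\mathcal F_\bsk f(\lambda)\,\varPhi(\lambda,\bsk;x)\,\cc(-\lambda,\bsk)^{-1}\,d\mu(\lambda)\quad(x\in\mathfrak a_+)
\]
is $\mathcal J_{\bsk,\emptyset}\mathcal F_\bsk f(x)$ in the first form of \eqref{eqn:jfempty}; its Plancherel recasting as the second line is the argument already indicated after \eqref{eqn:jfempty}. Extension to all of $\mathfrak a$ is immediate from the $W$-invariance of both sides.

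The main obstacle I anticipate is the case-by-case verification of the gamma-factor cancellations in $\cc(-\lambda,\bsk)^{-1}$ on the boundary of condition~\eqref{eqn:mcond3}, particularly when $\bsk_m=0$ coexists with one of $\bsk_s=-1$ or $\bsk_s+2\bsk_\l=0$.
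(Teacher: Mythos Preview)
Your approach is correct and rests on the same observation as the paper: under \eqref{eqn:mcond3} the function $\cc(-\lambda,\bsk)^{-1}$ has no poles meeting $\sqrt{-1}\mathfrak a^*$ (your gamma-factor cancellation analysis is right, and your worry about simultaneous degenerations is unfounded because $\bsk_s=-1$ together with $\bsk_s+2\bsk_\l=0$ would force $\bsk_s+\bsk_\l=-\tfrac12$, excluded by $\bsk\in\mathcal K_1$). The difference is only in how this observation is used. The paper does not shift a contour at all: it simply notes that $\eta=0$ already satisfies the admissibility requirement preceding \eqref{eqn:jf}---namely $\cc(-\lambda,\bsk)^{-1}$ is regular on $\{\operatorname{Re}\lambda\in-\Cl(\mathfrak a_+^*)\}$---so one may take $\eta=0$ directly in Theorem~\ref{thm:1st}. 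Since the independence of the integral \eqref{eqn:jf} on the choice of admissible $\eta$ was established beforehand, your explicit contour deformation (with the attendant estimates from Lemmas~\ref{lem:esthcs}, \ref{lem:estcf} and Proposition~\ref{prop:uniest}) re-proves a special case of something already in hand. What you gain is a self-contained argument; what the paper gains is a one-line proof. One minor inconsistency: your region $\Omega$ is the diagonal segment $\{t\eta:t\in[0,1]\}+\sqrt{-1}\mathfrak a^*$, but ``one-variable-at-a-time'' shifts sweep out the larger box $\prod_j[\eta_j,0]+\sqrt{-1}\mathfrak a^*$; either works for $\eta$ small enough, but you should commit to one.
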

\begin{proof}
If $\bsk\in\mathcal{K}$ satisfies \eqref{eqn:mcond3}, then 
we can choose $\eta=0$ in Theorem \ref{thm:1st} (cf.~\eqref{eqn:tcf}).
\end{proof}

\section{Tempered hypergeometric functions}\label{sect:tempered}

In this section, we define the notion of tempered hypergeometric functions by 
the growth condition \eqref{eqn:tempered} and give a sufficient condtion for 
temperedness 
 (Corollary~\ref{cor:tempered2}). 
We will see in Section~\ref{sect:inversion} that those tempered hypergeometric 
functions contribute to the inversion formula and the Plancherel theorem for 
the hypergeometric Fourier transform. 
Though combinatorial features are different, our argument and result are similar to those of \cite{Op99}, 
where tempered hypergeometric functions are studied for some negative multiplicity functions on 
 reduced root systems. 

If $\bsk\in \mathcal{K}_1$ satisfies \eqref{eqn:mcond3}, then there are only continuous spectra in 
the inversion formula \eqref{eqn:invmc} for the hypergeometric Fourier transform. 
In this case, only the tempered hypergeometric functions $F(\lambda,\bsk)$ with $\lambda\in\sqrt{-1}\mathfrak{a}^*$ contribute.
If $\bsk\in\mathcal{K}_1$ does not satisfy \eqref{eqn:mcond3}, then 
$\cc(-\lambda,\bsk)^{-1}$ has non-negligible singularities and
we must take account of residues
to shift the domain of integration
in the right hand side of \eqref{eqn:1st} as $\eta\to 0$. 
The most continuous part of the spectral decomposition is given by the right hand side of 
\eqref{eqn:invmc}. Besides it, there appear spectra whose supports have dimensions lower than $r$. 

Recall $\mathcal K_1$ consists of all $\bsk\in \mathcal K$ satisfying \eqref{eqn:mcond1}.
In this paper, we exclude the case of $\bsk_m< 0$ in \eqref{eqn:mcond1} and study the case of
\begin{equation}\label{eqn:mcond2}
\bsk_s+\bsk_\l>-\frac12,\quad  \bsk_m\geq 0.
\end{equation}
Under \eqref{eqn:mcond2} the residue calculus can be handled explicitly by 
the same method 
as in the proof of \cite[Theorem~6.7]{Sh94}, where the inversion formula 
for the spherical transform associated with 
 a one-dimensional $K$-type on a simple Lie group of Hermitian type 
is given. 
The case of 
\begin{equation}\label{eqn:mcond20}
\bsk_s+\bsk_\l>-\frac12,\quad  \bsk_m= 0
\end{equation}
reduces to the case of $BC_1$ and is easy to analyze. 
If $\bsk_m<0$, then we might need a different kind of  combinatorial 
argument as in \cite{Op99} and
 we do not go further into this case in this paper.
Let us denote the set of
all $\bsk \in \mathcal K$ that satisfy $\eqref{eqn:mcond2}$ by $\mathcal K'_1$.

It is convenient to adopt the parameters $\al$ and $\be$ for the Jacobi function (cf. \cite{FJ77, Ko75, Ko84}) as in the case of $r=1$. 
 We
substitute 
\begin{equation}\label{eqn:paraj}
\al=\bsk_s+\bsk_\l-\frac12,\quad\be=\bsk_\l-\frac12
\end{equation}
for the multiplicity parameters $\bsk_s$ and $\bsk_\l$. 
Then
$\bsk\in\mathcal K$ belongs to $\mathcal K'_1$ 
if and only if
\begin{equation}\label{eqn:mcond2b}
\al>-1,\quad \bsk_m\geq 0
\end{equation}
and $\rho(\bsk)$ is written as
\begin{equation}
\rho(\bsk)=(\al+\be+1,\al+\be+2\bsk_m+1,\dots,\al+\be+2(r-1)\bsk_m+1).
\end{equation}

For $1\leq i\leq r$ let $\tilde{\cc}_i(\lambda,\bsk)$ denote  the product of the factors of \eqref{eqn:tcf} for 
 the roots $\beta_i$ and $2\beta_i$. 
That is,
\begin{align}\label{eqn:tcfbc1}
\tilde{\cc}_i(\lambda,\bsk)&:=\tilde{\cc}_{\beta_i}(\lambda,\bsk)\tilde{\cc}_{2\beta_i}(\lambda,\bsk) \\
&=
\frac{2^{-\al+\be}\varGamma\big(\frac12\lambda_i\big)\varGamma\big(\frac12(\lambda_i+1)\big)}
{\varGamma\big(\frac12(\lambda_i+\al-\be+1)\big)\varGamma\big(\frac12(\lambda_i+\al+\be+1)\big)}
\notag\\
&=
\frac{2^{-\al+\be-\lambda_i+1}\sqrt{\pi}\varGamma(\lambda_i)}
{\varGamma\big(\frac12(\lambda_i+\al-|\be|+1)\big)\varGamma\big(\frac12(\lambda_i+\al+|\be|+1)\big)}.
\notag
\end{align}

Let $\varTheta$ be a subset of $\B$, 
$\langle\varTheta\rangle$ the subset of $\R$ consisting of  the linear combinations of 
elements in $\varTheta$, and
 $W_\varTheta$ the subgroups of $W$ generated by 
$\{r_\alpha\,;\,\alpha\in\varTheta\}$.
 Define 
\begin{align*}
& \mathfrak{a}_\varTheta=\{x\in  \mathfrak{a}\,;\,\alpha(x)=0\,\,\text{ for all }\alpha\in \varTheta\}, \\
& \mathfrak{a}(\varTheta)=\{x\in\mathfrak{a}\,;\,\langle x,y\rangle=0\,\,\text{ for all }y\in \mathfrak{a}_\varTheta\}.
\end{align*}
Then $\langle\varTheta\rangle$ is a root system in $\mathfrak{a}(\varTheta)^*$ with a positive system 
$\langle\varTheta\rangle^+=\langle\varTheta\rangle\cap \Rp$, the set $\varTheta$ of simple roots, 
and the Weyl group $W_\varTheta$. 
For $\lambda\in\mathfrak{a}_\mathbb{C}^*$ we write 
$\lambda=\lambda_{\mathfrak{a}(\varTheta)}+\lambda_{\mathfrak{a}_{\varTheta}}$ with 
$\lambda_{\mathfrak{a}(\varTheta)}\in \mathfrak{a}(\varTheta)^*_\mathbb{C}$ and 
$\lambda_{\mathfrak{a}_{\varTheta}}\in \mathfrak{a}_{\varTheta, \mathbb{C}}^*$. 
Thus $\rho(\bsk)_{\mathfrak{a}(\varTheta)}$ is ``$\rho(\bsk)$'' for the root system $\langle\varTheta\rangle$.
Put
\begin{equation*}
 \tilde{\cc}_\varTheta(\lambda,\bsk)=\!\prod_{\alpha\,\in\, \langle\varTheta\rangle^+}\!
\tilde{\cc}_\alpha(\lambda,\bsk),
\quad \tilde{\cc}^\varTheta(\lambda,\bsk)=\!\prod_{\alpha\,\in\, \Rp\,\setminus\, \langle\varTheta\rangle^+}\!
\tilde{\cc}_\alpha(\lambda,\bsk).
\end{equation*}
In particular $\tilde{\cc}_\emptyset(\lambda,\bsk)=1$. 
Moreover, define
\begin{equation}
 \cc_\varTheta(\lambda,\bsk)=\frac{\tilde{\cc}_\varTheta(\lambda,\bsk)}{\tilde{\cc}_\varTheta(\rho(\bsk),\bsk)}, 
\quad
 \cc^\varTheta(\lambda,\bsk)=\frac{\tilde{\cc}^\varTheta(\lambda,\bsk)}{\tilde{\cc}^\varTheta(\rho(\bsk),\bsk)}.
\end{equation}
Thus
\begin{equation}\label{eq:updaownTheta}
{\cc}(\lambda,\bsk)={\cc}_\varTheta(\lambda,\bsk)\,{\cc}^\varTheta(\lambda,\bsk)
\end{equation}
and 
${\cc}_\varTheta(\lambda,\bsk)$ is the $\cc$-function for the root system $\langle\varTheta\rangle$. 

For $0\leq i\leq r$ let $\varTheta_i$  denote the subset of $\B$ 
given by
\begin{equation}\label{eqn:thetai}
 \varTheta_i=\{\alpha_j\,;\,r-i+1\leq j\leq r\}.
\end{equation}
We have
\begin{align*}
& \mathfrak{a}(\varTheta_i)^*=\mathrm{Span}_\mathbb{R}\{\beta_j\,;\,1\leq j\leq i\}, \\
& \mathfrak{a}_{\varTheta_i}^*=\mathrm{Span}_\mathbb{R}\{ \beta_j\,;\,i+1\leq j\leq r\}. 
\end{align*}
Define the subset $W^{\varTheta_i}$  of $W$ by 
\begin{equation}\label{eq:W^Theta_i}
W^{\varTheta_i}=\{w\in W\,;\,w\langle\varTheta_i\rangle^+\subset \R^+\}.
\end{equation}
This is a complete set of representatives for $W/W_{\varTheta_i}$. 
For $1\leq i\leq r$, 
$\langle\varTheta_i\rangle\subset\mathfrak{a}(\varTheta_i)^*$ is a root system of type $BC_{i}$, 
its Weyl group is $W_{\varTheta_i}\simeq \mathbb{Z}_2^i\rtimes \mathfrak{S}_i$, 
and 
\begin{align}
W^{\varTheta_i}= \label{W^Theta_i1}
\{(\varepsilon,\sigma)\in \mathbb{Z}_2^r\rtimes \mathfrak{S}_r\,;\,
& \sigma(1)<\cdots<\sigma(i),\\
& \varepsilon(\beta_j)=\beta_j \,\,(\,\forall\,
j\in\{\sigma(1),\dots,\sigma(i)\}\,)\}. \notag
\end{align}

Hereafter in this section we assume $\bsk\in\mathcal K'_1$.
For $1\leq i\leq r$ let $\Dki$ denote the finite subset
 of $\mathfrak{a}(\varTheta_i)^*\simeq \mathbb{R}^i$ given  by 
\begin{align}\label{eqn:D}
\Dki=\{(\lambda_1,\dots,\lambda_i)\in\,&\mathbb{R}^i\,;\, \lambda_1+|\be|-\al-1\in 2\mathbb{N},\,\lambda_i<0,\, \\
& \lambda_{j+1}-\lambda_j-2\bsk_m\in 2\mathbb{N}\,\,(1\leq j\leq i-1)\}.\notag
\end{align}
Note 
$\Dki\not=\emptyset$  if and only if $\al-|\be|+2(i-1)\bsk_m+1<0$. 
In particular $\be\ne0$ if $\Dki\not=\emptyset$.
We set $\Dkz=\mathbb R^0=\{0\}$ for convenience.

\begin{thm}
\label{thm:temperedtheta1}
Suppose $0\le i \le r$,
$\lambda_{\mathfrak{a}(\varTheta_i)}\in \Dki$ and $x\in\mathfrak a_+$.
If we write $\lambda=\lambda_{\mathfrak{a}(\varTheta_i)}+\lambda_{\mathfrak{a}_{\varTheta_i}}$
for $\lambda_{\mathfrak{a}_{\varTheta_i}}\in \mathfrak a_{\varTheta_i,\C}^*$ 
then
\begin{equation}\label{eqn:exptheta2}
F(\lambda,\bsk;x)=\!\sum_{w\,\in\, W^{\varTheta_i}}
\!\!\cc(w\lambda,\bsk)\,\varPhi(w\lambda,\bsk;x)
\end{equation}
as a meromorphic function in $\lambda_{\mathfrak a_{\varTheta_i}}$.
In particular, if $\lambda\in\Dkr$, then 
\begin{equation}\label{eqn:fthetadeg3}
F(\lambda,\bsk;x)=\cc(\lambda,\bsk)\,\varPhi(\lambda,\bsk;x).
\end{equation}
\end{thm}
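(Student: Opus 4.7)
The plan is to start from the Harish-Chandra expansion~\eqref{eqn:hcs2}, regarded as a meromorphic identity in $\lambda\in\acs$:
\[
F(\lambda,\bsk;x)=\sum_{w\in W}\cc(w\lambda,\bsk)\,\varPhi(w\lambda,\bsk;x).
\]
Using the coset decomposition $W=\bigsqcup_{w'\in W^{\varTheta_i}}w'W_{\varTheta_i}$, I rewrite this as a double sum indexed by $(w',\sigma)\in W^{\varTheta_i}\times W_{\varTheta_i}$. Since the Harish-Chandra series $\varPhi(\mu,\bsk;x)$ for generic $\mu$ have pairwise distinct leading exponents $\mu-\rho(\bsk)$ at infinity in $\mathfrak{a}_+$ and are therefore linearly independent, proving the identity of the theorem reduces to showing that, for every $w'\in W^{\varTheta_i}$ and every $\sigma\in W_{\varTheta_i}\setminus\{e\}$, the coefficient $\cc(w'\sigma\lambda,\bsk)$ vanishes identically as a meromorphic function of $\lambda_{\mathfrak{a}_{\varTheta_i}}$ (with $\lambda_{\mathfrak{a}(\varTheta_i)}\in\Dki$ held fixed).

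For this vanishing I would use the parabolic factorization $\cc=\cc_{w'\varTheta_i}\,\cc^{w'\varTheta_i}$ associated to the type $BC_i$ sub-root-system $w'\langle\varTheta_i\rangle\subset\R$, the analog of~\eqref{eq:updaownTheta}. Since $\sigma\in W_{\varTheta_i}$ fixes $\mathfrak{a}^*_{\varTheta_i}$ pointwise, the projection of $w'\sigma\lambda$ onto $\mathfrak{a}(w'\varTheta_i)^*_\C$ equals $w'\sigma\lambda_{\mathfrak{a}(\varTheta_i)}$; hence $\cc_{w'\varTheta_i}(w'\sigma\lambda,\bsk)$ is constant in $\lambda_{\mathfrak{a}_{\varTheta_i}}$ while $\cc^{w'\varTheta_i}(w'\sigma\lambda,\bsk)$ is meromorphic and generically finite. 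The $W$-equivariance $\tilde{\cc}_{w'\alpha}(w'\mu,\bsk)=\tilde{\cc}_\alpha(\mu,\bsk)$, which follows from the $W$-invariance of $\bsk$ together with the isometry property of each $w\in W$, then gives
\[
\cc_{w'\varTheta_i}(w'\sigma\lambda_{\mathfrak{a}(\varTheta_i)},\bsk)=\frac{\tilde{\cc}_{\varTheta_i}(\sigma\lambda_{\mathfrak{a}(\varTheta_i)},\bsk)}{\tilde{\cc}_{w'\varTheta_i}(\rho(\bsk),\bsk)},
\]
reducing the problem to the intrinsic type $BC_i$ assertion that $\tilde{\cc}_{\varTheta_i}(\sigma\nu,\bsk)=0$ for every $\nu\in\Dki$ and every $\sigma\in W_{\varTheta_i}\setminus\{e\}$.

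To establish this assertion I would exploit that $\sigma\ne e$ guarantees the existence of a simple root $\alpha_0\in\varTheta_i$ with $\sigma\alpha_0<0$; setting $\alpha:=-\sigma\alpha_0\in\langle\varTheta_i\rangle^+$ gives $\langle\sigma\nu,\alpha^\vee\rangle=-\langle\nu,\alpha_0^\vee\rangle$. If $\alpha_0=\beta_1$, the defining relation $\nu_1-(\al-|\be|+1)\in 2\mathbb{N}$ forces one of the denominator Gamma factors of the combined short/long piece $\tilde{\cc}_\alpha\tilde{\cc}_{2\alpha}(\sigma\nu)$ from~\eqref{eqn:tcfbc1} to coincide with a pole of the form $\varGamma(-M_1)$ with $M_1\in\mathbb{N}$, while the numerator Gamma factors remain analytic since $-\nu_1>0$. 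If instead $\alpha_0=\beta_{k+1}-\beta_k$, the relation $\tfrac12(\nu_{k+1}-\nu_k)-\bsk_m=n_k\in\mathbb{N}$ makes the medium factor $\tilde{\cc}_\alpha(\sigma\nu)=\varGamma(-\bsk_m-n_k)/\varGamma(-n_k)$ vanish, since its denominator has the pole $\varGamma(-n_k)$ while the numerator is finite for generic $\bsk_m$. In either case a single factor of $\tilde{\cc}_{\varTheta_i}(\sigma\nu)$ is zero and the remaining factors are generically finite, so the product vanishes. The main obstacle I foresee is the exceptional loci in $\mathcal{K}'_1$ (for instance $\bsk_m\in\mathbb{N}$) where numerator and denominator Gamma poles can conspire to produce an indeterminate form; on such loci I would argue by meromorphic continuation in $\bsk$, using that both sides of the claimed identity depend meromorphically on $\bsk\in\mathcal{K}'_1$ and agree on a dense open subset. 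The second statement then follows as the case $i=r$, where $W^{\varTheta_r}=\{e\}$ and the sum collapses to the single term $\cc(\lambda,\bsk)\,\varPhi(\lambda,\bsk;x)$.
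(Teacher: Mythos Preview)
Your proposal is correct and follows essentially the same approach as the paper. Both arguments start from the full Harish-Chandra expansion~\eqref{eqn:hcs2}, take $w\in W\setminus W^{\varTheta_i}$, locate a simple root $\alpha_0\in\varTheta_i$ with $w\alpha_0<0$, and then show---by the same case split into $\alpha_0=\beta_1$ versus $\alpha_0=\beta_{k+1}-\beta_k$---that a single Gamma factor in the denominator of $\cc(w\lambda,\bsk)$ hits a pole of the form $\varGamma(-n)$ with $n\in\mathbb N$, forcing $\cc(w\lambda,\bsk)=0$; the non-generic parameter values are handled by analytic continuation in $\bsk$ in both versions. The only cosmetic difference is that you route the computation through the parabolic factorization $\cc=\cc_{w'\varTheta_i}\,\cc^{w'\varTheta_i}$ and the $W$-equivariance $\tilde\cc_{w'\alpha}(w'\mu,\bsk)=\tilde\cc_\alpha(\mu,\bsk)$ to reduce to an intrinsic $BC_i$ statement about $\tilde\cc_{\varTheta_i}(\sigma\nu,\bsk)$, whereas the paper identifies the vanishing factor directly inside the full product~\eqref{eqn:tcf} without naming the parabolic pieces. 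Your packaging is slightly more structured; the paper's is slightly shorter; the underlying computation is identical.
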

\begin{rem}\label{rem:regularization}
When $\langle \lambda_{\mathfrak{a}(\varTheta_i)},\alpha^\vee\rangle\in\mathbb Z$
for some $\alpha\in \langle\varTheta_i\rangle^+$
the right hand side of \eqref{eqn:exptheta2} should be interpreted as follows.
We assume $\be>0$.
(The argument in the case of $\be<0$ is similar.)
Then $\al-|\be|+1=\bsk_s+1$.
For $\Delta\bsk=(\Delta\bsk_s,\Delta\bsk_m,\Delta\bsk_\ell)$ in a 
sufficiently small neighborhood of $0\in \mathcal K_{\mathbb C}$
put
\[
\Delta\lambda_{\mathfrak{a}(\varTheta_i)}
=
(\Delta\bsk_s,\Delta\bsk_s+2\Delta\bsk_m,\ldots,\Delta\bsk_s+2(i-1)\Delta\bsk_m)
\in
\mathfrak{a}(\varTheta_i)_{\mathbb{C}}^*,
\]
so that $\lambda_{\mathfrak{a}(\varTheta_i)}+\Delta\lambda_{\mathfrak{a}(\varTheta_i)} \in D_{\bsk+\Delta\bsk}(\varTheta_i)$
when $\Delta\bsk$ is real.
We see from \eqref{eqn:D} that
$\langle\lambda+\Delta\lambda_{\mathfrak{a}(\varTheta_i)},\alpha^\vee\rangle\not\in \mathbb{Z}$
($\forall\alpha\in\R$) for generic $\Delta\bsk$ and $\lambda_{\mathfrak a_{\varTheta_i}}$.
Thus
\[
\sum_{w\in W^{\varTheta_i}}
\cc(w(\lambda+\Delta\lambda_{\mathfrak{a}(\varTheta_i)}),\bsk+\Delta\bsk)\,\varPhi(w(\lambda+\Delta\lambda_{\mathfrak{a}(\varTheta_i)}),\bsk+\Delta\bsk;x)
\]
is a meromorphic function in $\Delta\bsk$
and $\lambda_{\mathfrak a_{\varTheta_i}}$
and is equal to the holomorphic function $F(\lambda+\Delta\lambda_{\mathfrak{a}(\varTheta_i)},\bsk+\Delta\bsk;x)$
by the proof below.
The right hand side of \eqref{eqn:exptheta2} is the restriction of this function to $\Delta\bsk=0$.
\end{rem}
\begin{proof}[Proof of Theorem \ref{thm:temperedtheta1}]
First we assume that 
$\bsk$ and 
$\lambda_{\mathfrak{a}_{\varTheta_i}}\in\mathfrak{a}_{\varTheta_i,\C}^*$ are generic so that 
$\lambda=\lambda_{\mathfrak{a}(\varTheta_i)}+\lambda_{\mathfrak{a}_{\varTheta_i}}$  
satisfies \eqref{eqn:condlambda}. 
(We consider $\lambda_{\mathfrak{a}(\varTheta_i)}$ varies with $\bsk$ as in the remark above.)
We will show that all terms for $w\in W\,\setminus\,{W^{\varTheta_i}}$ 
in the right hand side of 
\eqref{eqn:hcs2} vanish. 
So suppose $w\in W\,\setminus\,{W^{\varTheta_i}}$.
By \eqref{eq:W^Theta_i} there exists $\alpha\in \varTheta_i$
such that $w\alpha\in -\R^+$.
If $\alpha=\alpha_r=\beta_1$ and $w\beta_1=-\beta_j$ with $1\le j \le r$,
then $\tilde{\cc}_j(w\lambda,\bsk)=0$ by 
\eqref{eqn:tcfbc1} and \eqref{eqn:D}.
If $\alpha=\alpha_{r-j}=\beta_{j+1}-\beta_j$ with $1\leq j\leq i-1$,
then by \eqref{W^Theta_i1}
$\cc(w\lambda,\bsk)$ 
 contains a factor
\[
\frac{\varGamma\big(\frac12(\lambda_j-\lambda_{j+1})\big)
}{\varGamma\big(\frac12(\lambda_j-\lambda_{j+1}+2\bsk_m)\big)},
\] 
which vanishes 
because $\frac12(\lambda_j-\lambda_{j+1}+2\bsk_m)$ 
in the denominator becomes a non-positive integer by \eqref{eqn:D}. 
Hence \eqref{eqn:exptheta2} holds.
We may drop our assumption on $\bsk$ and $\lambda_{\mathfrak{a}_{\varTheta_i}}$ by analytic continuation. 
\end{proof}

We say that $F(\lambda,\bsk)$ is tempered if there exist $C\geq 0$ and $d\in\mathbb{N}$ such that 
\begin{equation}\label{eqn:tempered}
|F(\lambda,\bsk;x)|\leq C(1+||x||)^d\, e^{-\rho(\bsk)(x)}
\quad\text{for all } \,
x\in\Cl(\mathfrak{a}_+).
\end{equation}

\begin{cor}\label{cor:tempered2}
Let $\bsk\in\mathcal{K}'_1$. 
For $\lambda
\in \Dki+\sqrt{-1}\mathfrak{a}_{\varTheta_i}^*$, 
 $F(\lambda,\bsk)$ is tempered. 
Moreover,  $F(\lambda,\bsk)$ 
is a real-valued square integrable function (with respect to $\delta_{\bsk}(x)dx$)
for  any $\lambda\in \Dkr$.
\end{cor}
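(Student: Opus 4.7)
My plan is to derive the three conclusions from the expansion \eqref{eqn:exptheta2} of Theorem \ref{thm:temperedtheta1} combined with asymptotic estimates for the Harish-Chandra series.

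I would begin by establishing the key elementary inequality: writing $\lambda = \lambda_{\mathfrak{a}(\varTheta_i)} + \sqrt{-1}\nu$ with $\lambda_{\mathfrak{a}(\varTheta_i)} = \frac12\sum_{j=1}^{i} \lambda_j\beta_j \in \Dki$ and $\nu\in\mathfrak{a}_{\varTheta_i}^*$, I claim that
\[
\mathrm{Re}\,(w\lambda)(x) = (w\lambda_{\mathfrak{a}(\varTheta_i)})(x) \le 0 \quad \text{for every } w \in W^{\varTheta_i},\ x \in \Cl(\mathfrak{a}_+).
\]
For $w = (\varepsilon,\sigma)\in W^{\varTheta_i}$, the description \eqref{W^Theta_i1} forces $\varepsilon(\beta_{\sigma(j)}) = \beta_{\sigma(j)}$ for $1\le j\le i$, so $w\lambda_{\mathfrak{a}(\varTheta_i)} = \frac12\sum_{j=1}^{i} \lambda_j \beta_{\sigma(j)}$. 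The conditions defining $\Dki$ in \eqref{eqn:D}, together with $\bsk_m\ge 0$, force $\lambda_1\le\cdots\le\lambda_i<0$, and since $\beta_k(x)\ge 0$ on $\Cl(\mathfrak{a}_+)$, the inequality follows.

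Next, combining the Harish-Chandra series $\varPhi(w\lambda,\bsk;x) = e^{(w\lambda-\rho(\bsk))(x)}\sum_{\kappa\in Q_+} a_\kappa(w\lambda,\bsk) e^{-\kappa(x)}$ with the coefficient bound in Lemma \ref{lem:esthcs}, I obtain for each $\varepsilon > 0$ a uniform estimate $|\varPhi(w\lambda,\bsk;x)| \le C_\varepsilon\,e^{(\mathrm{Re}\,(w\lambda)-\rho(\bsk))(x)} \le C_\varepsilon\,e^{-\rho(\bsk)(x)}$ on the $\varepsilon$-deep subset $\{x\in\mathfrak{a}_+ : \alpha(x)\ge\varepsilon\ \forall \alpha\in\B\}$. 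Summing over $w\in W^{\varTheta_i}$ via \eqref{eqn:exptheta2} yields $|F(\lambda,\bsk;x)| \le C\,e^{-\rho(\bsk)(x)}$ there, giving the temperedness bound \eqref{eqn:tempered} away from walls. To extend to all of $\Cl(\mathfrak{a}_+)$ I would combine continuity of $F$ on $\mathfrak{a}$ (which makes the bound automatic on bounded sets, $e^{-\rho(\bsk)(x)}$ being bounded below there) with a limit argument approaching walls along unbounded faces, in the spirit of \cite{Op99}; alternatively, iterated applications of the shifts $\bsk \mapsto \widetilde{\bsk}$ and $\bsk \mapsto \bsk+\boldsymbol 1$ via \eqref{eq:Fshift} and \eqref{eq:ksymm} reduce matters to the $\mathcal K_+$ case, where Opdam's estimates apply directly.

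For square integrability when $\lambda\in\Dkr$, I use \eqref{eqn:fthetadeg3}: $F(\lambda,\bsk;x) = \cc(\lambda,\bsk)\varPhi(\lambda,\bsk;x)$, with leading asymptotic $|F(\lambda,\bsk;x)|^2 \sim |\cc(\lambda,\bsk)|^2 e^{2(\lambda-\rho(\bsk))(x)}$ in the interior. Since $\delta_\bsk(x)\sim e^{2\rho(\bsk)(x)}$ as $x\to\infty$ in $\mathfrak{a}_+$, the product $|F|^2\delta_\bsk$ decays like $e^{2\lambda(x)}$, which is exponentially small because $\lambda_1,\ldots,\lambda_r<0$. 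Local integrability near the origin and along walls comes from \eqref{eqn:mcond1}, and $W$-invariance of both $F$ and $\delta_\bsk$ propagates finiteness from $\mathfrak{a}_+$ to $\mathfrak{a}$. Real-valuedness is immediate from \eqref{eqn:hgprop4}, since both $\lambda\in\Dkr\subset\mathbb R^r$ and $\bsk$ are real. The main obstacle I anticipate is the uniform extension of the temperedness bound to the walls of $\Cl(\mathfrak{a}_+)$: individual terms $\varPhi(w\lambda,\bsk;x)$ in \eqref{eqn:exptheta2} can develop singularities along these walls that cancel only within the full sum $F$, so a uniform bound on the closure requires either careful cancellation analysis or the shift-operator reduction outlined above.
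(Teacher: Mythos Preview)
Your core strategy coincides with the paper's: both start from the reduced expansion \eqref{eqn:exptheta2} of Theorem~\ref{thm:temperedtheta1} and then verify that each leading exponent $w\lambda-\rho(\bsk)$ ($w\in W^{\varTheta_i}$) has $\mathrm{Re}\,(w\lambda)(x)\le 0$ on $\Cl(\mathfrak a_+)$, exactly the computation you carry out. The paper, however, does not attempt any direct coefficient estimate; instead it writes the expansion in the form \eqref{eqn:hcs3}--\eqref{eqn:hcs4} (with polynomial-in-$x$ coefficients) and invokes the temperedness and $L^2$ criteria of Casselman--Mili\v{c}i\'c \cite[Corollary~7.2, Theorem~7.5]{CM}, which take precisely this leading-exponent information as input and deliver the global bound \eqref{eqn:tempered} on all of $\Cl(\mathfrak a_+)$, walls included. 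That single citation replaces the entire wall analysis you flag as the main obstacle.

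Your two proposed workarounds for the wall problem each have a genuine issue. The ``limit argument along unbounded faces'' is really the content of the Casselman--Mili\v{c}i\'c theory and is not short to reproduce by hand. The shift-operator reduction to $\mathcal K_+$ does not work as stated: for $\bsk'\in\mathcal K_+$ one has $\al'-|\be'|+1\ge 0$, hence $D_{\bsk'}(\varTheta_i)=\emptyset$ for every $i\ge 1$, so Theorem~\ref{thm:temperedtheta1} gives nothing for the shifted multiplicity; and the only available global estimate there, Lemma~\ref{lem:Fest}, bounds $|F(\lambda,\bsk';x)|$ by $e^{\max_w \mathrm{Re}\,w\lambda(x)}$, which for your $\lambda$ \emph{grows} exponentially on $\mathfrak a_+$ (the maximizing $w$ flips all signs) rather than giving a tempered bound. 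In addition, the multiplicative factor in \eqref{eq:ksymm} is $\prod_i(\cosh\frac{\beta_i}2)^{1-2\bsk_\l}$, which itself is not tempered unless $\bsk_\l=\tfrac12$, and $G^-(\bsk+\boldsymbol1)$ in \eqref{eq:Fshift} is a differential operator, so neither shift transports a tempered bound back to $\bsk$ in any obvious way.

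For square integrability the paper again uses \cite[Theorem~7.5]{CM} rather than your direct asymptotic estimate; for real-valuedness either \eqref{eqn:hgprop4} (your choice) or \eqref{eqn:fthetadeg3} (the paper's choice) is fine.
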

\begin{proof}
By Theorem~\ref{thm:temperedtheta1}, we have a convergent expansion on $\mathfrak{a}_+$ of the form
\begin{equation}\label{eqn:hcs4}
F(\lambda,\bsk;x)=
\!\sum_{\mu\,\in\, W^{\varTheta_i}\lambda}\sum_{\kappa\,\in\, Q_+}\!p_{\mu,\kappa}(\lambda,\bsk;x)\,
e^{(\mu-\rho(\bsk)-\kappa)(x)},
\end{equation}
where $p_{\mu,\kappa}$  are polynomials in $x$. 
The leading exponents of $F(\lambda,\bsk;x)$ for $\lambda\in \Dki+\sqrt{-1}\mathfrak{a}_{\varTheta_i}^*$ 
on $\mathfrak{a}_+$  are $w\lambda-\rho(\bsk)\,\,(w\in W^{\varTheta_i})$. 
For $w=(\varepsilon,\sigma)\in W^{\varTheta_i}$ we have
\begin{align*}
& \langle \mathrm{Re}\,
w\lambda,\beta_j\rangle=\langle w\lambda_{\mathfrak{a}(\varTheta_i)},\beta_j\rangle<0\,\,\text{ for any }\,
j\in\{\sigma(1),\dots,\sigma(i)\}, \\
& \langle \mathrm{Re}\,w\lambda,\beta_j\rangle=0\,\,\text{ for any }\,
j\not\in\{\sigma(1),\dots,\sigma(i)\}.
\end{align*}
Our results follows from the criterion of Casselman and Mili\v{c}i\'c (\cite[Corollary~7.2, Theorem~7.5]{CM}.
For $\lambda\in\Dkr$, $F(\lambda,\bsk)$ is real-valued by \eqref{eqn:fthetadeg3}. 
\end{proof}

In Section~\ref{sect:inversion} we shall show that $F(\lambda,\bsk)\,\,(\lambda\in\Dkr)$
 exhaust the square integrable hypergeometric 
functions after establishing the Plancherel theorem (see Corollary~\ref{cor:lthg}). 

Now let $\varPi_\bsk$ denote the set of real affine subspaces of $\acs$ of the form
\[
L=w(\lambda_{\mathfrak{a}(\varTheta_i)}+\sqrt{-1}\mathfrak{a}_{\varTheta_i}^*)
\]
for some $w\in W$, $i\in\{0,\ldots,r\}$ and $\lambda_{\mathfrak{a}(\varTheta_i)}\in \Dki$.
After \cite{Op99} we call each $L\in\varPi_\bsk$ a tempered residual subspace.
It is easy to see that two distinct tempered residual subspaces are disjoint.
For $i=0,\ldots,r$
let $W(\varTheta_i)$ denote
the stabilizer  of $\mathfrak{a}(\varTheta_i)$ in $W$.
Then $W(\varTheta_i)$ acts on 
$\mathfrak{a}_{\varTheta_i}^*$ 
and if $i<r$ then
it is identified with the Weyl group for the type $BC_{r-i}$ root system
\begin{equation}\label{eq:Ri}
\{\pm \beta_j,\,\pm 2\beta_j,\,\pm(\beta_p\pm \beta_q)\,;\,i<j\leq r,\,i< q<p\leq r\}.
\end{equation}
Put
\begin{align*}
\Cl(\mathfrak{a}_{\varTheta_i,+}^*)
&=\{\lambda\in\mathfrak{a}^*_{\varTheta_i}\,;\,\langle\lambda,\alpha\rangle\ge0 \,\,\text{ for any positive root $\alpha$ in \eqref{eq:Ri}}\} \\
& \simeq\{(\lambda_{i+1},\lambda_{i+2},
\dots,\lambda_r)\in\mathbb{R}^{r-i}\,;\,
0\le\lambda_{i+1}\le \lambda_{i+2}\le \cdots\le\lambda_r\}.
\end{align*}
In particular $\Cl(\mathfrak{a}_{\emptyset,+}^*)=\Cl(\mathfrak{a}_+^*)$
and $\Cl(\mathfrak{a}_{\B,+}^*)=\{0\}$.
The following is immediate.
\begin{lem}\label{lem:specrepr}
The set $\bigcup \varPi_\bsk \subset \acs$ 
is stable under $W$.
As a complete set of representatives for the $W$-orbits in $\bigcup \varPi_\bsk$ we can take
\[
\bigsqcup_{i=0}^r \bigl(\Dki+\sqrt{-1}\Cl(\mathfrak{a}_{\varTheta_i,+}^*)\bigr).
\]
\end{lem}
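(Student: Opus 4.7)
The $W$-stability of $\bigcup\varPi_\bsk$ is immediate from its definition: if $L=w(\lambda_{\mathfrak a(\varTheta_i)}+\sqrt{-1}\mathfrak a_{\varTheta_i}^*)\in\varPi_\bsk$ and $v\in W$, then $vL=(vw)(\lambda_{\mathfrak a(\varTheta_i)}+\sqrt{-1}\mathfrak a_{\varTheta_i}^*)$ is again of the required form.

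For the representative assertion I would argue existence and uniqueness separately. For \emph{existence}, take any $\lambda\in\bigcup\varPi_\bsk$ and write $\lambda=w(\mu+\sqrt{-1}\nu)$ with $\mu\in\Dki$ and $\nu\in\mathfrak a_{\varTheta_i}^*$. Replacing $\lambda$ by $w^{-1}\lambda$ we may assume $\lambda=\mu+\sqrt{-1}\nu$. Since $W(\varTheta_i)$ fixes $\mu\in\mathfrak a(\varTheta_i)^*$ and acts on $\mathfrak a_{\varTheta_i}^*$ as the Weyl group of the type $BC_{r-i}$ root system \eqref{eq:Ri}, an appropriate element of $W(\varTheta_i)$ sends $\nu$ into the closed Weyl chamber $\Cl(\mathfrak a_{\varTheta_i,+}^*)$ without disturbing $\mu$.

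For \emph{uniqueness}, suppose $\mu+\sqrt{-1}\nu\in\Dki+\sqrt{-1}\Cl(\mathfrak a_{\varTheta_i,+}^*)$ and $\mu'+\sqrt{-1}\nu'\in\Dkj+\sqrt{-1}\Cl(\mathfrak a_{\varTheta_j,+}^*)$ satisfy $w(\mu+\sqrt{-1}\nu)=\mu'+\sqrt{-1}\nu'$ for some $w\in W$. Separating real and imaginary parts (which $W$ preserves) yields $w\mu=\mu'$ and $w\nu=\nu'$. Writing $w=(\varepsilon,\tau)\in\mathbb Z_2^r\rtimes\mathfrak S_r$ and using that $\mu$ has negative coordinates in positions $1,\ldots,i$ and zero coordinates in positions $i+1,\ldots,r$ (and similarly for $\mu'$), a direct signed-permutation analysis of $w\mu=\mu'$ forces $i=j$, $\tau(\{1,\ldots,i\})=\{1,\ldots,i\}$, and the sign-change components of $\varepsilon$ on those positions to be trivial. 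The monotonicity condition built into \eqref{eqn:D} then gives $\mu=\mu'$, so the restriction of $w$ to $\mathfrak a_{\varTheta_i}^*$ is realised by an element of $W(\varTheta_i)$; the fundamental-domain property of $\Cl(\mathfrak a_{\varTheta_i,+}^*)$ then gives $\nu=\nu'$.

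The whole argument is combinatorial and the only place requiring care is the signed-permutation bookkeeping. A minor point worth flagging is the limiting case $\bsk_m=0$, where the monotonicity in \eqref{eqn:D} becomes merely non-strict: the permutation $\tau$ is then not uniquely pinned down, but the sorted tuples of non-zero coordinates of $\mu$ and $\mu'$ must still coincide, so the conclusion $\mu=\mu'$ is unaffected.
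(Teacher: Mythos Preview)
Your proof is correct. The paper offers no proof at all (it simply declares the lemma ``immediate''), so what you have written is precisely the natural verification the authors omitted. A couple of minor cosmetic points: once you have established that $\tau$ preserves $\{1,\ldots,i\}$ and that $\varepsilon$ is trivial on those indices, the element $w$ itself lies in $W(\varTheta_i)$ (the setwise stabilizer of $\mathfrak a(\varTheta_i)$), so you need not phrase it as ``the restriction of $w$ is realised by an element of $W(\varTheta_i)$''; and the disjointness implicit in the symbol $\bigsqcup$ is the $w=e$ case of your uniqueness argument, which you might note explicitly.
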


\begin{lem}\label{lem:separations}
For any $L\in\varPi_\bsk$ there exists a polynomial $p_L\in S(\mathfrak a_\C)$
that does not vanish on $L$ and identically vanishes
on each $L'\in \varPi_\bsk\,\setminus\,\{L\}$
with $\dim L'\le \dim L$.
\end{lem}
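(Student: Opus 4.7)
The plan is to build $p_L$ as a finite product of degree-one polynomials, one for each competitor $L'\in\varPi_\bsk\setminus\{L\}$ satisfying $\dim L'\le\dim L$. Finiteness of the product is clear: each $\Dki$ is a finite set and $W$ is finite, so $\varPi_\bsk$ itself is finite.

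First I would put every $N\in\varPi_\bsk$ into a canonical normal form. Writing $N=w(\lambda_{\mathfrak a(\varTheta_i)}+\sqrt{-1}\mathfrak a_{\varTheta_i}^*)$, set $a_N:=w\lambda_{\mathfrak a(\varTheta_i)}\in\mathfrak a^*$ and $V_N:=w\mathfrak a_{\varTheta_i}^*\subset\mathfrak a^*$, so $N=a_N+\sqrt{-1}V_N$. Crucially $a_N$ lies in the real orthogonal complement of $V_N$ (using $\mathfrak a(\varTheta_i)^*\perp\mathfrak a_{\varTheta_i}^*$ and $W$-invariance of the inner product), and consequently both $V_N$ and $a_N$ are intrinsic to $N$. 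A holomorphic polynomial vanishes on the real affine subspace $N$ if and only if it vanishes on its complex affine span $M_N:=a_N+V_{N,\C}$, since a polynomial on $V_{N,\C}$ vanishing on the totally real subspace $\sqrt{-1}V_N$ must vanish identically.

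The key reduction is the following: for $L'\ne L$ in $\varPi_\bsk$ with $\dim L'\le\dim L$, the complex affine span $M_L$ is not contained in $M_{L'}$. Indeed, $M_L\subset M_{L'}$ would force $V_L\subset V_{L'}$, hence $V_L=V_{L'}$ by the dimension hypothesis, and then $a_L-a_{L'}\in V_{L'}=V_L$; but $a_L$ and $a_{L'}$ both lie in $V_L^\perp$, so $a_L=a_{L'}$, giving $L=L'$, a contradiction. From $M_L\not\subset M_{L'}$, elementary linear algebra produces a degree-one polynomial $q_{L,L'}\in S(\mathfrak a_\C)$ that vanishes on $M_{L'}$ (hence on $L'$) but not identically on $M_L$ (hence not on $L$); then $p_L:=\prod_{L'}q_{L,L'}$ does the job. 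I expect the whole argument to be quick; the only mildly subtle ingredient, which I would flag as the main obstacle, is the canonical normalization $a_N\in V_N^\perp$, which is what makes the implication $L\ne L'\Rightarrow M_L\ne M_{L'}$ go through in the equal-dimension case.
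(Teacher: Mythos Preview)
There is a genuine gap. In this lemma ``does not vanish on $L$'' means \emph{nowhere zero on $L$}; this is exactly how the polynomial is used in the next lemma, where one divides by $p_L|_L$ to land in a Schwartz space. Your construction, however, only yields a $p_L$ that is \emph{not identically zero} on $L$. Each linear form $q_{L,L'}$ you produce vanishes on $M_{L'}$ and is ``not identically zero on $M_L$'', but a generic such $q_{L,L'}$ will vanish along a real hyperplane of $L$ (take $q(\lambda)=\langle\lambda-a_{L'},v\rangle$ with real $v\in V_{L'}^\perp$: on $L$ the real part is the constant $\langle a_L-a_{L'},v\rangle$, and if this happens to be zero then $q$ vanishes at $\lambda=a_L\in L$). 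So the implication ``not identically on $M_L$ (hence not on $L$)'' does not give what the lemma asserts.

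Your approach can be salvaged, but it needs one more structural fact: for $L\ne L'$ with $\dim L'\le\dim L$ one has $a_L\notin a_{L'}+V_{L'}$. Granting this, choose a real $v\in V_{L'}^\perp$ with $\langle a_L-a_{L'},v\rangle\ne 0$; then $q_{L,L'}(\lambda)=\langle\lambda-a_{L'},v\rangle$ has constant nonzero real part on $L$, hence is nowhere zero there, and the product works. The needed fact does hold, but it uses the concrete shape of the spaces in $\varPi_\bsk$: the set of coordinates at which $a_{L'}$ is nonzero has cardinality $i'\ge i$, while $a_L$ has exactly $i$ nonzero coordinates; if one of the ``fixed'' coordinates of $L'$ is a zero coordinate of $a_L$ then clearly $a_L\notin a_{L'}+V_{L'}$, and otherwise $i'=i$, the two fixed index sets coincide, so $V_L=V_{L'}$ and your own orthogonality step gives $a_L=a_{L'}$, contradicting $L\ne L'$.

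For comparison, the paper bypasses all this by writing down an explicit polynomial: after reducing by $W$-symmetry to $L=(\eta_1,\ldots,\eta_i)+\sqrt{-1}\mathfrak a_{\varTheta_i}^*$ it sets
\[
p_L(\lambda)=\biggl(\prod_{j=1}^i\frac{\prod_{\xi\in X}(\lambda_j^2-\xi^2)}{\lambda_j-\eta_j}\biggr)\prod_{j=i+1}^r\prod_{\xi\in X}(\lambda_j^2-\xi^2),
\qquad X=\{\xi\in\al-|\be|+1+2\mathbb N\bsk_m+2\mathbb N\,;\,\xi<0\}.
\]
The quadratic factors $\lambda_j^2-\xi^2$ make the nowhere-vanishing on $L$ transparent (for $j>i$ one has $\lambda_j\in\sqrt{-1}\mathbb R$, so $\lambda_j^2\le0<\xi^2$), and the vanishing on every competitor $L'$ can be read off directly from the definition of the sets $D_\bsk(\varTheta_{i'})$.
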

\begin{proof}
Without loss of generality
we may assume $L=(\eta_1,\ldots,\eta_i)+\sqrt{-1}\mathfrak{a}_{\varTheta_i}^*$
with $(\eta_1,\ldots,\eta_i)\in \Dki$.
Put $
X=\{\xi\in \al-|\be|+1+2\mathbb N\bsk_m+2\mathbb N\,;\,\xi<0\}$.
Then
\[
p_L(\lambda):=\biggl(\prod_{j=1}^i\frac{\prod_{\xi\in X}(\lambda_j^2-\xi^2)}{\lambda_j-\eta_j}\biggr)
\prod_{j=i+1}^r\prod_{\xi\in X}(\lambda_j^2-\xi^2)
\]
has the desired properties.
\end{proof}

For each $\lambda_{\mathfrak{a}(\varTheta_i)}\in\Dki$ ($0\le i\le r$)
let $\mathcal S_{\bsk, \lambda_{\mathfrak{a}(\varTheta_i)}}$
denote the Schwartz space 
on $\lambda_{\mathfrak{a}(\varTheta_i)}+\sqrt{-1}\mathfrak a_{\varTheta_i}^*\simeq\mathbb R^{r-i}$
(the space of rapidly decreasing smooth functions on $\lambda_{\mathfrak{a}(\varTheta_i)}+\sqrt{-1}\mathfrak a_{\varTheta_i}^*$).
By the Euclidean Fourier analysis one sees that
$\bigl\{ \phi|_{\lambda_{\mathfrak{a}(\varTheta_i)}+\sqrt{-1}\mathfrak a_{\varTheta_i}^*} \,;\, \phi\in\PW \bigr\}$
is a dense subspace of $\mathcal S_{\bsk, \lambda_{\mathfrak{a}(\varTheta_i)}}$
with respect to the Schwartz space topology.
Identifying $\mathcal S_{\bsk, \lambda_{\mathfrak{a}(\varTheta_i)}}$
with a space of functions on 
$\bigsqcup_{i=0}^r (\Dki+\sqrt{-1}\mathfrak{a}_{\varTheta_i}^*)$ put
\begin{equation}\label{eq:Schwartz}
\mathcal S_\bsk=\bigoplus_{i=0}^r \bigoplus_{\lambda_{\mathfrak{a}(\varTheta_i)}\in\Dki} \mathcal S_{\bsk, \lambda_{\mathfrak{a}(\varTheta_i)}}^{W(\varTheta_i)}.
\end{equation}
The next lemma will be used in Section \ref{sect:inversion}.
\begin{lem}\label{lem:PWinS}
$\bigl\{ \phi|_{\bigsqcup_{i=0}^r (\Dki+\sqrt{-1}\mathfrak{a}_{\varTheta_i}^*)} \,;\, \phi\in\PW^W \bigr\}$
is a dense subspace of $\mathcal S_\bsk$.
\end{lem}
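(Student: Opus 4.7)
The plan is to proceed by induction on the dimension of the tempered residual subspaces, using the separating polynomials from Lemma~\ref{lem:separations} to build corrections that vanish on previously matched components. First I would verify that restriction does land in $\mathcal S_\bsk$: for $\phi\in\PW^W$ and $L=\lambda_{\mathfrak a(\varTheta_i)}+\sqrt{-1}\mathfrak a_{\varTheta_i}^*$, the Paley--Wiener estimate \eqref{eq:PWcondition} applied along the horizontal shift by $\lambda_{\mathfrak a(\varTheta_i)}$ shows $\phi|_L$ is Schwartz, and $W(\varTheta_i)$-invariance holds because $W(\varTheta_i)$ pointwise fixes $\lambda_{\mathfrak a(\varTheta_i)}$ while acting on $\sqrt{-1}\mathfrak a_{\varTheta_i}^*$ as the type $BC_{r-i}$ Weyl group.

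Since each $\Dki$ is finite, $\mathcal S_\bsk$ is a finite direct sum. For density I would enumerate representatives $L_0,L_1,\ldots,L_N$ of the distinct $W$-orbits in $\varPi_\bsk$ ordered so that $\dim L_k$ is non-decreasing, writing $L_k=\lambda^{(k)}_{\mathfrak a(\varTheta_{i_k})}+\sqrt{-1}\mathfrak a_{\varTheta_{i_k}}^*$. Given $(\psi_{L_k})\in\mathcal S_\bsk$, the plan is to build an approximating sequence $\phi_n=\sum_{k=0}^N\chi_{n,k}\in\PW^W$ inductively, with $\chi_{n,k}|_{L_j}=0$ exactly for $j<k$ and $\chi_{n,k}|_{L_k}$ Schwartz-convergent to the residual $\psi_{L_k}-\sum_{j<k}\chi_{n,j}|_{L_k}$ as $n\to\infty$.

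To construct $\chi_{n,k}$ I would take the polynomial $p_{L_k}\in S(\mathfrak a_\C)$ from Lemma~\ref{lem:separations}. Its restriction $p_{L_k}|_{L_k}$ is proportional to $\prod_{j=i_k+1}^r\prod_{\xi\in X}(\lambda_j^2-\xi^2)$ (with $X$ as in the proof of that lemma), a nowhere-vanishing $W(\varTheta_{i_k})$-invariant polynomial on $L_k$. Dividing the residual by $|W(\varTheta_{i_k})|\,p_{L_k}|_{L_k}$ yields a $W(\varTheta_{i_k})$-invariant Schwartz target $F_{n,k}$ on $L_k$, and by classical Euclidean Paley--Wiener density applied to the splitting $\acs=\mathfrak a(\varTheta_{i_k})_\C^*\oplus\mathfrak a_{\varTheta_{i_k},\C}^*$ I would take $\tilde\chi_{n,k}(\lambda)=g(\lambda_{\mathfrak a(\varTheta_{i_k})})\,h_n(\lambda_{\mathfrak a_{\varTheta_{i_k}}})\in\PW$, where $g$ is a fixed PW function on $\mathfrak a(\varTheta_{i_k})_\C^*$ normalized by $g(\lambda^{(k)}_{\mathfrak a(\varTheta_{i_k})})=1$ and $h_n\in\PW(\mathfrak a_{\varTheta_{i_k},\C}^*)$ Schwartz-approximates $F_{n,k}$. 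Setting $\chi_{n,k}(\lambda):=\sum_{w\in W}p_{L_k}(w\lambda)\,\tilde\chi_{n,k}(w\lambda)\in\PW^W$, on any $L\in\varPi_\bsk$ with $\dim L\le\dim L_k$ the sum collapses by Lemma~\ref{lem:separations} to a sum over $\{w\in W:wL=L_k\}$, which is empty unless $L\in W\cdot L_k$; thus $\chi_{n,k}|_{L_j}=0$ for all $j<k$, while on $L_k$ the $W(\varTheta_{i_k})$-invariance of $p_{L_k}|_{L_k}$ reduces the sum to $|W(\varTheta_{i_k})|\,p_{L_k}|_{L_k}\,\tilde\chi_{n,k}|_{L_k}$, which Schwartz-approximates the residual as required.

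The main technical hurdle is the Euclidean Paley--Wiener density step producing $\tilde\chi_{n,k}$, but the tensor-product construction above reduces it to the classical theorem, with the possibly large factor $e^{H_C(\lambda^{(k)}_{\mathfrak a(\varTheta_{i_k})})}$ needed to compensate the real shift absorbed into the Paley--Wiener type of $\tilde\chi_{n,k}$.
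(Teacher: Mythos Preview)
Your approach is essentially the same as the paper's: both use the separating polynomials of Lemma~\ref{lem:separations} together with Euclidean Paley--Wiener density, producing a $W$-symmetrized element $\sum_{w\in W}p_L(w\lambda)\psi_n(w\lambda)$ that vanishes on every other tempered residual subspace of dimension $\le\dim L$. The paper organizes the induction in the opposite direction (increasing $i$, hence \emph{decreasing} dimension), so that the unwanted contributions of this element land on the \emph{higher}-dimensional components already handled by the inductive hypothesis; your low-to-high ordering with explicit residuals is the dual bookkeeping and works just as well. One small correction: on $L_k$ the surviving terms in $\sum_{w\in W}$ are indexed by the full stabilizer $W^{\lambda^{(k)}}$ of the base point, not by $W(\varTheta_{i_k})$; these differ exactly when $\bsk_m=0$ and some coordinates of $\lambda^{(k)}$ coincide, so the normalizing constant should be $|W^{\lambda^{(k)}}|$ (as in the paper) rather than $|W(\varTheta_{i_k})|$.
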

\begin{proof}
The method below is almost the same as Opdam's proof of \cite[Theorem~5.5]{Op99},
though the claim of the lemma has a subtle difference with what was shown in his proof.
First, clearly $\PW^W$ is identified with a subspace of $\mathcal S_\bsk$ by restriction.
We claim that
for $-1\le i\le r$, $\PW^W$ contains a dense subspace of
\[
\bigoplus_{j=0}^i \bigoplus_{\lambda_{\mathfrak{a}(\varTheta_j)}\in D_\bsk(\varTheta_j)} \mathcal S_{\bsk, \lambda_{\mathfrak{a}(\varTheta_j)}}^{W(\varTheta_j)}.
\]
This is trivial if $i=-1$.
We proceed by induction.
Let $i\ge0$ and assume the claim is true for $i-1$.
Take an arbitrary $\lambda_{\mathfrak{a}(\varTheta_i)}\in\Dki$ and
put $L=\lambda_{\mathfrak{a}(\varTheta_i)}+\sqrt{-1}\mathfrak a_{\varTheta_i}^*$.
Let $p_L$ be the polynomial in Lemma \ref{lem:separations}.
Take an arbitrary $\phi\in C^\infty_0(L)^{W(\varTheta_i)}$.
Then there exists a sequence $\{\psi_n\}\subset \PW$
such that
$\psi_n|_L\to (p_L|_L)^{-1}\phi$ in $\mathcal S_{\bsk, \lambda_{\mathfrak{a}(\varTheta_i)}}$.
Thus
\[
\bigl(|W(\varTheta_i)|^{-1}\sum _{w\in W(\varTheta_i)}p_L(w\lambda) \psi_n(w\lambda)\bigr)\bigr|_L\to \phi
\]
in $\mathcal S_{\bsk, \lambda_{\mathfrak{a}(\varTheta_i)}}$.
Note that
\begin{multline*}
\bigl(|W(\varTheta_i)|^{-1}\sum _{w\in W(\varTheta_i)}p_L(w\lambda) \psi_n(w\lambda)\bigr)\bigr|_L \\
=\bigl(|W^{\lambda(\mathfrak a_{\varTheta_i})}|^{-1}\sum _{w\in W}p_L(w\lambda) \psi_n(w\lambda)\bigr)\bigr|_L
\end{multline*}
and that $|W^{\lambda(\mathfrak a_{\varTheta_i})}|^{-1}\sum _{w\in W}p_L(w\lambda) \psi_n(w\lambda)$ belongs to
\begin{equation}\label{eq:Fkimage}
\PW^W \cap
\biggl(
\mathcal S_{\bsk, \lambda_{\mathfrak{a}(\varTheta_i)}}^{W(\varTheta_i)} \oplus
\bigoplus_{j=0}^{i-1} \bigoplus_{\lambda_{\mathfrak{a}(\varTheta_j)}\in D_\bsk(\varTheta_j)} \mathcal S_{\bsk, \lambda_{\mathfrak{a}(\varTheta_j)}}^{W(\varTheta_j)}
\biggr).
\end{equation}
Here $W^{\lambda_{\mathfrak{a}(\varTheta_i)}}$ is the stabilizer of $\lambda_{\mathfrak{a}(\varTheta_i)}$ in $W$.
Hence by the assumption
there exists a sequence $\{\tau_n\}$ in \eqref{eq:Fkimage}
such that $\tau_n\to \phi$ in $\mathcal S_\bsk$.
This readily proves the claim for $i$.
\end{proof}

\section{
A partial sum of Harish-Chandra series.
}
\label{sect:partial}

For a while we assume $\bsk\in\mathcal K_{\mathrm{reg}}$.
Let $\varXi$ denote the subset of $\B$ 
given by
\begin{equation}\label{eqn:xi}
\varXi=\{\alpha_j\,;\,1\leq j\leq r-1\}.
\end{equation}
Then $\langle\varXi\rangle$ is a root system of type $A_{r-1}$ and $W_\varXi=\mathfrak{S}_r$. 
We define
\begin{equation}\label{eqn:fxi}
{F}_\varXi(\lambda,\bsk)
=\!\sum_{s\,\in \,W_\varXi}\!{\cc}_\varXi(s\lambda,\bsk)\,\varPhi(s\lambda,\bsk). 
\end{equation}
If $r=1$, then $\varXi=\emptyset$. In this case we set $F_\varXi(\lambda,\bsk)=\varPhi(\lambda,\bsk)$, 
$\cc_\varXi(\lambda,\bsk)=1$, and $W_\varXi=\{1_W\}$. 
In this section we rewrite the inverse hypergeometric Fourier transform $\mathcal J_\bsk$
in terms of ${F}_\varXi(\lambda,\bsk)$
and study some properties of ${F}_\varXi(\lambda,\bsk)$.
By the definition, $F_\varXi(s\lambda,\bsk)=F_\varXi(\lambda,\bsk)$ for any $s\in W_\varXi$. 
Note 
\begin{align}\label{eqn:tcfxi}
& \tilde{\cc}_\varXi(\lambda,\bsk)   = 
\prod_{1\leq q<p\leq r}\frac{\varGamma\big(\frac12(\lambda_p-\lambda_q)\big)
}{\varGamma\big(\frac12(\lambda_p-\lambda_q+2\bsk_m)\big)} , \\
& \tilde{\cc}_\varXi(\rho(\bsk),\bsk)=\prod_{j=2}^r \frac{\varGamma(\bsk_m)}{\varGamma(j\bsk_m)}, \label{eqn:clxi}\\
& \tilde{\cc}^\varXi(\lambda,\bsk)   = 
\prod_{1\leq q<p\leq r}\frac{\varGamma\big(\frac12(\lambda_p+\lambda_q)\big)
}{\varGamma\big(\frac12(\lambda_p+\lambda_q+2\bsk_m)\big)} 
 \prod_{j=1}^r \tilde{\cc}_j(\lambda,\bsk). \label{eqn:cuxi}
\end{align}
An argument similar to the proof of \cite[Theorem 4.3.14]{Hec94} shows that the 
singularities of 
${F}_\varXi(\lambda,\bsk)$ are at most simple poles along hyperplanes of the 
form
\begin{equation}\label{eqn:fxihol}
\langle\lambda,\alpha^\vee\rangle=j\quad\text{for some }\,\,\alpha\in\Rp\,\setminus\, \langle\Xi\rangle \,\,
\text{ and } \,\,j=1,2,\dots.
\end{equation}
Such a result for $F_\varTheta$ with a general $\varTheta\subset\mathcal B$ is given by \cite[Theorem~3.5]{P}. 

\begin{rem}
If $\bsk_m=0$, then ${\cc}_\varXi(\lambda,\bsk)={1}/{r!}$, so 
\[
{F}_\varXi(\lambda,\bsk)=
\frac{1}{|W_\varXi|}\sum_{s\in W_\varXi}\!
\varPhi(s\lambda,\bsk).
\]
 Since $\varPhi(\lambda,\bsk)$ is the product of the Harish-Chandra series 
associated with the root systems $\{\pm \beta_i,\pm2\beta_i\}\,\, (1\leq i\leq r)$ in this case
(cf.~\cite[(2-4)]{Sh08}), the above 
statement on regularity is obvious.
\end{rem}

\begin{lem}\label{lem:FXiest}
Let $\bsk\in\mathcal K_{\mathrm{reg}}$ and $x\in\mathfrak a_+$.
For each $\eta$ in 
\begin{equation*}
Z_\varXi=\{\lambda\in\mathfrak a^*\,;\,
\langle\lambda,\alpha^\vee\rangle<1\quad\text{for any }\,\,\alpha\in\Rp\,\setminus\, \langle\Xi\rangle\}
\end{equation*}
there exist a neighborhood $T\subset Z_\varXi$ of $\eta$
and constants $C>0$, $n\in\mathbb N$ such that
\begin{equation*}
|F_\varXi(\lambda,\bsk;x)|\le C(1+||\lambda||)^n\quad\text{for }\lambda\in T+\sqrt{-1}\mathfrak a^*.
\end{equation*}
\end{lem}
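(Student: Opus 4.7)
The plan is to obtain a polynomial bound on $P(\lambda) F_\varXi(\lambda,\bsk;x)$ for a well-chosen $W_\varXi$-invariant polynomial $P$, and then to remove $P$ via a standard division argument that exploits the holomorphy of $F_\varXi$ itself on a neighborhood of $T+\sqrt{-1}\mathfrak a^*$.

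First I would fix the neighborhoods. The set $Z_\varXi$ is open and $W_\varXi$-invariant---the latter because $W_\varXi=\mathfrak S_r$ permutes $\Rp\setminus\langle\varXi\rangle=\Rp_s\cup\Rp_\l\cup\{\beta_p+\beta_q:p>q\}$ among themselves---so I can choose compact $W_\varXi$-invariant neighborhoods $T$ and $T^\#$ of $\eta$ with $T\subset\mathrm{int}(T^\#)\subset T^\#\subset Z_\varXi$. Next, by Lemma~\ref{lem:esthcs} applied to $T^\#$, there is a polynomial $p_0(\lambda)$ and constants $C,n$ such that $|p_0(\lambda)\varPhi(\lambda,\bsk;x)|\le C(1+||\lambda||)^n$ on $T^\#+\sqrt{-1}\mathfrak a^*$. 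Similarly, combining the explicit formula \eqref{eqn:tcfxi} for $\cc_\varXi$ with Lemma~\ref{lem:estcf}, I can find a polynomial $q_0(\lambda)$ such that $q_0(\lambda)\cc_\varXi(\lambda,\bsk)$ is polynomial-bounded on $T^\#+\sqrt{-1}\mathfrak a^*$.

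Setting $P(\lambda)=\prod_{s\in W_\varXi}p_0(s\lambda)\,q_0(s\lambda)$ produces a $W_\varXi$-invariant polynomial. Because $T^\#$ is $W_\varXi$-invariant, for each $s\in W_\varXi$ the summand $P(\lambda)\,\cc_\varXi(s\lambda,\bsk)\,\varPhi(s\lambda,\bsk;x)$ is polynomial-bounded on $T^\#+\sqrt{-1}\mathfrak a^*$, hence so is the total sum $P(\lambda)F_\varXi(\lambda,\bsk;x)$. By the regularity statement preceding the lemma (compare \cite[Theorem~4.3.14]{Hec94} and \cite[Theorem~3.5]{P}), $F_\varXi(\lambda,\bsk;x)$ is holomorphic on the entire $Z_\varXi+\sqrt{-1}\mathfrak a^*$; since $T\subset\mathrm{int}(T^\#)$, there exists a uniform $\delta>0$ such that for every $\lambda_0\in T+\sqrt{-1}\mathfrak a^*$ the Euclidean ball $B(\lambda_0,\delta)$ lies inside $T^\#+\sqrt{-1}\mathfrak a^*$. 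A Cauchy integral argument applied iteratively in coordinate directions along paths slightly perturbed to avoid the real codimension-$2$ zero set of $P$ converts the polynomial bound on $PF_\varXi$ into a polynomial bound on $F_\varXi$ alone on $T+\sqrt{-1}\mathfrak a^*$; the key point is that the integration radii are uniformly bounded by $\delta$, so $(1+||\lambda||)$ varies by at most a bounded factor along the contours.

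The main obstacle will be this last division step. Removing $P$ requires controlling $F_\varXi(\lambda_0)$ at points where $P(\lambda_0)$ is small; but since $F_\varXi$ is genuinely regular there---the poles of the individual terms $\cc_\varXi(s\lambda)\varPhi(s\lambda)$ cancel mutually---the Cauchy integral formula on fixed-radius polydiscs delivers uniform polynomial control. This is analogous to the technique in \cite[Ch.~IV]{Hel2} and \cite[Lemma~3.9]{Sh94}, which handles essentially the same issue in the classical setting of Harish-Chandra's $\cc$-function.
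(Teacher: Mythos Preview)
Your proposal is correct and follows essentially the same approach as the paper: first obtain a polynomial bound on $P(\lambda)F_\varXi(\lambda,\bsk;x)$ for a suitable $W_\varXi$-symmetrized polynomial $P$ built from the singular factors of $\varPhi$ and $\cc_\varXi$, then remove $P$ by a complex-analytic argument exploiting the holomorphy of $F_\varXi$ on $Z_\varXi+\sqrt{-1}\mathfrak a^*$. The paper carries out the removal step by writing $P$ as a product of linear forms $\prod_j(\lambda(v_j)-a_j)$ and stripping them off one at a time via the maximum modulus principle on a small disc in the direction $v_j$ (shrinking $T$ at each step), which is a cleaner realization of what you sketch as a ``Cauchy integral argument iteratively in coordinate directions''; your phrase about perturbing paths to avoid the zero set of $P$ is unnecessary once you work one linear factor at a time.
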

\begin{proof}
As a function in $\lambda$, $F_\varXi(\lambda,\bsk;x)$ is holomorphic on $Z_\varXi+\sqrt{-1}\mathfrak a^*$.
Take any compact neighborhood $T\subset Z_\varXi$ of $\eta$.
We assert that
there exists a sequence of pairs $\{(v_j, a_j)\}_{j=1}^k\subset (\mathfrak a\,\setminus\,\{0\})\times\mathbb R$
such that
\[
\psi(\lambda):=F_\varXi(\lambda,\bsk;x)\prod_{j=1}^k(\lambda(v_j)-a_j)
\]
satisfies
\begin{equation}\label{eq:FXiest}
\exists C>0\ \exists n\in\mathbb N\ \forall\lambda\in T+\sqrt{-1}\mathfrak a^*\quad
|\psi(\lambda)|\le C(1+||\lambda||)^{n}.
\end{equation}
Indeed, we can take a sufficiently large $N\in\mathbb N$ so that
\[
\prod_{1\leq q<p\leq r}\frac{\varGamma\big(\frac12(\lambda_p-\lambda_q)+N\big)
}{\varGamma\big(\frac12(\lambda_p-\lambda_q+2\bsk_m)\big)}
\]
is regular on $W_\varXi T+\sqrt{-1}\mathfrak a^*$.
Put
\[
P(\lambda)=p(\lambda)\,
\tilde\cc_\varXi(\lambda,\bsk)^{-1}\!\prod_{1\leq q<p\leq r}\frac{\varGamma\big(\frac12(\lambda_p-\lambda_q)+N\big)
}{\varGamma\big(\frac12(\lambda_p-\lambda_q+2\bsk_m)\big)}
\]
where $p(\lambda)$ is the polynomial in Lemma \ref{lem:esthcs} for the compact set $W_\varXi T\subset\mathfrak a^*$.
Choose $\{(v_j, a_j)\}_{j=1}^k$ so that
\[
\prod_{j=1}^k(\lambda(v_j)-a_j)=\prod_{s\in W_\varXi} P(s\lambda).
\]
Then the assertion follows from Lemma \ref{lem:esthcs}, Lemma \ref{lem:estcf} and \eqref{eqn:fxi}.

If $k=0$ then we are done. So assume $k>0$.
Let us show that
even if $\psi(\lambda)$ is replaced with
\[
\phi(\lambda):=\frac{\psi(\lambda)}{\lambda(v_1)-a_1}=F_\varXi(\lambda,\bsk;x)\prod_{j=2}^{k}(\lambda(v_j)-a_j),
\]
\eqref{eq:FXiest} still holds for a smaller $T$.
Since this is obvious when $\eta(v_1)\ne a_1$,
we assume $\eta(v_1)= a_1$.
We identify $v_1$ with a vector in $\mathfrak a^*$
and take a small $\varepsilon>0$ so that
\[
T':=\eta+\{\zeta v_1\,;\,-\varepsilon\le\zeta\le\varepsilon\}+\{\lambda\in\mathfrak a^*\,;\,\langle\lambda,v_1\rangle=0, ||\lambda||\le\varepsilon\}\subset T.
\]
Now let $\lambda\in T'+\sqrt{-1}\mathfrak a^*$.
Suppose first $|\lambda(v_1)-a_1|\ge\varepsilon||v_1||^2$.
Then we have $|\phi(\lambda)|\le\varepsilon^{-1}||v_1||^{-2}|\psi(\lambda)|\le \varepsilon^{-1}||v_1||^{-2}C(1+||\lambda||)^n$.
Next, suppose $|\lambda(v_1)-a_1|\le\varepsilon||v_1||^2$
and put 
\[
\lambda_\zeta=\lambda+\biggl(\zeta-\frac{\lambda(v_1)-a_1}{||v_1||^2}\biggr)v_1
\]
for $\zeta\in\C$ with $|\zeta|\le\varepsilon$.
Then $||\lambda_\zeta||\le||\lambda||+2\varepsilon||v_1||$,
$\lambda_\zeta(v_1)-a_1=\zeta||v_1||^2$
and $\lambda_\zeta\in T'+\sqrt{-1}\mathfrak a^*$.
Hence by the maximum modulus principle
\[
\begin{aligned}
|\phi(\lambda)|&\le \max_{|\zeta|\le\varepsilon}|\phi(\lambda_\zeta)|=\max_{|\zeta|=\varepsilon}|\phi(\lambda_\zeta)|
=\varepsilon^{-1}||v_1||^{-2}\max_{|\zeta|=\varepsilon}|\psi(\lambda_\zeta)|\\
&\le \varepsilon^{-1}||v_1||^{-2} C(1+2\varepsilon||v_1||+||\lambda||)^n.
\end{aligned}
\]
Thus our claim is shown. Repeating the same argument
we can remove all the factors $(\lambda(v_j)-a_j)$ from $\psi(\lambda)$.
\end{proof}

\begin{prop}[Inversion formula, second form]\label{prop:second}
Let $\bsk\in\mathcal{K}'_1$.
If $r>1$ then we further assume $\bsk_m>0$.
Let $\phi\in\PW^W$ and $x\in\mathfrak a_+$.
Then
\begin{multline*}
\mathcal{J}_\bsk\,\phi(x)-\mathcal{J}_{\bsk,\emptyset}\,\phi(x)\\
=\frac{-1}{(2\pi\sqrt{-1})^{r-1}(r-1)!}
\sum_{\xi\in \Dko}
\int_{\check{\xi}+\sqrt{-1}\mathfrak{a}_{\varTheta_1}^*}\!
\left.(\phi(\lambda){F}_\varXi(\lambda,\bsk;x))\right|_{\lambda_1=\xi}\\
\times \Res_{\lambda_1=\xi}({\cc}_\varXi(\lambda,\bsk)^{-1}\cc(-\lambda,\bsk)^{-1})
\,d\lambda_{\mathfrak{a}_{\varTheta_1}}.
\end{multline*}
Here $d\lambda_{\mathfrak{a}_{\varTheta_1}}=d\lambda_2d\lambda_3\cdots d\lambda_r$ and
$\check{\xi}:=(\xi,\xi,\dots,\xi)\in\mathbb R^{r-1}\simeq\mathfrak{a}^*_{\varTheta_1}$
for $\xi\in \Dko=\{\xi\in\mathbb{R}\,;\,\xi<0,\,\xi-\al+|\be|-1\in 2\mathbb{N}\}$.
 (If $r=1$ then $\check{\xi}=0$ for each $\xi\in \Dko$ and
 $d\lambda_{\mathfrak{a}_{\varTheta_1}}$ is the counting measure on $\check{\xi}+\sqrt{-1}\mathfrak{a}_{\varTheta_1}^*=\{0\}$.) 
\end{prop}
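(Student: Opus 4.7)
The plan is to symmetrize the integrand of $\mathcal{J}_\bsk$ under $W_\varXi\simeq\mathfrak{S}_r$ so as to replace $\varPhi$ with $F_\varXi$, and then shift the contour from $\eta+\sqrt{-1}\mathfrak{a}^*$ down to $\sqrt{-1}\mathfrak{a}^*$, collecting residues along the hyperplanes $\lambda_i=\xi\in\Dko$. I take $\eta=c(1,\ldots,1)\in-\Cl(\mathfrak{a}_+^*)$ with $c$ sufficiently negative that $\cc(-\lambda,\bsk)^{-1}$ is regular on $\eta+\sqrt{-1}\mathfrak{a}^*$; crucially, this $\eta$ is $W_\varXi$-invariant.

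For the symmetrization, the $W$-invariance of $\phi$ and the $W_\varXi$-invariance of $\eta$ make $\lambda\mapsto s\lambda$ ($s\in W_\varXi$) a symmetry of the defining integral of $\mathcal{J}_\bsk\phi$; averaging over $s$ then reduces the problem to simplifying $\frac{1}{r!}\sum_s\varPhi(s\lambda,\bsk;x)\cc(-s\lambda,\bsk)^{-1}$. Decomposing $\cc=\cc_\varXi\cc^\varXi$ and using the $W_\varXi$-invariance of $\cc^\varXi$ (whose roots $\beta_i,2\beta_i,\beta_p+\beta_q$ are stable under $\mathfrak{S}_r$) together with the pair-symmetric identity
$$\cc_\varXi(s\lambda,\bsk)\cc_\varXi(-s\lambda,\bsk)=\cc_\varXi(\lambda,\bsk)\cc_\varXi(-\lambda,\bsk)\quad(s\in W_\varXi),$$
obtained by inspecting \eqref{eqn:tcfxi} on each unordered pair $\{p,q\}$, this sum collapses to $F_\varXi(\lambda,\bsk;x)/(r!\cc_\varXi(\lambda,\bsk)\cc(-\lambda,\bsk))$. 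The same calculation with $\eta=0$ yields the analogous representation of $\mathcal{J}_{\bsk,\emptyset}\phi$, valid because the poles of $\cc_\varXi^{-1}$ at $\lambda_p-\lambda_q=-2\bsk_m-2n$ stay off $\sqrt{-1}\mathfrak{a}^*$ when $\bsk_m>0$.

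Next I homotope $c\check{1}+\sqrt{-1}\mathfrak{a}^*$ by $c\colon c_0\uparrow 0$ and identify which hyperplanes of $H:=\phi F_\varXi\cc_\varXi^{-1}\cc(-\cdot)^{-1}$ are crossed. Under $\bsk_m>0$ (or $r=1$), inspecting \eqref{eqn:tcf} and \eqref{eqn:fxihol} shows that the only poles met are $\lambda_i=\xi$ for $\xi\in\Dko$ and $i=1,\ldots,r$, coming from $\varGamma(\tfrac{1}{2}(-\lambda_i+\al-|\be|+1))$ inside $\cc(-\lambda,\bsk)^{-1}$; each is crossed at the instant $c=\xi$, when the remaining coordinates lie on the symmetric slice $\check\xi+\sqrt{-1}\mathfrak{a}_{\varTheta_1}^*$ (after relabelling $i\leftrightarrow 1$). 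A one-variable rectangle argument in the $\lambda_i$-slice---whose horizontal pieces vanish by Lemmas~\ref{lem:esthcs}, \ref{lem:FXiest} and~\ref{lem:estcf} combined with the Paley--Wiener decay of $\phi$---adds $+2\pi\sqrt{-1}\,\Res_{\lambda_i=\xi}H$ (times $(2\pi\sqrt{-1})^{-r}$ and the transverse integral) to the integral $I(c):=(2\pi\sqrt{-1})^{-r}\int H\,d\lambda$ at each crossing. The $W_\varXi$-invariance of $H$ makes the $r$ contributions equal, and the factorisation
$$\Res_{\lambda_1=\xi}H=(\phi F_\varXi)|_{\lambda_1=\xi}\,\Res_{\lambda_1=\xi}(\cc_\varXi^{-1}\cc(-\cdot)^{-1})$$
(only $\cc(-\cdot)^{-1}$ being singular in $\lambda_1$ at $\xi$) combined with the prefactor $-r/r!=-1/(r-1)!$ from $\mathcal{J}_\bsk-\mathcal{J}_{\bsk,\emptyset}=\frac{1}{r!}(I(c_0)-I(0))$ produces the asserted formula; the $r=1$ case is uniform with $W_\varXi=\{1\}$, $F_\varXi=\varPhi$ and $\cc_\varXi=1$.

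The principal technical obstacle is that the symmetric residue contour $\check\xi+\sqrt{-1}\mathfrak{a}_{\varTheta_1}^*$ passes through the loci $\lambda_p=\xi$ ($p>1$), where $\cc(-\lambda,\bsk)^{-1}$ has further poles independent of $\lambda_1$. Crucially, $\cc_\varXi(\lambda,\bsk)^{-1}$ carries a compensating simple zero along $\lambda_p=\lambda_q$ (from $1/\varGamma(\tfrac{1}{2}(-\lambda_p+\lambda_q))$), so $\Res_{\lambda_1=\xi}(\cc_\varXi^{-1}\cc(-\cdot)^{-1})$ is in fact regular at $\lambda_p=\xi$ for $p>1$ and the integration over the symmetric slice is well-defined. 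This cancellation is exactly what makes the symmetrization step indispensable and explains why the formula naturally involves $F_\varXi$ rather than $\varPhi$. Once this is in place, controlling the horizontal rectangle pieces at infinity is routine via the uniform polynomial bounds of Lemmas~\ref{lem:estcf} and~\ref{lem:FXiest} together with the super-polynomial decay of $\phi\in\PW^W$, and the hypothesis $\bsk_m>0$ (for $r>1$) keeps the remaining singular hyperplanes a positive distance from the swept region.
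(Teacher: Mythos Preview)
Your proof is correct and uses the same essential ingredients as the paper, but in a different order. You symmetrize under $W_\varXi$ \emph{first}, replacing $\varPhi\,\cc(-\cdot)^{-1}$ by $\tfrac1{r!}F_\varXi\,\cc_\varXi^{-1}\cc(-\cdot)^{-1}$, and then shift the contour along the diagonal $c\check1\to 0$. The paper instead keeps the unsymmetrized integrand $\phi\,\varPhi\,\cc(-\cdot)^{-1}$, shifts the real part one coordinate at a time through an explicit staircase sequence $\eta^{(\ell,j)}\in-\Cl(\mathfrak a_+^*)$ so that exactly one hyperplane $\lambda_j=\xi^{(\ell)}$ is crossed at each step, then moves each individual residue integral to the symmetric slice $\check\xi^{(\ell)}+\sqrt{-1}\mathbb R^{r-1}$ (using the zero of $\tilde{\cc}_{\beta_i\pm\beta_j}(-\lambda)^{-1}$ already inside $\cc(-\lambda)^{-1}$ rather than your $\cc_\varXi^{-1}$-zero; both cancellations are actually present in the symmetrized integrand), and only afterwards symmetrizes via \eqref{eq:WXiinvc} to assemble $F_\varXi$. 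Your route explains more transparently \emph{why} $F_\varXi$ appears; the paper's coordinate-wise shift buys a cleaner justification at the crossing, since in your diagonal homotopy all $r$ hyperplanes $\lambda_i=\xi$ are met simultaneously at $c=\xi$, and making the ``one-variable rectangle argument in the $\lambda_i$-slice'' rigorous there still amounts to moving coordinates one at a time across a strip $[\xi-\varepsilon,\xi+\varepsilon]$ and then letting $\varepsilon\to0$ using the regularity of the residue on the symmetric slice---which is precisely the paper's manoeuvre localized near each $\xi$.
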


\begin{proof}
The singularities of the function 
$\lambda\mapsto\cc(-\lambda,\bsk)^{-1}$ in the region $\{\lambda\in\acs\,;\,
\text{Re}\,\lambda\in-\Cl(\mathfrak{a}_+^*)\}$ are precisely 
the simple poles along
$\lambda_j=\xi$ for $1\leq j\leq r$ and $\xi\in \Dko$
(cf.~\eqref{eqn:tcfbc1}). 
Let $\{\xi^{(1)}<\xi^{(2)}<\cdots<\xi^{(k)}=0\}=\Dko\cup\{0\}$.
With a sufficiently small $\varepsilon>0$ 
put
\begin{align*}
\eta^{(1,0)}&=(\xi^{(1)}-\varepsilon,\,\xi^{(1)}-\varepsilon,\,\xi^{(1)}-\varepsilon,\ldots,\,\xi^{(1)}-\varepsilon),\\
\eta^{(1,1)}&=(\xi^{(2)}-\varepsilon,\,\xi^{(1)}-\varepsilon,\,\xi^{(1)}-\varepsilon,\ldots,\,\xi^{(1)}-\varepsilon),\\
\eta^{(1,2)}&=(\xi^{(2)}-\varepsilon,\,\xi^{(2)}-\varepsilon,\,\xi^{(1)}-\varepsilon,\ldots,\,\xi^{(1)}-\varepsilon),\\
&\  \,\vdots\\
\eta^{(1,r)}=\eta^{(2,0)}&=(\xi^{(2)}-\varepsilon,\,\xi^{(2)}-\varepsilon,\,\xi^{(2)}-\varepsilon,\ldots,\,\xi^{(2)}-\varepsilon),\\
\eta^{(2,1)}&=(\xi^{(3)}-\varepsilon,\,\xi^{(2)}-\varepsilon,\,\xi^{(2)}-\varepsilon,\ldots,\,\xi^{(2)}-\varepsilon),\\
&\ \,\vdots\\
\eta^{(k-1,r)}=\eta^{(k,0)}&=(\xi^{(k)}-\varepsilon,\,\xi^{(k)}-\varepsilon,\,\xi^{(k)}-\varepsilon,\ldots,\,\xi^{(k)}-\varepsilon)\\
&=(-\varepsilon,\,-\varepsilon,\,-\varepsilon,\ldots,\,-\varepsilon).
\end{align*}
All these points belong to $-\Cl(\mathfrak a_+^*)$.
Define
\[
I^{(\l,j)}=\int_{\eta^{(\l,j)}+\sqrt{-1}\mathfrak{a}^*}\phi(\lambda)\,\varPhi(\lambda,\bsk;x)\,\cc(-\lambda,\bsk)^{-1}
d\mu(\lambda).
\]
Then $I^{(1,0)}=\mathcal{J}_\bsk\,\phi(x)$ and $I^{(k,0)}=\mathcal{J}_{\bsk,\emptyset}\,\phi(x)$.

Let $1\le\l<k$ and $1\le j \le r$.
By Cauchy's residue theorem, 
$I^{(\l,j)}-I^{(\l,j-1)}$ is
\begin{align}\label{eqn:res1}
\frac{1}{(2\pi\sqrt{-1})^{r-1}}
\int_{{\xi}^{(\l,j)}+\sqrt{-1}\mathbb{R}^{r-1}} & (\phi(\lambda)\,\varPhi(\lambda,\bsk;x))|_{\lambda_j=\xi^{(\l)}} \\
 \times \,\Res_{\lambda_j=\xi^{(\l)}}&(\cc(-\lambda,\bsk)^{-1})\,
d\lambda_{\{1,2,\dots,r\}\setminus\{j\}}, \notag
\end{align}
where
\[
{\xi}^{(\l,j)}
=\bigl((\xi^{(\l+1)}-\varepsilon,\ldots,\xi^{(\l+1)}-\varepsilon),\,
(\xi^{(\l)}-\varepsilon,\ldots,\xi^{(\l)}-\varepsilon)\bigr)
\in \mathbb R^{j-1}\times \mathbb R^{r-j}
\]
and $d\lambda_{\{1,2,\dots,r\}\setminus\{j\}}=\prod_{i\not=j}d\lambda_i$. 
The integrand as a function of $(\lambda_i)_{i\,\in\, \{1,2,\dots,r\}\setminus\{j\}}$ 
is regular near
$\check{\xi}^{(\l)}+\sqrt{-1}\mathbb{R}^{r-1}$
because the pole of  \,$\tilde{\cc}_i(-\lambda,\bsk)^{-1}\bigr|_{\lambda_j=\xi^{(\l)}}$ along $\lambda_i=\xi^{(\l)}\,\,(i\not=j)$ is canceled by a zero locus of 
$\tilde{\cc}_\alpha(-\lambda,\bsk)^{-1}\bigr|_{\lambda_j=\xi^{(\l)}}$
for either $\alpha=\beta_i-\beta_j$ or $\beta_j-\beta_i$ in $ \Rp$
(recall the assumption that $\bsk_m>0$ if $r>1$). 
Hence we can shift the domain of integration in \eqref{eqn:res1}
from ${\xi}^{(\l,j)}+\sqrt{-1}\mathbb{R}^{r-1}$ to $\check{\xi}^{(\l)}+\sqrt{-1}\mathbb{R}^{r-1}$.

Now the function $\lambda\mapsto {\cc}_\varXi(\lambda,\bsk)^{-1}$ is regular on
\begin{equation}\label{eqn:clnegch}
\{\lambda\in\mathbb{C}^r\,;\,\mathrm{Re}\,\lambda_1\leq \mathrm{Re}\,\lambda_2\leq \cdots 
\leq \mathrm{Re}\,\lambda_r\leq 0\}.
\end{equation}
Also, \eqref{eqn:tcf} and \eqref{eqn:tcfxi} yield
\begin{equation}\label{eq:WXiinvc}
{\cc}_\varXi(s\lambda,\bsk)^{-1}\cc(-s\lambda,\bsk)^{-1}
={\cc}_\varXi(\lambda,\bsk)^{-1}\cc(-\lambda,\bsk)^{-1}\quad \text{for }
s\in W_\varXi.
\end{equation}
Therefore, by \eqref{eqn:fxi} and changes of variables,
$I^{(\l+1,0)}-I^{(\l,0)}=\sum_{j=1}^{r}(I^{(\l,j)}-I^{(\l,j-1)})$ reduces to
\begin{align}\label{eqn:int1}
\frac{1}{(2\pi\sqrt{-1})^{r-1}(r-1)!} & 
\int_{\check{\xi}^{(\l)}+\sqrt{-1}\mathfrak{a}_{\varTheta_1}^*}\!
\left.(\phi(\lambda){F}_\varXi(\lambda,\bsk;x))\right|_{\lambda_1=\xi^{(\l)}} \\
& \quad \times \Res_{\lambda_1=\xi^{(\l)}}({\cc}_\varXi(\lambda,\bsk)^{-1}\cc(-\lambda,\bsk)^{-1})
\,d\lambda_{\mathfrak{a}_{\varTheta_1}}.  \notag
\end{align}
This proves the proposition.
\end{proof}

Suppose $\bsk\in\mathcal{K}'_1$, $0\le i\le r$ and $\xi\in\Dki$.
For a meromorphic function $\phi(\lambda)$ on $\acs$,
we define its successive restriction
\begin{equation}\label{eq:suc_res}
\sigma_{\xi}\bigl(\phi(\lambda)\bigr)=\phi(\lambda)\bigr|_{\lambda_1=\xi_1}\bigr|_{\lambda_2=\xi_2}\cdots
\bigr|_{\lambda_i=\xi_i}
\end{equation}
when the right hand side is justified.
(If $i=0$ and $\xi=0$ then $\sigma_{\xi}\bigl(\phi(\lambda)\bigr)=\phi(\lambda)$.)

\begin{lem}\label{lem:crestr}
Both
$\sigma_{\xi}\bigl(\cc^\varXi(\lambda,\bsk)^{-1}\bigr)$
and $\sigma_{\xi}\bigl(\cc^\varXi(\lambda,\bsk)\bigr)$ 
are well-defined meromorphic functions on $\mathfrak a_{\varTheta_i,\C}^*$.
More precisely, the latter can be written as
\begin{multline*}
\sigma_{\xi}\bigl(\cc^\varXi(\lambda,\bsk)\bigr)
=a(\xi,\bsk;\lambda_{\mathfrak a_{\varTheta_i}})
\prod_{1\leq q\le i<p\leq r}
\frac{\varGamma\big(\frac12(\lambda_p+\xi_q)\big)}
{\varGamma\big(\frac12(\lambda_p+\xi_q+2\bsk_m)\big)}\\
\times\prod_{i<q<p\leq r}
\frac{\varGamma\big(\frac12(\lambda_p+\lambda_q)\big)}
{\varGamma\big(\frac12(\lambda_p+\lambda_q+2\bsk_m)\big)}
\prod_{j=i+1}^r\frac{\varGamma\big(\lambda_j\big)}{\varGamma\big(\tfrac12(\lambda_{j}+\al-|\be|+1)\big)}
\end{multline*}
with a meromorphic function $a(\xi,\bsk;\lambda_{\mathfrak a_{\varTheta_i}})$ that is regular and non-vanishing on a neighborhood of
the region $\{\lambda_{\mathfrak a_{\varTheta_i}}=(\lambda_{i+1},\ldots,\lambda_r)\in\C^{r-i}\,;\,
\al-|\be|+1\le \operatorname{Re}\lambda_j\le0\ (i< j\le r)\}$.
\end{lem}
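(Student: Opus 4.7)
The plan is to derive the formula by a direct calculation from the product expression \eqref{eqn:cuxi} of $\tilde{\cc}^\varXi(\lambda,\bsk)$. First I would partition the factors according to the indices involved: group $A$ collects $\prod_{1\le q<p\le i}\frac{\varGamma(\frac12(\lambda_p+\lambda_q))}{\varGamma(\frac12(\lambda_p+\lambda_q+2\bsk_m))}$ together with $\prod_{j=1}^i\tilde{\cc}_j(\lambda_j,\bsk)$ (all depending only on $\lambda_1,\ldots,\lambda_i$); group $B$ is the \emph{mixed} product $\prod_{1\le q\le i<p\le r}\frac{\varGamma(\frac12(\lambda_p+\lambda_q))}{\varGamma(\frac12(\lambda_p+\lambda_q+2\bsk_m))}$; group $C$ consists of $\prod_{i<q<p\le r}\frac{\varGamma(\frac12(\lambda_p+\lambda_q))}{\varGamma(\frac12(\lambda_p+\lambda_q+2\bsk_m))}$ and $\prod_{j=i+1}^r\tilde{\cc}_j(\lambda_j,\bsk)$. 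Applying $\sigma_\xi$ leaves $C$ untouched, collapses $A$ to a scalar $A(\xi;\bsk)$, and turns $B$ into the first displayed product in the formula. For the $\tilde{\cc}_j$ with $j>i$ appearing in $C$, I would use the second line of \eqref{eqn:tcfbc1} to split off $\frac{\varGamma(\lambda_j)}{\varGamma(\frac12(\lambda_j+\al-|\be|+1))}$ as the third displayed product, while the leftover factor $\frac{2^{-\al+\be-\lambda_j+1}\sqrt\pi}{\varGamma(\frac12(\lambda_j+\al+|\be|+1))}$ is combined with $A(\xi;\bsk)/\tilde{\cc}^\varXi(\rho(\bsk),\bsk)$ to form $a(\xi,\bsk;\lambda_{\mathfrak a_{\varTheta_i}})$.

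To verify that $a$ is regular and non-vanishing on a neighborhood of the claimed region, I would observe that its $\lambda_{\mathfrak a_{\varTheta_i}}$-dependence lies entirely in $\prod_{j=i+1}^r\frac{2^{-\al+\be-\lambda_j+1}\sqrt\pi}{\varGamma(\frac12(\lambda_j+\al+|\be|+1))}$. The exponential is entire and nowhere zero. For $\operatorname{Re}\lambda_j\in[\al-|\be|+1,0]$ the quantity $\operatorname{Re}\frac12(\lambda_j+\al+|\be|+1)$ lies in $[\al+1,\tfrac12(\al+|\be|+1)]$, and both endpoints are strictly positive thanks to $\al>-1$ from \eqref{eqn:mcond2b}. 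Consequently the argument never hits a non-positive integer, so $\varGamma$ is finite there (and $\varGamma$ never vanishes); the same holds on any sufficiently small complex neighborhood of the region. Well-definedness of $\sigma_\xi$ on both $\cc^\varXi(\lambda,\bsk)^{\pm1}$ follows because each Gamma factor depends on a single linear functional of $\lambda$, so each successive substitution $\lambda_q=\xi_q$ is a point evaluation returning a definite value in $\C\cup\{\infty\}$; the product (or its reciprocal) is then a genuine meromorphic function of $\lambda_{i+1},\ldots,\lambda_r$.

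The main technical chore will be checking that the scalar $A(\xi;\bsk)/\tilde{\cc}^\varXi(\rho(\bsk),\bsk)$ absorbed into $a$ is itself finite and nonzero for $\xi\in \Dki$ and $\bsk\in\mathcal K'_1$ (with $\bsk_m>0$ if $r>1$, as in the hypothesis of Proposition~\ref{prop:second}). This amounts to pairing the potential poles of the $\varGamma$'s in the numerator of $A(\xi;\bsk)$ against the zeros of the reciprocal $\varGamma$'s in the denominator, using the explicit arithmetic form of $\xi_j$ prescribed by \eqref{eqn:D}. Because every Gamma argument is a single linear functional of $(\xi,\bsk)$, this reduces to a manageable factor-by-factor bookkeeping rather than a genuinely novel obstacle. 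Once it is verified, the identification of $\sigma_\xi(\cc^\varXi(\lambda,\bsk))$ with the claimed formula is immediate, and the analogous claim for $\cc^\varXi(\lambda,\bsk)^{-1}$ follows by inverting the same decomposition.
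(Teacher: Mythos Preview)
Your plan is sound and reaches the same formula, but its organization differs from the paper's: where you do a single direct decomposition into groups $A$, $B$, $C$ and defer the verification that $A(\xi;\bsk)$ is finite and nonzero, the paper runs an induction on $i$, at step $i\to i+1$ absorbing into the function $a$ precisely those Gamma factors newly fixed by $\lambda_{i+1}=\xi_{i+1}$ and checking they contribute a nonzero constant.

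Two points need sharpening. First, the phrase ``factor-by-factor bookkeeping'' undersells the crux. Individual constituents of $A(\xi;\bsk)$ can genuinely be $0$ or $\infty$: for $j\ge2$ one may have $\xi_j\in -\mathbb N$ while $\tfrac12(\xi_j+\al-|\be|+1)\notin -\mathbb N$, giving a pole in $\tilde\cc_j(\xi_j,\bsk)$ that is compensated only by a zero coming from a \emph{different} factor, namely $1/\varGamma(\tfrac12(\xi_j+\xi_{j-1}+2\bsk_m))$. The paper resolves this by a telescoping regrouping, pairing $\varGamma(\lambda_{i+1})$ with $\varGamma(\tfrac12(\lambda_{i+1}+\xi_i+2\bsk_m))^{-1}$, then $\varGamma(\tfrac12(\lambda_{i+1}+\xi_1))$ with $\varGamma(\tfrac12(\lambda_{i+1}+\al-|\be|+1))^{-1}$, and $\varGamma(\tfrac12(\lambda_{i+1}+\xi_{q+1}))$ with $\varGamma(\tfrac12(\lambda_{i+1}+\xi_q+2\bsk_m))^{-1}$ for $1\le q<i$; each pair is regular and non-vanishing at $\lambda_{i+1}=\xi_{i+1}$ by the equivalences forced by \eqref{eqn:D}. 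Your ``pairing poles against zeros'' is the right instinct, but the specific pairing is the content of the proof and is not automatic. This same telescoping is also what justifies well-definedness of $\sigma_\xi$ at each intermediate step; your separate well-definedness paragraph is not yet complete, since one cannot conclude the product is meromorphic in $\lambda_{i+1},\ldots,\lambda_r$ merely because each factor lands in $\C\cup\{\infty\}$ --- the $0\cdot\infty$ cases must be ruled out.

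Second, you import the hypothesis $\bsk_m>0$ from Proposition~\ref{prop:second}, but the lemma is stated (and needed later) for all $\bsk\in\mathcal K'_1$. The paper's inductive argument uses only the arithmetic relations in \eqref{eqn:D} and works uniformly for $\bsk_m\ge0$.
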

\begin{proof}
If $i=0$ then the lemma is obvious by \eqref{eqn:tcfbc1} and \eqref{eqn:cuxi}.
Assume the lemma is true for $i$
and let $(\xi,\xi_{i+1})\in D_\bsk(\varTheta_{i+1})$.
If $i=0$ then the restriction of
\[
\frac{\varGamma\big(\lambda_{1}\big)}{\varGamma\big(\tfrac12(\lambda_{1}+\al-|\be|+1)\big)}
\]
to $\lambda_1=\xi_1$
is a non-zero number $C$
because for $\xi_{1}\in D_\bsk(\varTheta_{1})$
\[
\xi_{1}\in -\mathbb N  \Leftrightarrow \xi_{1}+\al-|\be|+1\in-2\mathbb N.
\]
If $i>1$ then
\begin{multline*}
\frac{\varGamma\big(\lambda_{i+1}\big)}{\varGamma\big(\tfrac12(\lambda_{i+1}+\al-|\be|+1)\big)}
\prod_{1\leq q\le i}
\frac{\varGamma\big(\frac12(\lambda_{i+1}+\xi_q)\big)}
{\varGamma\big(\frac12(\lambda_{i+1}+\xi_q+2\bsk_m)\big)}\\
=
\frac{\varGamma\big(\frac12(\lambda_{i+1}+\xi_{1})\big)}{\varGamma\big(\tfrac12(\lambda_{i+1}+\al-|\be|+1)\big)}
\times
\frac{\varGamma\big(\lambda_{i+1}\big)}
{\varGamma\big(\frac12(\lambda_{i+1}+\xi_i+2\bsk_m)\big)}\\
\times\prod_{1\leq q< i}
\frac{\varGamma\big(\frac12(\lambda_{i+1}+\xi_{q+1})\big)}
{\varGamma\big(\frac12(\lambda_{i+1}+\xi_q+2\bsk_m)\big)}
\end{multline*}
and the restriction of this function to $\lambda_{i+1}=\xi_{i+1}$
is a non-zero number $C$
because for $\xi_{i+1}\in\C$ with $(\xi,\xi_{i+1})\in D_\bsk(\varTheta_{i+1})$ we have
\begin{align*}
\xi_{i+1}+\xi_1\in -2\mathbb N & \Leftrightarrow \xi_{i+1}+\al-|\be|+1\in-2\mathbb N,\\
\xi_{i+1}\in -\mathbb N & \Leftrightarrow \xi_{i+1}+\xi_i+2\bsk_m \in -2\mathbb N,\\
\xi_{i+1}+\xi_{q+1}\in -2\mathbb N &
\Leftrightarrow \xi_{i+1}+\xi_q+2\bsk_m \in -2\mathbb N\quad(1\le q<i). 
\end{align*}
Thus, in either case, the lemma is also true for $i+1$ with 
\[
a((\xi,\xi_{i+1}),\bsk;\lambda_{\mathfrak a_{\varTheta_{i+1}}})
=C\,a(\xi,\bsk;\lambda_{\mathfrak a_{\varTheta_i}})\bigr|_{\lambda_{i+1}=\xi_{i+1}}.
\qedhere
\]
\end{proof}

We define the subgroup $W^\varXi:=\mathbb{Z}_2^r$ of $W$.
This is a complete set of representatives 
for both $W/W_\varXi$ and $W_\varXi\backslash W$. 
By \eqref{eqn:hcs2} and \eqref{eq:updaownTheta}, 
\begin{equation}\label{eqn:phc}
F(\lambda,\bsk)=\!
\sum_{w\,\in\, 
W^\varXi
}
\!{\cc}^\varXi(w\lambda,\bsk){F}_\varXi(w\lambda,\bsk).
\end{equation}
For $0\leq i\leq r$ define $W^{\varXi}_i=W^\varXi\cap W({\varTheta_i})$.
Then $W^{\varXi}_i\simeq \mathbb{Z}_2^{r-i}$, whose elements 
act as changes of signs for $\beta_{i+1},\dots,\beta_r$.  

\begin{prop}\label{prop:xitempered}
Suppose 
$\bsk\in\mathcal{K}'_1$, $0\leq i\leq r$, 
$x\in\mathfrak a_+$ and $\xi\in \Dki$.
Then as a meromorphic function on
$\mathfrak a_{\varTheta_i,\C}^*$,
\begin{equation}\label{eqn:expxip}
F(\lambda,\bsk;x)\bigr|_{\lambda_{\mathfrak a({\varTheta_i})}=\xi}=\!\sum_{w\,\in\, W^{\varXi}_i}
\sigma_{\xi}\bigl(\cc^\varXi(w\lambda,\bsk)\bigr)\,
F_\varXi(w\lambda,\bsk;x)\bigr|_{\lambda_{\mathfrak a({\varTheta_i})}=\xi}\,.
\end{equation}
Moreover,
the coefficient of each term on the right hand side of \eqref{eqn:expxip}
is regular and non-zero at $\lambda_{\mathfrak{a}_{\varTheta_i}}\in\sqrt{-1}\mathfrak{a}_{\varTheta_i}^*$
such that
\begin{equation}\label{eqn:supgeneric}
\lambda_j\ne 0\ (i<\forall j\le r)\text{ and }\lambda_p\pm\lambda_q\ne 0\ (i<\forall q<\forall p\le r).
\end{equation} 
\end{prop}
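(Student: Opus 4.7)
The starting point is the decomposition \eqref{eqn:phc},
\[
F(\lambda,\bsk;x)=\!\sum_{w\,\in\, W^\varXi}\!\cc^\varXi(w\lambda,\bsk)\,F_\varXi(w\lambda,\bsk;x),
\]
which I plan to restrict to $\lambda_{\mathfrak a(\varTheta_i)}=\xi$ and to show that only the summands indexed by $w\in W^\varXi_i$ survive. For $w\in W^\varXi_i$, since such a $w$ acts trivially on $\mathfrak a(\varTheta_i)^*$, Lemma~\ref{lem:crestr} applies directly to $\cc^\varXi(w\lambda,\bsk)$ and gives an explicit expression for $\sigma_\xi(\cc^\varXi(w\lambda,\bsk))$ as a product of $\varGamma$-ratios in $(\varepsilon_{i+1}\lambda_{i+1},\ldots,\varepsilon_r\lambda_r)$, where $w=(\varepsilon_1,\ldots,\varepsilon_r)\in\{\pm1\}^r$. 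Similarly, one checks that $F_\varXi(w\lambda,\bsk;x)|_{\lambda_{\mathfrak a(\varTheta_i)}=\xi}$ is well-defined as a meromorphic function on $\mathfrak a_{\varTheta_i,\C}^*$: the singularity hyperplanes \eqref{eqn:fxihol} of $F_\varXi$ all require some $\langle\lambda,\alpha^\vee\rangle=j>0$ with $\alpha\in\Rp\!\setminus\!\langle\varXi\rangle$, while $\xi_j<0$ and $\xi_p+\xi_q<0$, so none of them can hold identically on the restriction locus.

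The key step is to prove that for $w=\varepsilon\in W^\varXi\!\setminus\!W^\varXi_i$ one has $\sigma_\xi(\cc^\varXi(w\lambda,\bsk))=0$. Let $j^*$ be the smallest index in $\{1,\ldots,i\}$ with $\varepsilon_{j^*}=-1$, and parametrize the elements of $\Dki$ by $\xi_j=\al-|\be|+1+2(j-1)\bsk_m+2N_j$ and $\xi_j-\xi_{j-1}=2\bsk_m+2m_{j-1}$ with $N_j,m_{j-1}\in\mathbb N$. When $j^*=1$, the factor $\tilde\cc_1(w\lambda,\bsk)|_{\lambda_1=\xi_1}$ of $\tilde\cc^\varXi(w\lambda,\bsk)$ has the $\varGamma$-denominator $\varGamma((-\xi_1+\al-|\be|+1)/2)=\varGamma(-N_1)=\infty$ (cf.~\eqref{eqn:tcfbc1}), so this factor is zero. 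When $j^*>1$, the minimality of $j^*$ forces $\varepsilon_{j^*-1}=+1$, and the medium-plus factor
\[
\frac{\varGamma\bigl((-\lambda_{j^*}+\lambda_{j^*-1})/2\bigr)}{\varGamma\bigl((-\lambda_{j^*}+\lambda_{j^*-1}+2\bsk_m)/2\bigr)}
\]
of $\tilde\cc^\varXi(w\lambda,\bsk)$ restricted at $\lambda_{j^*-1}=\xi_{j^*-1},\,\lambda_{j^*}=\xi_{j^*}$ equals $\varGamma(-\bsk_m-m_{j^*-1})/\varGamma(-m_{j^*-1})=0$, the denominator being an infinite $\varGamma$-pole.

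For the regularity and non-vanishing of the surviving coefficients $\sigma_\xi(\cc^\varXi(w\lambda,\bsk))$ at $\lambda_{\mathfrak a_{\varTheta_i}}\in\sqrt{-1}\mathfrak a_{\varTheta_i}^*$ satisfying \eqref{eqn:supgeneric}, I will inspect the explicit product of $\varGamma$-ratios given by Lemma~\ref{lem:crestr}. The universal prefactor $a(\xi,\bsk;\cdot)$ is regular and non-vanishing by that lemma. For each $\varGamma$-ratio the imaginary parts of the $\varGamma$-arguments are non-zero exactly when the relevant $\lambda_j$ or $\lambda_p\pm\lambda_q$ is non-zero, which by \eqref{eqn:supgeneric} keeps the arguments away from $-\mathbb N$; hence each ratio is finite and non-zero.

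The main obstacle is the rigorous treatment of potential $0\cdot\infty$ indeterminacies in the vanishing argument, since the full product $\tilde\cc^\varXi(w\lambda,\bsk)$ could a priori develop compensating zeros and poles at the successive restriction. I plan to handle this by the perturbation device of Remark~\ref{rem:regularization}: replace $\bsk$ by $\bsk+\Delta\bsk$ and take $\xi+\Delta\xi_{\mathfrak a(\varTheta_i)}\in D_{\bsk+\Delta\bsk}(\varTheta_i)$, so that every successive restriction is unambiguously defined for generic $\Delta\bsk$; verify that the distinguished factor identified above contributes a simple zero in $\Delta\bsk$ while the remaining factors stay regular and non-vanishing at $\Delta\bsk=0$; and then pass to the limit $\Delta\bsk\to 0$ to conclude.
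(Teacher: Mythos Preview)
Your approach is essentially the paper's own: start from the decomposition \eqref{eqn:phc}, kill the terms for $w\in W^\varXi\setminus W^\varXi_i$ using exactly the same two factors (the $\tilde\cc_1$-factor when $j^*=1$, the medium-plus $\varGamma$-ratio when $j^*>1$), invoke Lemma~\ref{lem:crestr} for the surviving coefficients, and use the perturbation of Remark~\ref{rem:regularization} to make the restriction rigorous.

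One point deserves sharpening. In your perturbation plan you say the distinguished factor ``contributes a simple zero in $\Delta\bsk$ while the remaining factors stay regular and non-vanishing at $\Delta\bsk=0$.'' The second clause can fail: for $w\notin W^\varXi_i$ the companion factors of $\tilde\cc^\varXi(w(\xi+\Delta\xi),\bsk+\Delta\bsk)$ may well develop poles at $\Delta\bsk=0$ (for instance $\varGamma(\xi_j+\Delta\xi_j)$ when $\xi_j\in-\mathbb N$), so a first-order zero alone would not suffice. What actually happens is stronger: under the perturbation of Remark~\ref{rem:regularization} the $\varGamma$-denominator argument of your distinguished factor equals $-N_1$ (case $j^*=1$) or $-m_{j^*-1}$ (case $j^*>1$) \emph{independently of} $\Delta\bsk$, so that factor is identically zero as a meromorphic function of $\Delta\bsk$ and the whole product vanishes regardless of poles elsewhere. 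The paper organizes this slightly differently and perhaps more cleanly: it first checks (via Lemma~\ref{lem:crestr}) that the \emph{reduced} right-hand side of \eqref{eqn:expxip} depends analytically on $\bsk$, then proves the identity only for generic $\bsk$ where \eqref{eqn:condlambda} holds and every term is unambiguous, and finally continues the reduced identity in $\bsk$---thereby never needing to control the bad terms at the limiting $\bsk$ at all.
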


\begin{proof}
Since the restriction operator $\sigma_\xi$ and the $W_i^\varXi$-action commute,
the second assertion of the proposition follows from Lemma \ref{lem:crestr}.

We prove \eqref{eqn:expxip} in a similar way as Theorem~\ref{thm:temperedtheta1}.
If $\Delta\xi\in \mathfrak{a}(\varTheta_i)_{\mathbb{C}}^*$
varies with $\Delta\bsk\in \mathcal K_{\mathbb C}$ as
$\Delta\lambda_{\mathfrak{a}(\varTheta_i)}$ does in Remark \ref{rem:regularization},
then it is easy to see from the proof of Lemma \ref{lem:crestr} 
that $a(\xi+\Delta\xi,\bsk+\Delta\bsk;\lambda_{\mathfrak a_{\varTheta_i}})$
is holomorphic in $\Delta\bsk$ around $0$.
Thus, for any fixed $\lambda_{\mathfrak{a}_{\varTheta_i}}\in\sqrt{-1}\mathfrak{a}_{\varTheta_i}^*$ satisfying \eqref{eqn:supgeneric},
$\sigma_{\xi}\bigl(\cc^\varXi(w\lambda,\bsk)\bigr)$ $(w\in W_i^\varXi)$
analytically depends on $\bsk$.
Hence we have only to prove \eqref{eqn:expxip}
for generic $\bsk$ and $\lambda_{\mathfrak{a}_{\varTheta_i}}$ such that
\eqref{eqn:condlambda} holds for $\lambda=\xi+\lambda_{\mathfrak{a}_{\varTheta_i}}$.
For such $\bsk$ and $\lambda_{\mathfrak{a}_{\varTheta_i}}$,
\[
\sigma_{\xi}\bigl(\cc^\varXi(w\lambda,\bsk)\bigr)
=\cc^\varXi(w(\xi+\lambda_{\mathfrak{a}_{\varTheta_i}}),\bsk)\quad
(w\in W_i^\varXi)
\]
and \eqref{eqn:phc} holds for $\lambda=\xi+\lambda_{\mathfrak{a}_{\varTheta_i}}$.
Thus \eqref{eqn:expxip} follows if we can show
\begin{equation*}
\cc^\varXi(w(\xi+\lambda_{\mathfrak{a}_{\varTheta_i}}),\bsk)=0\quad
\text{for any }w\in W^\varXi\,\setminus\, W^\Xi_i.
\end{equation*}
Let $w\in W^\varXi\,\setminus\, W^\Xi_i$.
Then there exists $j\in\{1,\ldots,i\}$
such that $w\beta_j=-\beta_j$.
Let $j$ be the smallest one.
If $j=1$ then $\cc^\varXi(w(\xi+\lambda_{\mathfrak{a}_{\varTheta_i}}),\bsk)=0$
by \eqref{eqn:tcfbc1} and \eqref{eqn:D}.
If $j>1$ then 
$\cc^\varXi(w(\xi+\lambda_{\mathfrak{a}_{\varTheta_i}}),\bsk)$ contains the factor 
\[
\frac{\varGamma\big(\frac12(-\xi_j+\xi_{j-1})\big)
}{\varGamma\big(\frac12(-\xi_j+\xi_{j-1}+2\bsk_m)\big)}, 
\]
which vanishes by \eqref{eqn:D}.
\end{proof}

\begin{rem}
Corollary~\ref{cor:tempered2} follows also from Proposition~\ref{prop:xitempered}.
\end{rem}

\section{
Inversion and Plancherel formula
}
\label{sect:inversion}

In this section, we assume $\bsk\in\mathcal{K}'_1$,
namely, $\bsk_s$, $\bsk_m$ and $\bsk_\ell$ are real numbers
satisfying \eqref{eqn:mcond2}.
Recall $\al$ and $\be$ are given by \eqref{eqn:paraj}. 
The sets $\Dki\subset \mathfrak{a}(\varTheta_i)^*\,\,(0\leq i\leq r)$ are defined by \eqref{eqn:D}. 
For $i=0,\ldots,r$ and $\lambda_{\mathfrak{a}(\varTheta_i)}=(\lambda_1,\ldots,\lambda_i)\in \Dki$
we define a positive number $d_{\varTheta_i} (\lambda_{\mathfrak{a}(\varTheta_i)},\bsk)$ as follows.
If $\bsk_m>0$ then we put
\begin{align}\label{eqn:fdefd}
&d_{\varTheta_i} (\lambda_{\mathfrak{a}(\varTheta_i)},\bsk)
 =
\tilde{\cc}_{\varTheta_i}(\rho(\bsk),\bsk)^2\\
&  \times \!
\prod_{j=1}^i \frac{-2^{2\al-2\be-1} \lambda_j}{\pi}\!
\frac{\varGamma\big(\frac12(\lambda_j+\al+|\be|+1)\big)\varGamma\big(\frac12(-\lambda_j+\al+|\be|+1)\big)}
{\varGamma\big(\frac12(\lambda_j-\al+|\be|+1)\big)\varGamma\big(\frac12(-\lambda_j-\al+|\be|+1)\big)}
\notag \\
& 
\times \!
\prod_{1\leq q<p\leq i}\!
\frac{(\lambda_q^2-\lambda_p^2) 
\varGamma\big(\frac12(\lambda_p-\lambda_q+2\bsk_m)\big)\varGamma\big(\frac12(-\lambda_q-\lambda_p+2\bsk_m)\big)}
{4\,\varGamma\big(\frac12(\lambda_p-\lambda_q-2\bsk_m+2)\big)\varGamma\big(\frac12(-\lambda_q-\lambda_p-2\bsk_m+2)\big)}.
\notag
\end{align}
If $\bsk_m=0$ or $r=1$ we put
\begin{align}\label{eqn:fdefd2}
&d_{\varTheta_i} (\lambda_{\mathfrak{a}(\varTheta_i)},\bsk)
 =
\tilde{\cc}_{\varTheta_i}(\rho(\bsk),\bsk)^2
\bigl|W_{\varTheta_i}^{\lambda_{\mathfrak{a}(\varTheta_i)}}\bigr|^{-1}\\
&  \times \!
\prod_{j=1}^i \frac{-2^{2\al-2\be-1} \lambda_j}{\pi}\!
\frac{\varGamma\big(\frac12(\lambda_j+\al+|\be|+1)\big)\varGamma\big(\frac12(-\lambda_j+\al+|\be|+1)\big)}
{\varGamma\big(\frac12(\lambda_j-\al+|\be|+1)\big)\varGamma\big(\frac12(-\lambda_j-\al+|\be|+1)\big)}
\notag
\end{align}
where $W_{\varTheta_i}^{\lambda_{\mathfrak{a}(\varTheta_i)}}$ is the stabilizer of $\lambda_{\mathfrak{a}(\varTheta_i)}$ in $W_{\varTheta_i}$.
Observe that $d_{\varTheta_i} (\lambda,\bsk)$ is actually positive.
In particular $d_\emptyset(0,\bsk)=1$.
\begin{rem}
If $\lambda_{\mathfrak{a}(\varTheta_i)}$ varies with $\bsk$ as in Remark \ref{rem:regularization}
then \eqref{eqn:fdefd2} is the limit of \eqref{eqn:fdefd} as $\bsk_m\to 0^+$.
\end{rem}

\begin{lem}\label{lem:cTheta}
Suppose $1\le i<r$ and $\lambda_{\mathfrak{a}(\varTheta_i)}=(\lambda_1,\ldots,\lambda_i)\in \Dki$.
If we write $\lambda=\lambda_{\mathfrak{a}(\varTheta_i)}+\lambda_{\mathfrak{a}_{\varTheta_i}}$
for $\lambda_{\mathfrak{a}_{\varTheta_i}}=(\lambda_{i+1},\ldots,\lambda_r)\in \C^{r-i}\simeq\mathfrak a_{\varTheta_i,\C}^*$ 
then $\cc^{\varTheta_i}(-\lambda,\bsk)^{-1}$
is a well-defined meromorphic function in $\lambda_{\mathfrak{a}_{\varTheta_i}}$.
The singularities of this function in the region 
\begin{equation}\label{eqn:Zreg}
\left\{
\lambda_{\mathfrak{a}_{\varTheta_i}}
\in\mathbb{C}^{r-i}\,;
\,\begin{aligned}
&\lambda_i\le \operatorname{Re}\lambda_{i+1}\leq \cdots 
\leq \operatorname{Re}\lambda_r\leq 0,\text{ and}\\
&\operatorname{Re}\lambda_r-\operatorname{Re}\lambda_{i+1}<2\bsk_m \text{ when $\bsk_m>0$}
\end{aligned}
\right\}
\end{equation}
are precisely 
the simple poles along
$\lambda_j=\xi$ for $j=i+1,\ldots, r$ and $\xi\in \mathbb R$
such that $(\lambda_1,\ldots,\lambda_i,\xi)\in D_\bsk(\varTheta_{i+1})$.
In particular $\cc^{\varTheta_i}(-\lambda,\bsk)^{-1}$ is regular on $\sqrt{-1}\mathfrak a_{\varTheta_i}^*$.
\end{lem}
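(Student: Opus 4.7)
The plan is to expand $\cc^{\varTheta_i}(-\lambda,\bsk)^{-1}$ as the reciprocal of a product of the individual $\tilde\cc_\alpha(-\lambda,\bsk)$'s for $\alpha\in\Rp\setminus\langle\varTheta_i\rangle^+$, normalized by the nonzero constant $\tilde\cc^{\varTheta_i}(\rho(\bsk),\bsk)$ (nonzero because $\bsk\in\mathcal K'_1\subset\mathcal K_{\mathrm{reg}}$). These factors split into three classes: (I) the rank-one pieces $\tilde\cc_j(-\lambda,\bsk)^{-1}$ in the form of \eqref{eqn:tcfbc1}, one for each $j>i$; (II) the medium-root factors attached to pairs $(q,p)$ with $q\le i<p$, in which $\lambda_q$ is the fixed real number determined by $\lambda_{\mathfrak a(\varTheta_i)}\in\Dki$; and (III) the medium-root factors for pairs $(q,p)$ with $i<q<p$, depending on two free coordinates. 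Each factor is a ratio of gamma functions with linear arguments in $\lambda_{\mathfrak a_{\varTheta_i}}$, so meromorphicity is immediate, and the work is to locate the uncancelled numerator poles that meet the region \eqref{eqn:Zreg}.

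First I would dispose of Class III. Its numerator gamma-factors produce poles along $\lambda_p-\lambda_q\in 2\bsk_m+2\mathbb N$ or $\lambda_p+\lambda_q\in 2\bsk_m+2\mathbb N$. In the region $\operatorname{Re}(\lambda_p+\lambda_q)\le 0$, incompatible with the nonnegative right-hand side of the second condition; and when $\bsk_m>0$ the region inequality $\operatorname{Re}(\lambda_p-\lambda_q)\le\operatorname{Re}\lambda_r-\operatorname{Re}\lambda_{i+1}<2\bsk_m$ kills the first condition as well, while if $\bsk_m=0$ the Class III factors degenerate to $1$. Thus no singularity in the region comes from Class III.

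Next I would exhibit the cancellations that kill the remaining numerator poles from Classes I and II with $q<i$. Writing $\lambda_q=\al-|\be|+1+2(n_1+\dots+n_q)+2(q-1)\bsk_m$ with $n_q\in\mathbb N$ (from the definition \eqref{eqn:D} of $\Dki$), a Class I numerator pole at $\lambda_j=\al-|\be|+1+2m$ is cancelled as soon as $m\ge n_1$ by the Class II denominator factor $\Gamma(\tfrac12(-\lambda_j+\lambda_1))$ attached to the pair $(1,j)$, whose pole lies at the same value of $\lambda_j$; for $m<n_1$ we have $\lambda_j<\lambda_1\le\lambda_i$, so the hyperplane misses the region. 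The companion Class I pole at $\lambda_j=\al+|\be|+1+2m$ is always positive (since $\al>-1$), hence outside the region. In the same way, a Class II numerator pole along $\lambda_p=\lambda_q+2\bsk_m+2m$ with $q<i$ is cancelled whenever $m\ge n_{q+1}$ by the Class II denominator factor $\Gamma(\tfrac12(-\lambda_p+\lambda_{q+1}))$ attached to the pair $(q+1,p)$; for $m<n_{q+1}$ we have $\lambda_p<\lambda_{q+1}\le\lambda_i$, outside the region, and the companion pole at $\lambda_p=-\lambda_q+2\bsk_m+2m$ is positive.

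The only surviving singularities are therefore those coming from the Class II pairs $(i,p)$, namely the hyperplanes $\lambda_p=\lambda_i+2\bsk_m+2m$ for $p\in\{i+1,\dots,r\}$ and $m\in\mathbb N$ with $\lambda_i+2\bsk_m+2m<0$; here no further fixed coordinate exists to drive the cancellation. By \eqref{eqn:D} these are precisely the hyperplanes $\lambda_p=\xi$ for which $(\lambda_1,\dots,\lambda_i,\xi)\in D_\bsk(\varTheta_{i+1})$, matching the claim. Each pole is simple because the only contributing factor $\Gamma(\tfrac12(-\lambda_p+\lambda_i+2\bsk_m))$ has a simple pole there while the remaining factors, evaluated at a generic point of $\{\lambda_p=\xi\}$, are regular. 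Finally, on $\sqrt{-1}\mathfrak a^*_{\varTheta_i}$ we have $\operatorname{Re}\lambda_p=0>\xi$ for every $p>i$, so none of these hyperplanes meet this subspace, which gives the asserted regularity. The main bookkeeping obstacle is keeping the adjacent-index cancellations in Class II in phase with the ordering relations among $\lambda_1,\dots,\lambda_i$, and this is exactly what the three-class decomposition is designed to manage.
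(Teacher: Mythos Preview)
Your argument is correct and follows essentially the same route as the paper: both proofs recognize that the only nontrivial bookkeeping is in the factors depending on a single free coordinate $\lambda_j$ ($j>i$), and both exploit the same chain of cancellations (Class~I numerator $\leftrightarrow$ $(1,j)$ denominator, $(q,j)$ numerator $\leftrightarrow$ $(q{+}1,j)$ denominator) to leave only the $(i,j)$ numerator poles. The paper packages this more tightly by regrouping the single-variable factors into the telescoping product
\[
\frac{\varGamma\bigl(\tfrac12(-\lambda_{j}+\al-|\be|+1)\bigr)}{\varGamma\bigl(\tfrac12(-\lambda_{j}+\lambda_1)\bigr)}
\prod_{q=1}^{i-1}\frac{\varGamma\bigl(\tfrac12(-\lambda_{j}+\lambda_q+2\bsk_m)\bigr)}{\varGamma\bigl(\tfrac12(-\lambda_{j}+\lambda_{q+1})\bigr)}
\ \times\
\frac{\varGamma\bigl(\tfrac12(-\lambda_{j}+\lambda_i+2\bsk_m)\bigr)}{\varGamma(-\lambda_j)},
\]
then observes that on the region each ratio in the first block has numerator and denominator with \emph{identical} pole loci, hence is regular and non-vanishing; only the last ratio contributes poles. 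This telescoping formulation buys two things over your presentation: it makes automatic the check that no ``extra'' denominator zeros survive to cancel the $(i,j)$ poles (this is the non-vanishing of the first block, which your argument leaves implicit), and it cleanly explains why the constraint is $\xi<0$ rather than $\xi\le 0$ (the $\varGamma(-\lambda_j)$ in the denominator of the last ratio removes any pole at $\lambda_j=0$). You should add a sentence covering these two points; otherwise the arguments coincide.
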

\begin{proof}
The first assertion is clear from the definition of $\cc^{\varTheta_i}(-\lambda,\bsk)$.
By \eqref{eqn:tcfbc1}
the function
\begin{multline}\label{eq:crutialprod}
\prod_{j=i+1}^r\frac{\varGamma\big(\frac12(-\lambda_j+\al-|\be|+1)\big)
\varGamma\big(\frac12(-\lambda_j+\al+|\be|+1)\big)}{\varGamma(-\lambda_j)}\\
\times \prod_{\substack{1\leq q<p\leq r \\ p>i}}\frac{\varGamma\big(\frac12(-\lambda_q-\lambda_p+2\bsk_m)\big)
\varGamma\big(\frac12(\lambda_q-\lambda_p+2\bsk_m)\big)}{\varGamma\big(\frac12(-\lambda_q-\lambda_p)\big)
\varGamma\big(\frac12(\lambda_q-\lambda_p)\big)}
\end{multline}
has the same singularities as ${\cc}^{\varTheta_i}(-\lambda,\bsk)^{-1}$.
We assert for $j=i+1,\ldots,r$ that
the singularities of the partial product
\begin{equation}\label{eqn:singpart1}
\frac{{\varGamma\big(\tfrac12(-\lambda_{j}+\al-|\be|+1)\big)}}{\varGamma(-\lambda_j)}
\prod_{q=1}^i\frac{
\varGamma\big(\frac12(\lambda_q-\lambda_{j}+2\bsk_m)\big)}{
\varGamma\big(\frac12(\lambda_q-\lambda_{j})\big)} 
\end{equation}
on the region \eqref{eqn:Zreg}
are precisely the simple poles along
$\lambda_j=\xi$ for $\xi\in \mathbb R$
such that $(\lambda_1,\ldots,\lambda_i,\xi)\in D_\bsk(\varTheta_{i+1})$.
In fact, we can rewrite \eqref{eqn:singpart1} as the product of
\begin{equation}\label{eqn:singpart3}
\frac{\varGamma\big(\tfrac12(-\lambda_{j}+\al-|\be|+1)\big)}{\varGamma\big(\tfrac12(-\lambda_{j}+\lambda_1)\big)}
\prod_{q=1}^{i-1}\frac{
\varGamma\big(\tfrac12(-\lambda_{j}+\lambda_q+2\bsk_m)\big)}{
\varGamma\big(\tfrac12(-\lambda_{j}+\lambda_{q+1})\big)}
\end{equation}
and
\begin{equation}\label{eqn:singpart2}
\frac{\varGamma\big(\tfrac12(-\lambda_{j}+\lambda_i+2\bsk_m)\big)}{\varGamma(-\lambda_j)}.
\end{equation}
On the one hand, \eqref{eqn:singpart3} is regular and non-vanishing on \eqref{eqn:Zreg}
since for $\lambda_j\in\C$ with $\lambda_i\le\operatorname{Re}\lambda_j$ we have
\begin{align*}
-\lambda_{j}+\al-|\be|+1 \in -2\mathbb N &\Leftrightarrow
-\lambda_{j}+\lambda_1 \in -2\mathbb N,\\
-\lambda_{j}+\lambda_q+2\bsk_m  \in -2\mathbb N
&\Leftrightarrow -\lambda_{j}+\lambda_{q+1} \in -2\mathbb N\quad(1\le q <i).
\end{align*}
On the other hand, the poles of \eqref{eqn:singpart2} are just as stated above.

It is easy to check that
the other factors in \eqref{eq:crutialprod}
produce neither any pole nor any zero locus that cancels a pole of \eqref{eqn:singpart1}.
\end{proof}

Let 
\[
d\lambda_{\mathfrak{a}_{\varTheta_i}}=d\lambda_{i+1}\cdots d\lambda_r
\]
 denote 
the Euclidean measure on $\mathfrak{a}_{\varTheta_i}^*$ and  $\mu_{\varTheta_i}$  the 
measure on $\sqrt{-1}\mathfrak{a}_{\varTheta_i}^*$ 
given by 
\begin{equation}
d\mu_{\varTheta_i}(\lambda_{\mathfrak{a}_{\varTheta_i}})=(2\pi)^{-r+i}\,d(\mathrm{Im}\, \lambda_{\mathfrak{a}_{\varTheta_i}^*}\!)
=(2\pi\sqrt{-1})^{-r+i}\,d \lambda_{\mathfrak{a}_{\varTheta_i}},
\end{equation}
which coincides with \eqref{eqn:measure1} if $i=0$. 

For $0\leq i\leq r$, let $\nu_{\bsk,\varTheta_i}$ denote the measure on 
$\Dki+\sqrt{-1}\mathfrak{a}_{\varTheta_i}^*$ defined by
\begin{multline}\label{eqn:meas00i}
\int_{\Dki+\sqrt{-1}\mathfrak{a}_{\varTheta_i}^*} \!\!
 \psi(\lambda)  \,d \nu_{\bsk,\varTheta_i}(\lambda) \\ 
=
\sum_{\lambda_{\mathfrak{a}(\varTheta_i)}
\,\in \,\Dki}\!\!d_{\varTheta_i} (\lambda_{\mathfrak{a}(\varTheta_i)},\bsk) 
 \int_{\sqrt{-1}\mathfrak{a}_{\varTheta_i}^*}\!\!
\psi(\lambda)\,|\cc^{\varTheta_i}(\lambda,\bsk)|^{-2}
\,
d\mu_{\varTheta_i}(\lambda_{\mathfrak{a}_{\varTheta_i}}).
\end{multline}
In particular, 
\[
\int_{\sqrt{-1}\mathfrak{a}^*}\psi(\lambda)\,d\nu_{\bsk,\emptyset}(\lambda)=
\int_{\sqrt{-1}\mathfrak{a}^*}\!\psi(\lambda)\,\frac{d\mu(\lambda)}{|\cc(\lambda,\bsk)|^2}
\]
and
\[
\int_{\Dkr}
 \psi(\lambda)\,d\nu_{\bsk,\B}(\lambda)=\!
\sum_{\lambda\,\in\, \Dkr}\!d_{\B}(\lambda,\bsk)\,\psi(\lambda).
\]

Recall $\mathcal S_\bsk$ is defined by \eqref{eq:Schwartz}.
For $\phi\in\mathcal S_\bsk$ let $\mathcal{J}_{\bsk,\varTheta_i}\phi(x)\,\,(0\leq i\leq r)$ denote the functions defined by
\begin{equation}\label{eqn:jfb0}
\mathcal{J}_{\bsk,\varTheta_i} \phi(x)=
\frac{1}{|W(\varTheta_i)|}\int_{\Dki+\sqrt{-1}\mathfrak{a}_{\varTheta_i}^*}
\phi(\lambda)F(\lambda,\bsk;x)\,d\nu_{\bsk,\varTheta_i}(\lambda).
\end{equation}
By \eqref{eq:PWcondition}, Theorem \ref{lem:est}, Lemma \ref{lem:cTheta} and Lemma \ref{lem:estcf}, 
the integral on the right hand side of 
\eqref{eqn:jfb0}  converges and defines an element of $C^\infty(\mathfrak{a})^W$. 
Note that $\mathcal{J}_{\bsk,\emptyset}\,\phi(x)$ for $\phi\in\PW^W$
coincides with \eqref{eqn:jfempty}.

Now we state the main result of this paper:
\begin{thm}[Inversion formula, final form]\label{thm:main}
Let $\bsk\in\mathcal{K}'_1$.
Then for $\phi\in\PW^W$
\[
\mathcal{J}_{\bsk}\,\phi(x)
=\sum_{i=0}^r \mathcal{J}_{\bsk,\varTheta_i}\phi(x)
\quad (x\in \mathfrak{a}_+).
\]
Thus, it holds for 
$f\in C_0^\infty(\mathfrak{a})^W$ that 
\begin{equation*}
f(x)=\sum_{i=0}^r \mathcal{J}_{\bsk,\varTheta_i}\mathcal{F}_\bsk f(x)\quad (x\in \mathfrak{a}).
\end{equation*}
\end{thm}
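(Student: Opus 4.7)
The plan has two parts. First, I would deduce the second identity from the first: given $f \in C_0^\infty(\mathfrak{a})^W$, Proposition \ref{prop:uniest} yields $\mathcal F_\bsk f \in \PW^W$ and Theorem \ref{thm:1st} gives $f(x) = \mathcal J_\bsk(\mathcal F_\bsk f)(x)$ for $x \in \mathfrak a_+$; applying the first identity with $\phi = \mathcal F_\bsk f$ then proves the second identity on $\mathfrak a_+$, and both sides are $W$-invariant continuous functions on $\mathfrak a$, so the equality extends to all of $\mathfrak a$. The heart of the proof is to establish the first identity, which I would do by iterating the residue calculation of Proposition \ref{prop:second}, one level at a time.

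More precisely, Proposition \ref{prop:second} already writes $\mathcal J_\bsk \phi - \mathcal J_{\bsk,\emptyset}\phi$ as a sum over $\xi_1 \in \Dko$ of integrals over $\check\xi_1 + \sqrt{-1}\mathfrak a_{\varTheta_1}^*$. For each such $\xi_1$, I would (a) use Proposition \ref{prop:xitempered} at level $i=1$ to rewrite $F_\varXi(\lambda,\bsk;x)|_{\lambda_1 = \xi_1}$ as the $W_1^\varXi$-sum whose total is $F(\lambda,\bsk;x)|_{\lambda_1=\xi_1}$ divided by $\sigma_{\xi_1}(\cc^\varXi(\lambda,\bsk))$, absorbing the sign changes using the $W$-invariance of $\phi$ and the $W_1^\varXi$-symmetrization of the measure; and (b) shift the $(\lambda_2,\ldots,\lambda_r)$-contour from $\check\xi_1 + \sqrt{-1}\mathfrak a_{\varTheta_1}^*$ down to $\sqrt{-1}\mathfrak a_{\varTheta_1}^*$. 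By Lemma \ref{lem:cTheta}, the only singularities crossed are simple poles along $\lambda_j = \xi_j$ with $(\xi_1,\xi_j) \in D_\bsk(\varTheta_2)$, and Cauchy's residue theorem then splits the resulting integral into (i) the unshifted contribution on $\sqrt{-1}\mathfrak a_{\varTheta_1}^*$, to be identified with the $\mathcal J_{\bsk,\varTheta_1}\phi$ piece, plus (ii) new residue integrals over $D_\bsk(\varTheta_2)$-strata. Iterating the same recipe level by level on each $\Dki$-stratum and the remaining contour inside $\mathfrak a_{\varTheta_i}^*$, I would obtain a sum over all $i = 0, 1, \ldots, r$ of final unshifted integrals on $\Dki + \sqrt{-1}\mathfrak a_{\varTheta_i}^*$. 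The process terminates because $\Dkr$ is a finite set.

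The crux is to verify that, at each level $i$, the weight produced by the iterated residue of $\cc_\varXi(\lambda,\bsk)^{-1}\cc(-\lambda,\bsk)^{-1}$ at $\xi' \in \Dki$, combined with the $W_\varXi$-symmetrization of the discrete coordinates, collapses to $|W(\varTheta_i)|^{-1}\,d_{\varTheta_i}(\xi',\bsk)\,|\cc^{\varTheta_i}(\lambda,\bsk)|^{-2}$, the weight defining $\nu_{\bsk,\varTheta_i}$. This reduces to manipulations of Gamma-function products based on the explicit expressions \eqref{eqn:tcfbc1}, \eqref{eqn:tcfxi}, \eqref{eqn:cuxi}, and \eqref{eqn:crho}, together with the reflection formula $\varGamma(z)\varGamma(1-z) = \pi/\sin(\pi z)$ and the recursion pattern of Lemma \ref{lem:crestr}. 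In the generic case $\bsk_m > 0$ (and $r>1$) the combinatorics yield the product formula \eqref{eqn:fdefd}; in the degenerate case $\bsk_m = 0$ or $r = 1$ the $W_\varXi$-stabilizer of $\xi'$ is nontrivial, which accounts for the extra factor $|W_{\varTheta_i}^{\lambda_{\mathfrak a(\varTheta_i)}}|^{-1}$ in \eqref{eqn:fdefd2}.

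The main obstacle is this residue bookkeeping: tracking all Gamma-factor cancellations through the $i$ levels of iteration and confirming that the combinatorial prefactors collapse into the closed forms \eqref{eqn:fdefd} and \eqref{eqn:fdefd2}, while keeping the positivity of $d_{\varTheta_i}$ transparent. The strategy closely follows \cite{Sh94}, where the analogous rank-one calculation was carried out for spherical functions associated with a one-dimensional $K$-type on a Hermitian symmetric space; here the arbitrary-rank $BC$ combinatorics multiplies the number of cases to check but does not change the essential mechanism.
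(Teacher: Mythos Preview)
Your overall strategy---iterate the contour shift of Proposition~\ref{prop:second}, pick up successive residues, and identify each level-$i$ boundary term with $\mathcal J_{\bsk,\varTheta_i}\phi$ via Proposition~\ref{prop:xitempered} and a Gamma-function computation equivalent to Proposition~\ref{prop:dtheta}---matches the paper's proof in the case $\bsk_m>0$ (or $r=1$). One organizational point: the paper does the contour shifts entirely in terms of $F_\varXi$ and only converts to $F$ \emph{after} the contour has reached $\sqrt{-1}\mathfrak a_{\varTheta_i}^*$. Your step~(a) as written (symmetrize over $W_i^\varXi$ and pass to $F$) does not work on the off-axis contour $\check\xi_i+\sqrt{-1}\mathfrak a_{\varTheta_i}^*$, since that contour is not $W_i^\varXi$-invariant; the symmetrization and Proposition~\ref{prop:xitempered} only make sense once you are on the imaginary axis. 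So (b) must precede (a), as in the paper.

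The genuine gap is the case $r>1$, $\bsk_m=0$. Proposition~\ref{prop:second} (and the iterated-residue argument built on it) explicitly requires $\bsk_m>0$ when $r>1$: with $\bsk_m=0$ the factors $\tilde\cc_{\beta_p\pm\beta_q}$ are identically~$1$, so the pole-cancellation step in the proof of Proposition~\ref{prop:second} fails and the contour-shift picks up extra (non-simple) singularities along $\lambda_p=\lambda_q$. The stabilizer factor in \eqref{eqn:fdefd2} is not a mere bookkeeping correction to the same residue scheme; it arises for a different reason. The paper handles $\bsk_m=0$ by an entirely separate argument: one uses the factorizations $\varPhi(\lambda,\bsk;x)=\prod_j\varPhi_0(\lambda_j;x_j)$ and $\cc(\lambda,\bsk)=r!^{-1}\prod_j\cc_0(\lambda_j)$ to write $\mathcal J_\bsk\phi$ as an $r$-fold iterated rank-one inverse Jacobi transform, applies the already-proved $r=1$ inversion formula coordinatewise, and then resymmetrizes. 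Your proposal needs to add this product argument (or something equivalent) for the $\bsk_m=0$ branch.
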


\begin{proof}
By Theorem \ref{thm:1st} the latter formula follows from the former one. 

First we assume $r=1$ or $\bsk_m>0$.
For $1\leq i\leq r$ and $\xi=(\xi_1,\dots,\xi_i)\in \Dki$ put
\begin{align*}
I_{\varTheta_i}(\xi,\lambda_{\mathfrak{a}_{\varTheta_i}};x)
= & \,\frac{(-1)^i}{(2\pi\sqrt{-1})^{r-i}(r-i)!} \\
&\times \left.\left(\phi(\lambda)\,
{F}_\varXi(\lambda,\bsk;x)
\,{\cc}_\varXi(\lambda,\bsk)^{-1}\right)\right|_{\lambda_{\mathfrak a(\varTheta_i)}=\xi}  \\
& \times \Res_{\lambda_i=\xi_i}\cdots  \Res_{\lambda_2=\xi_2}\Res_{\lambda_1=\xi_1}
\cc(-\lambda,\bsk)^{-1}
\end{align*}
and 
\begin{equation}\label{eqn:int20}
\phi_{\varTheta_i}^\vee(x)= 
 \sum_{\xi\,\in\,\Dki}
\int_{\sqrt{-1}\mathfrak{a}_{\varTheta_i}^*}
I_{\varTheta_i}(\xi,\lambda_{\mathfrak{a}_{\varTheta_i}};x)
\,d\lambda_{\mathfrak{a}_{\varTheta_i}}\quad(x\in\mathfrak a_+).
\end{equation}
The function $\lambda\mapsto\phi(\lambda)\,
{F}_\varXi(\lambda,\bsk;x)
\,{\cc}_\varXi(\lambda,\bsk)^{-1}$ is regular in the region \eqref{eqn:clnegch}.
By \eqref{eqn:thetai} and \eqref{eq:updaownTheta} we have
\begin{multline}\label{eq:RescRes}
\Res_{\lambda_i=\xi_i}\cdots \Res_{\lambda_2=\xi_2}\Res_{\lambda_1=\xi_1}\cc(-\lambda,\bsk)^{-1} \\
 ={\cc}^{\varTheta_i}(-\lambda,\bsk)^{-1}\big|_{\lambda_{\mathfrak a(\varTheta_i)}=\xi}\,
\Res_{\lambda_i=\xi_i}\cdots \Res_{\lambda_2=\xi_2}\Res_{\lambda_1=\xi_1}{\cc}_{\varTheta_i}(-\lambda,\bsk)^{-1}
\end{multline}
 and $\Res_{\lambda_i=\xi_i}\cdots \Res_{\lambda_2=\xi_2}\Res_{\lambda_1
=\xi_1}{\cc}_{\varTheta_i}(-\lambda,\bsk)^{-1}$ is 
a constant that depends only on $\bsk$ and $\xi\in\Dki$.
Hence each integral on the right hand side of \eqref{eqn:int20} converges by 
Lemma~\ref{lem:estcf}, Lemma \ref{lem:FXiest}, 
and Lemma \ref{lem:cTheta}. 
Let us prove by induction that $\mathcal{J}_\bsk\, \phi(x)-\mathcal{J}_{\bsk,\emptyset}\,\phi(x)
-\sum_{j=1}^{i-1}\phi_{\varTheta_j}^\vee(x)$ 
equals 
\begin{equation}\label{eqn:inti0}
 \sum_{\xi\,\in\,\Dki}
\int_{\check{\xi}_i+\sqrt{-1}\mathfrak{a}_{\varTheta_i}^*}
I_{\varTheta_i}(\xi,\lambda_{\mathfrak{a}_{\varTheta_i}};x)
\,d\lambda_{\mathfrak{a}_{\varTheta_i}}
\end{equation}
for $i=1,\ldots,r$.
Here $\check\xi_{i}=(\xi_{i},\dots,\xi_{i})\in \mathfrak{a}_{\varTheta_i}^*$. 
We already proved the case of $i=1$ by Proposition \ref{prop:second}.
If $r=1$ then we are done.
So let $1\le i<r$ and $\xi=(\xi_1,\dots,\xi_i)\in \Dki$.
Let $\{\xi^{(1)}<\xi^{(2)}<\cdots<\xi^{(k)}=0\}=
\{\xi_{i+1}\in\mathbb R\,;\,(\xi_1,\dots,\xi_i,\xi_{i+1})\in D_\bsk(\varTheta_{i+1})\}\cup\{0\}$.
With a sufficiently small $\varepsilon>0$ 
put
\begin{align*}
\eta^{(1,0)}&=(\xi^{(1)}-\varepsilon,\,\ldots,\,\xi^{(1)}-\varepsilon,\,\xi^{(1)}-\varepsilon,\,\xi^{(1)}-\varepsilon),\\
\eta^{(1,1)}&=(\xi^{(1)}-\varepsilon,\,\ldots,\,\xi^{(1)}-\varepsilon,\,\xi^{(1)}-\varepsilon,\,\xi^{(1)}+\varepsilon),\\
\eta^{(1,2)}&=(\xi^{(1)}-\varepsilon,\,\ldots,\,\xi^{(1)}-\varepsilon,\,\xi^{(1)}+\varepsilon,\,\xi^{(1)}+\varepsilon),\\
&\  \,\vdots\\
\eta^{(1,r-i)}&=(\xi^{(1)}+\varepsilon,\,\ldots,\,\xi^{(1)}+\varepsilon,\,\xi^{(1)}+\varepsilon,\,\xi^{(1)}+\varepsilon),\\
\eta^{(2,0)}&=(\xi^{(2)}-\varepsilon,\,\ldots,\,\xi^{(2)}-\varepsilon,\,\xi^{(2)}-\varepsilon,\,\xi^{(2)}-\varepsilon),\\
\eta^{(2,1)}&=(\xi^{(2)}-\varepsilon,\,\ldots,\,\xi^{(2)}-\varepsilon,\,\xi^{(2)}-\varepsilon,\,\xi^{(2)}+\varepsilon),\\
&\ \,\vdots\\
\eta^{(k-1,r-i)}&=(\xi^{(k-1)}+\varepsilon,\,\ldots,\,\xi^{(k-1)}+\varepsilon,\,\xi^{(k-1)}+\varepsilon,\,\xi^{(k-1)}+\varepsilon),\\
\eta^{(k,0)}&=(\xi^{(k)}-\varepsilon,\,\ldots,\,\xi^{(k)}-\varepsilon,\,\xi^{(k)}-\varepsilon,\,\xi^{(k)}-\varepsilon)\\
&=(-\varepsilon,\,\ldots,\,-\varepsilon,\,-\varepsilon,\,-\varepsilon).
\end{align*}
All these points belong to the region \eqref{eqn:Zreg} with $\lambda_i=\xi_i$.
We shift the domain of the integration
\[
\int_{\check{\xi}_i+\sqrt{-1}\mathfrak{a}_{\varTheta_i}^*}
I_{\varTheta_i}(\xi,\lambda_{\mathfrak{a}_{\varTheta_i}};x)
\,d\lambda_{\mathfrak{a}_{\varTheta_i}}
\]
successively as
\begin{multline*}
\check{\xi}_i+\sqrt{-1}\mathfrak{a}_{\varTheta_i}^*
\,\to\,
\eta^{(1,0)}+\sqrt{-1}\mathfrak{a}_{\varTheta_i}^*
\,\to\,
\eta^{(1,1)}+\sqrt{-1}\mathfrak{a}_{\varTheta_i}^*
\,\to\,\\
\cdots\to\,
\eta^{(k,0)}+\sqrt{-1}\mathfrak{a}_{\varTheta_i}^*
\,\to\, \sqrt{-1}\mathfrak{a}_{\varTheta_i}^*
\end{multline*}
while picking up residues as in the proof of Proposition \ref{prop:second}.
Let $1\le \ell < k$ and $1\le j\le r-i$. 
Using Cauchy's residue theorem and changes of variables
in view of \eqref{eq:WXiinvc}, Lemma \ref{lem:cTheta} and \eqref{eq:RescRes},
we have
\begin{multline*}
\int_{\eta^{(\ell,j)}+\sqrt{-1}\mathfrak{a}_{\varTheta_i}^*}
I_{\varTheta_i}(\xi,\lambda_{\mathfrak{a}_{\varTheta_i}};x)
\,d\lambda_{\mathfrak{a}_{\varTheta_i}}
-
\int_{\eta^{(\ell,j-1)}+\sqrt{-1}\mathfrak{a}_{\varTheta_i}^*}
I_{\varTheta_i}(\xi,\lambda_{\mathfrak{a}_{\varTheta_i}};x)
\,d\lambda_{\mathfrak{a}_{\varTheta_i}}
\\
=2\pi\sqrt{-1}
\int_{(\xi^{(\l)},\ldots,\xi^{(\l)})+\sqrt{-1}\mathfrak{a}_{\varTheta_{i+1}}^*}
\Res_{\lambda_{i+1}=\xi^{(\l)}}
I_{\varTheta_i}(\xi,\lambda_{\mathfrak{a}_{\varTheta_i}};x)
\,d\lambda_{\mathfrak{a}_{\varTheta_{i+1}}}.
\end{multline*}
Note that this is independent of $j$.
Hence
\begin{multline*}
\int_{\check{\xi}_i+\sqrt{-1}\mathfrak{a}_{\varTheta_i}^*}
I_{\varTheta_i}(\xi,\lambda_{\mathfrak{a}_{\varTheta_i}};x)
\,d\lambda_{\mathfrak{a}_{\varTheta_i}}
-
\int_{\sqrt{-1}\mathfrak{a}_{\varTheta_i}^*}
I_{\varTheta_i}(\xi,\lambda_{\mathfrak{a}_{\varTheta_i}};x)
\,d\lambda_{\mathfrak{a}_{\varTheta_i}}
\\
=\sum_{\l=1}^{k-1}
\int_{(\xi^{(\l)},\ldots,\xi^{(\l)})+\sqrt{-1}\mathfrak{a}_{\varTheta_{i+1}}^*}
I_{\varTheta_{i+1}}((\xi,\xi^{(\l)}),\lambda_{\mathfrak{a}_{\varTheta_{i+1}}};x)
\,d\lambda_{\mathfrak{a}_{\varTheta_{i+1}}}.
\end{multline*}
This shows that the difference of \eqref{eqn:inti0} and \eqref{eqn:int20} 
is \eqref{eqn:inti0} with $i$ replaced by $i+1$,
completing the induction.
If $i=r$ then \eqref{eqn:inti0} equals \eqref{eqn:int20}.
Therefore
\[
\mathcal{J}_\bsk\,\phi(x)-\mathcal{J}_{\bsk,\emptyset}\,\phi(x)
=\sum_{i=1}^r \phi_{\varTheta_i}^\vee (x).
\]

Now let $1\le i\le r$ and $\xi\in\Dki$.
From Lemma \ref{lem:crestr} and \eqref{eq:updaownTheta}
it holds that as a meromorphic function on $\mathfrak{a}_{\varTheta_i,\C}^*$
\begin{align*}
{\cc}_\varXi&(\lambda,\bsk)^{-1}\bigr|_{\lambda_{\mathfrak a(\varTheta_i)}=\xi}
\Res_{\lambda_i=\xi_i}\cdots\Res_{\lambda_1=\xi_1}
\cc(-\lambda,\bsk)^{-1}\\
&=\sigma_\xi\bigl(\cc^\varXi(\lambda,\bsk)\bigr)
\Res_{\lambda_i=\xi_i}\cdots\Res_{\lambda_1=\xi_1}
\bigl(
\cc^\varXi(\lambda,\bsk)^{-1}
{\cc}_\varXi(\lambda,\bsk)^{-1}
\cc(-\lambda,\bsk)^{-1}
\bigr)\\
&=\sigma_\xi\bigl(\cc^\varXi(\lambda,\bsk)\bigr)
\bigl(\cc^{\varTheta_i}(\lambda,\bsk)^{-1}\cc^{\varTheta_i}(-\lambda,\bsk)\bigr)^{-1}\bigr|_{\lambda_{\mathfrak a(\varTheta_i)}=\xi}\\
&\hspace{8em}
\times
\Res_{\lambda_i=\xi_i}\cdots\Res_{\lambda_1=\xi_1}
\bigl(\cc_{\varTheta_i}(\lambda,\bsk)^{-1}\cc_{\varTheta_i}(-\lambda,\bsk)^{-1}\bigr).
\end{align*}
Here $\sigma_\xi$ is the restriction operator defined by \eqref{eq:suc_res}.
As we shall see in Proposition~\ref{prop:dtheta},
the factor in the last line reduces to $(-1)^i d_{\varTheta_i} (\xi,\bsk)$.
Furthermore $\cc^{\varTheta_i}(\lambda,\bsk)\,\cc^{\varTheta_i}(-\lambda,\bsk)$
is $W(\varTheta_i)$-invariant and restricts to
$|\cc^{\varTheta_i}(\lambda,\bsk)|^2$ on $\xi+\sqrt{-1}\mathfrak{a}_{\varTheta_i}^*$ 
 (cf. \cite[Lemma~6.6]{Sh94}). 
Hence by changes of variables and Proposition~\ref{prop:xitempered}
\begin{multline*}
\int_{\sqrt{-1}\mathfrak{a}_{\varTheta_i}^*}
I_{\varTheta_i}(\xi,\lambda_{\mathfrak{a}_{\varTheta_i}};x)
\,d\lambda_{\mathfrak{a}_{\varTheta_i}}\\
=
\frac{d_{\varTheta_i} (\xi,\bsk)}{|W(\varTheta_i)|}
\int_{\sqrt{-1}\mathfrak{a}_{\varTheta_i}^*}
\left. \frac{\phi(\lambda)F(\lambda,\bsk;x)}
{|\cc^{\varTheta_i}(\lambda,\bsk)|^2}
\right|_{\lambda_{\mathfrak a(\varTheta_i)}=\xi}
d\mu_{\varTheta_i}(\lambda_{\mathfrak a_{\varTheta_i}}).
\end{multline*}
Thus
$\phi_{\varTheta_i}^\vee(x)=\mathcal{J}_{\bsk,\varTheta_i}\,\phi(x)$
for $x\in\mathfrak a_+$ and $i=1,\ldots,r$,
proving the theorem under the assumption that $r=1$ or $\bsk_m>0$.

Next, let us assume $r>1$ and $\bsk_m=0$.
In this case Proposition \ref{prop:second} is not applicable.
Instead, we use the result in the case of $r=1$ (the case of the Jacobi transform)
that is shown above.
Let $F_0(\lambda_1;x_1)$, $\varPhi_0(\lambda_1;x_1)$, $\cc_0(\lambda_1)$
and $d_0(\lambda_1)$
be respectively
$F(\lambda,\bsk;x)$, $\varPhi(\lambda,\bsk;x)$, $\cc(\lambda,\bsk)$
and $d_{\mathcal B}(\lambda,\bsk)$
in the case of $r=1$.
Then we have
\begin{align*}
F(\lambda,\bsk;x)&=\frac1{r!}\sum_{s\in\mathfrak S_r}\prod_{j=1}^r F_0(\lambda_j;x_{s(j)}),
&\varPhi(\lambda,\bsk;x)&=\prod_{j=1}^r \varPhi_0(\lambda_j;x_j),\\
d_{\varTheta_i}(\lambda_{\mathfrak{a}(\varTheta_i)},\bsk)&=\frac{(i!)^2}{\bigl|W_{\varTheta_i}^{\lambda_{\mathfrak{a}(\varTheta_i)}}\bigr|}\prod_{j=1}^i d_0(\lambda_j),
&\cc^{\varTheta_i}(\lambda,\bsk)&=\frac{i!}{r!}\prod_{j=i+1}^r \cc_0(\lambda_j).
\end{align*}
The first and second formulas are respectively (2-7) and (2-4) of \cite{Sh08}.
To deduce the third and fourth formulas use \eqref{eqn:crho}.
Hence if $x=(x_1,\ldots,x_r)\in\mathfrak a_+$, $\eta\ll0$ and $s\in \mathfrak S_r$ then
$\mathcal{J}_\bsk\, \phi(x)$ is equal to
\begin{align}\label{eq:ititgl}
&\int_{(\eta,\ldots,\eta)+\sqrt{-1}\mathfrak a^*} \phi(\lambda)\,
\varPhi((\lambda_{s^{-1}(1)},\ldots,\lambda_{s^{-1}(r)}),\bsk,x)\cc(-\lambda,\bsk)^{-1}d\mu(\lambda)\\
&\,=
\int_{(\eta,\ldots,\eta)+\sqrt{-1}\mathfrak a^*} \phi(\lambda)\,
\varPhi(\lambda,\bsk,(x_{s(1)},\ldots,x_{s(r)}))\cc(-\lambda,\bsk)^{-1}d\mu(\lambda) \notag\\
&\,=\frac{r!}{(2\pi\sqrt{-1})^r}\int_{\eta-\sqrt{-1}\infty}^{\eta+\sqrt{-1}\infty}
\frac{\varPhi_0(\lambda_1;x_{s(1)})d\lambda_1}{\cc_0(-\lambda_1)}
\int_{\eta-\sqrt{-1}\infty}^{\eta+\sqrt{-1}\infty}
\frac{\varPhi_0(\lambda_2;x_{s(2)})d\lambda_2}{\cc_0(-\lambda_2)} \notag\\
&\hspace{10em} \cdots
\int_{\eta-\sqrt{-1}\infty}^{\eta+\sqrt{-1}\infty}
\frac{\phi(\lambda_1,\ldots,\lambda_r)\varPhi_0(\lambda_r;x_{s(r)})d\lambda_r}{\cc_0(-\lambda_r)}. \notag
\end{align}
Each step in the iterated integral is the inverse Jacobi transform
of a Paley-Wiener function.
If $\psi(\lambda_j)$ is a Paley-Wiener function of one variable,
then by the result in the case of $r=1$ we have
\begin{align*}
&\frac1{2\pi\sqrt{-1}}\int_{\eta-\sqrt{-1}\infty}^{\eta+\sqrt{-1}\infty}
\frac{\psi(\lambda_j)\,\varPhi_0(\lambda_j;x_{s(j)})d\lambda_j}{\cc_0(-\lambda_j)}\\
&=
\frac1{4\pi\sqrt{-1}}\int_{\sqrt{-1}\mathbb R}
\frac{\psi(\lambda_j) F_0(\lambda_j;x_{s(j)})d\lambda_j}{|\cc_0(\lambda_j)|^2}
+\!\!\!\sum_{\lambda_j\in D_\bsk(\varTheta_1)} \!\!\!
d_0(\lambda_j)\psi(\lambda_j) F_0(\lambda_j;x_{s(j)}).
\end{align*}
Therefore \eqref{eq:ititgl} becomes
\begin{multline}\label{eq:invk0}
\sum_{i=0}^r
\sum_{\substack{J=\{j_1<\cdots<j_i\} \\ \subset \{1,\ldots,r\}}}
\sum_{\substack{(\lambda_{j_1},\lambda_{j_2},\ldots,\lambda_{j_i}) \\ \in D_\bsk(\varTheta_1)^i}}
\frac{r!\prod_{j\in J}d_0(\lambda_j)}{(4\pi\sqrt{-1})^{r-i}}\\
\times\int_{\sqrt{-1}\mathbb R^{r-i}}
\phi(\lambda)\prod_{j=1}^r F_0(\lambda_j;x_{s(j)})
\prod_{j\notin J}\frac{d\lambda_j}{|\cc_0(\lambda_j)|^2}.
\end{multline}
Since $\mathcal{J}_\bsk\, \phi(x)$ (which is equal to \eqref{eq:invk0}) is independent of $s \in \mathfrak S_r$, we can replace $\prod_{j=1}^r F_0(\lambda_j;x_{s(j)})$ in \eqref{eq:invk0}
with $F(\lambda,\bsk;x)$.
By changes of variables, \eqref{eq:invk0} reduces to
\begin{align*}
&\sum_{i=0}^r
\sum_{\lambda_{\mathfrak{a}(\varTheta_i)}\in D_\bsk(\varTheta_i)}
\frac{(i!)^2\prod_{j=1}^i d_0(\lambda_j)}{(2\pi\sqrt{-1})^{r-i}|W(\varTheta_i)|\bigl|W_{\varTheta_i}^{\lambda_{\mathfrak{a}(\varTheta_i)}}\bigr|}\\
&\hspace{10em}\times\int_{\sqrt{-1}\mathfrak{a}_{\varTheta_i}^*}
\phi(\lambda)F(\lambda,\bsk;x)\biggl(\frac{r!}{i!}\biggr)^2
\prod_{j=i+1}^r\frac{d\lambda_j}{|\cc_0(\lambda_j)|^2}\\
&=\sum_{i=0}^r \mathcal{J}_{\bsk,\varTheta_i}\phi(x).\qedhere
\end{align*}
\end{proof}

\begin{prop}\label{prop:dtheta}
Assume $\bsk\in\mathcal{K}'_1$ and 
let $1\leq i\leq r$. 
If $r> 1$ then we further assume $\bsk_m>0$.
For $\xi\in\Dki$, 
\begin{equation}\label{eqn:fdtheta}
(-1)^i
\Res_{\lambda_i=\xi_i}\cdots\Res_{\lambda_1=\xi_1}
\bigl(\cc_{\varTheta_i}(\lambda,\bsk)^{-1}\cc_{\varTheta_i}(-\lambda,\bsk)^{-1}\bigr)
=d_{\varTheta_i} (\xi,\bsk).
\end{equation}
\end{prop}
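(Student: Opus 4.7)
The strategy is induction on $i$. Writing
\[
\cc_{\varTheta_i}(\lambda,\bsk)^{-1}\cc_{\varTheta_i}(-\lambda,\bsk)^{-1}
=\tilde{\cc}_{\varTheta_i}(\rho(\bsk),\bsk)^2\,
[\tilde{\cc}_{\varTheta_i}(\lambda,\bsk)\tilde{\cc}_{\varTheta_i}(-\lambda,\bsk)]^{-1},
\]
we account for the normalizing factor $\tilde{\cc}_{\varTheta_i}(\rho(\bsk),\bsk)^2$ in \eqref{eqn:fdefd} at once, and it remains to compute the successive residue of the second factor. By \eqref{eqn:tcf} restricted to the subsystem $\langle\varTheta_i\rangle\simeq BC_i$, this second factor splits into a product over $j=1,\ldots,i$ of the BC$_1$-type factors $[\tilde{\cc}_j(\lambda,\bsk)\tilde{\cc}_j(-\lambda,\bsk)]^{-1}$ (each depending on $\lambda_j$ only, cf.~\eqref{eqn:tcfbc1}) and a product over $1\le q<p\le i$ of the medium-root factors $[\tilde{\cc}_{\beta_p\pm\beta_q}(\lambda,\bsk)\tilde{\cc}_{\beta_p\pm\beta_q}(-\lambda,\bsk)]^{-1}$ (each depending on $\lambda_p,\lambda_q$ only).

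For the base case $i=1$ only the BC$_1$-type factor contributes, and the simple pole at $\xi_1$ comes from $\varGamma((-\lambda_1+\al-|\be|+1)/2)$ in $[\tilde{\cc}_1(-\lambda_1,\bsk)]^{-1}$, since $\xi_1+|\be|-\al-1\in 2\mathbb{N}$ forces $(-\xi_1+\al-|\be|+1)/2\in-\mathbb{N}$. A direct computation using $\Res_{z=-m}\varGamma(z)=(-1)^m/m!$ and the reflection formula $\varGamma(z)\varGamma(1-z)=\pi/\sin\pi z$ (applied to the numerator factors $\varGamma(\pm\lambda_1/2)\varGamma((\pm\lambda_1+1)/2)$ of $\tilde{\cc}_1(\pm\lambda_1,\bsk)$) reproduces \eqref{eqn:fdefd2} for $r=i=1$; no use of $\bsk_m$ is made here. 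For the inductive step with $i\ge 2$ (so $r>1$ and $\bsk_m>0$), we apply the inner residues $\Res_{\lambda_1=\xi_1}\cdots\Res_{\lambda_{i-1}=\xi_{i-1}}$ first. The BC$_1$-type factor in $\lambda_i$ is independent of $\lambda_1,\ldots,\lambda_{i-1}$, and the cross factors involving $\lambda_i$ are regular at $(\xi_1,\ldots,\xi_{i-1})$ for generic $\lambda_i$; consequently these pull out of the inner residues, and the induction hypothesis applied to the $BC_{i-1}$ subsystem yields the factor $(-1)^{i-1}d_{\varTheta_{i-1}}(\xi',\bsk)/\tilde{\cc}_{\varTheta_{i-1}}(\rho(\bsk),\bsk)^2$. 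The outer $\Res_{\lambda_i=\xi_i}$ then picks up a simple pole only from $[\tilde{\cc}_{\beta_i-\beta_{i-1}}(-\lambda,\bsk)]^{-1}|_{\lambda_{i-1}=\xi_{i-1}}$, with the pole located at $\lambda_i=\xi_i$ thanks to $\xi_i-\xi_{i-1}-2\bsk_m\in 2\mathbb{N}$ from the definition of $\Dki$. The assumption $\bsk_m>0$ rules out the potential alternative poles: the non-adjacent cross factors (with $q<i-1$) would require $(i-q-1)\bsk_m+(\text{integer})\in\mathbb{N}$ to contribute a pole, and the BC$_1$-type factor in $\lambda_i$ at $\xi_i$ would require $\xi_i\in\al-|\be|+1+2\mathbb{N}$, neither of which holds for generic $\bsk_m>0$.

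The main obstacle is the Gamma-function bookkeeping needed to verify that the product of the single residue, the evaluations of the non-singular factors at $(\xi_1,\ldots,\xi_i)$, and the inductive factor $d_{\varTheta_{i-1}}(\xi',\bsk)$ simplifies to $d_{\varTheta_i}(\xi,\bsk)$ as in \eqref{eqn:fdefd}. The key Gamma identities are the reflection formula (which pairs the numerator $\varGamma$-values of the cross factors into the shape $\varGamma(z)\varGamma(-z)=-\pi/(z\sin\pi z)$, producing the factor $(\xi_q^2-\xi_p^2)/4$ in \eqref{eqn:fdefd} with $z=(\xi_p\pm\xi_q)/2$) and the shift $\varGamma(z+1)=z\varGamma(z)$ (to reconcile the $\bsk_m$-shifted arguments). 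The sign $(-1)^i$ in \eqref{eqn:fdtheta} is accumulated from the $-2$ produced by each successive residue. The identity so obtained, which holds initially for generic $\bsk_m>0$, extends to all admissible $\bsk_m>0$ by meromorphic continuation in $\bsk_m$. This residue bookkeeping is a direct adaptation of the calculation in \cite[Section~6]{Sh94} for the spherical transform associated with a one-dimensional $K$-type.
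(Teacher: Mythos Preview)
Your proposal is correct and follows the same inductive scheme as the paper's proof: both separate off the normalizing constant $\tilde{\cc}_{\varTheta_i}(\rho(\bsk),\bsk)^2$, treat $i=1$ by direct computation of the $BC_1$-factor, and reduce the step $i-1\to i$ to computing $\Res_{\lambda_i=\xi_i}$ of the product over the new roots $\langle\varTheta_i\rangle^+\setminus\langle\varTheta_{i-1}\rangle$.

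The one technical difference is in how the inductive step is organized. You locate the sole pole in the adjacent cross factor $[\tilde{\cc}_{\beta_i-\beta_{i-1}}(-\lambda,\bsk)]^{-1}$ by arguing that for generic $\bsk_m>0$ neither the $BC_1$-type factor in $\lambda_i$ nor the non-adjacent cross factors can be singular at $\xi_i$, and then invoke analytic continuation in $\bsk_m$ (with $\xi$ moving along, as in Remark~\ref{rem:regularization}). The paper instead avoids the genericity detour by a telescoping identity: using $\xi_{j+1}-\xi_j-2\bsk_m\in 2\mathbb N$ it rewrites each four-fold cross factor for $\beta_i\pm\beta_j$ as
\[
\frac{\sin\frac{\pi}{2}(\lambda_i+\xi_j)\sin\frac{\pi}{2}(\lambda_i-\xi_j)}
     {\sin\frac{\pi}{2}(\lambda_i+\xi_{j+1})\sin\frac{\pi}{2}(\lambda_i-\xi_{j+1})}\,q(\lambda_i,\xi_j),
\]
so that after multiplying over $j=1,\dots,i-1$ and combining with the $BC_1$-factor (itself written with denominator $\sin\frac{\pi}{2}(\lambda_i+\xi_1)\sin\frac{\pi}{2}(\lambda_i-\xi_1)$), all intermediate sine factors cancel and the only surviving denominator is $\sin\frac{\pi}{2}(\lambda_i+\xi_i)\sin\frac{\pi}{2}(\lambda_i-\xi_i)$. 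This displays the simple pole at $\lambda_i=\xi_i$ uniformly for every $\bsk_m>0$ and makes the residue evaluation a one-line computation; your route reaches the same answer but carries the extra continuation step and a heavier Gamma-function bookkeeping load.
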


\begin{proof}
By \eqref{eqn:tcfbc1} and the following formulas
\begin{align}
\label{eqn:gamma0}
& \varGamma(z+1)=z\varGamma(z), \\
& \varGamma(z)\varGamma(1-z)=\frac{\pi}{\sin \pi z}
\label{eqn:gamma2}
\end{align}
for the Gamma function, we have
\begin{align}\label{eq:cici}
\tilde{\cc}_i(\lambda,&\bsk)^{-1}\tilde{\cc}_i(-\lambda,\bsk)^{-1}\\
&=\frac{2^{\al-\be-2}\lambda_i\sin(-\pi\lambda_i)}{\sin\frac{\pi}{2}(\lambda_i+\al-|\be|+1)\sin\frac{\pi}{2}(-\lambda_i+\al-|\be|+1)}\,p(\lambda_i) \notag\\
&=\frac{2^{\al-\be-2}\lambda_i\sin\pi\lambda_i}{\sin\frac{\pi}{2}(\lambda_i+\xi_1)\sin\frac{\pi}{2}(\lambda_i-\xi_1)}\,p(\lambda_i) \notag\\
\intertext{with}
p(z)&:=\frac{\varGamma\big(\frac12(z+\al+|\be|+1)\big)
\varGamma\big(\frac12(-z+\al+|\be|+1)\big)}{\varGamma\big(\frac12(z-\al+|\be|+1)\big)
\varGamma\big(\frac12(-z-\al+|\be|+1)\big)}.\notag
\end{align}
Here the second equality in \eqref{eq:cici} holds
since $\xi_1-\al+|\be|-1\in2\mathbb{N}$.
Hence if $i=1$ then
\begin{align*}
-\Res_{\lambda_1=\xi_1}
\bigl(&\tilde{\cc}_1 (\lambda,\bsk)^{-1} \tilde{\cc}_1(-\lambda,\bsk)^{-1}\bigl)\\
&=
-2^{\al-\be-2}\xi_1\, p(\xi_1) \cdot
\left.\frac{\sin\pi\lambda_1}{\sin\frac{\pi}{2}(\lambda_1+\xi_1)}\right|_{\lambda_1=\xi_1}\cdot
\Res_{\lambda_1=\xi_1}\frac1{\sin\frac{\pi}{2}(\lambda_1-\xi_1)}\\
&=\frac{-2^{\al-\be-1}\xi_1}{\pi}p(\xi_1).
\end{align*}
This proves \eqref{eqn:fdtheta} for $i=1$. 

Now let $i>1$.
We will prove \eqref{eqn:fdtheta}
assuming the equality holds for $i-1$.
Let $\xi=(\xi',\xi_{i})\in \Dki$ 
with $\xi'\in D_{\bsk}(\varTheta_{i-1})$.
It is sufficient to show 
\begin{align}\label{eqn:resind}
& -\frac{
\tilde{\cc}_{\varTheta_{i}}(\rho(\bsk),\bsk)^{-2}\,d_{\varTheta_{i}} (\xi,\bsk)}
{\tilde{\cc}_{\varTheta_{i-1}}(\rho(\bsk),\bsk)^{-2}\,d_{\varTheta_{i-1}} (\xi',\bsk)} \\
& \,\, =
\Res_{\lambda_{i}=\xi_{i}}\!\prod_{\alpha\,\in \,\langle\varTheta_{i}\rangle^+\,\setminus
\, \langle\varTheta_{i-1}\rangle}\!
\tilde{\cc}_\alpha ((\xi',\lambda_{i}), \bsk)^{-1}\tilde{\cc}_\alpha((-\xi',-\lambda_{i}),\bsk)^{-1}.
\notag
\end{align}
The set 
$\langle\varTheta_{i}\rangle^+\,\setminus\, \langle\varTheta_{i-1}\rangle$ consists of 
roots $2\beta_{i}$, $\beta_{i}$, $\beta_{i}\pm \beta_j$ $(1\leq j< i)$. 
By \eqref{eqn:tca}, \eqref{eqn:gamma0} and \eqref{eqn:gamma2}
we have for $j=1,\ldots,i-1$
\begin{align}\label{eq:cbb4}
\tilde{\cc}_{\beta_i-\beta_j}& ((\xi',\lambda_{i}), \bsk)^{-1}
\tilde{\cc}_{\beta_i+\beta_j}((\xi',\lambda_{i}),\bsk)^{-1}\\
\times&
\tilde{\cc}_{\beta_i-\beta_j} ((-\xi',-\lambda_{i}),\bsk), \bsk)^{-1}
\tilde{\cc}_{\beta_i+\beta_j}((-\xi',-\lambda_{i}),\bsk)^{-1} \notag\\
&=\frac{-
\sin\frac{\pi}{2}(\lambda_i+\xi_j)\sin\frac{\pi}{2}(\lambda_i-\xi_j)}
{\sin\frac{\pi}{2}(\lambda_i+\xi_j+2\bsk_m)\sin\frac{\pi}{2}(-\lambda_i+\xi_j+2\bsk_m)}\,q(\lambda_i,\xi_j) \notag\\
&=\frac{
\sin\frac{\pi}{2}(\lambda_i+\xi_j)\sin\frac{\pi}{2}(\lambda_i-\xi_j)}
{\sin\frac{\pi}{2}(\lambda_i+\xi_{j+1})\sin\frac{\pi}{2}(\lambda_i-\xi_{j+1})}\,q(\lambda_i,\xi_j) \notag\\
\intertext{with}
q(z,w)&:=\frac{(w^2-z^2) 
\varGamma\big(\frac12(z-w+2\bsk_m)\big)\varGamma\big(\frac12(-z-w+2\bsk_m)\big)}
{4\varGamma\big(\frac12(z-w-2\bsk_m+2)\big)\varGamma\big(\frac12(-z-w-2\bsk_m+2)\big)}.\notag
\end{align}
Here the second equality in \eqref{eq:cbb4} holds
since $\xi_{j+1}-\xi_{j}-2\bsk_m\in2\mathbb{N}$.
Combining \eqref{eq:cbb4} with \eqref{eq:cici} we have
\begin{multline*}
\prod_{\alpha\,\in \,\langle\varTheta_{i}\rangle^+\,\setminus
\, \langle\varTheta_{i-1}\rangle}\!
\tilde{\cc}_\alpha ((\xi',\lambda_{i}), \bsk)^{-1}\tilde{\cc}_\alpha((-\xi',-\lambda_{i}),\bsk)^{-1}\\
=\frac{2^{\al-\be-2}\lambda_i\sin\pi\lambda_i}{\sin\frac{\pi}{2}(\lambda_i+\xi_i)\sin\frac{\pi}{2}(\lambda_i-\xi_i)}\,p(\lambda_i)\prod_{j=1}^{i-1} q(\lambda_i,\xi_j),
\end{multline*}
whose residue at $\lambda_i=\xi_i$ equals
\begin{multline*}
2^{\al-\be-2}\xi_i\, p(\xi_i)\prod_{j=1}^{i-1} q(\xi_i,\xi_j) \cdot
\left.\frac{\sin\pi\lambda_i}{\sin\frac{\pi}{2}(\lambda_i+\xi_i)}\right|_{\lambda_i=\xi_i}\cdot
\Res_{\lambda_i=\xi_i}\frac1{\sin\frac{\pi}{2}(\lambda_i-\xi_i)}\\
=\frac{2^{\al-\be-1}\xi_i}{\pi}\,p(\xi_i)\prod_{j=1}^{i-1} q(\xi_i,\xi_j).
\end{multline*}
Thus \eqref{eqn:resind} is proved.
\end{proof}

\begin{thm}[Paley-Wiener theorem]\label{thm:pw}
Let $\bsk\in\mathcal{K}'_1$. 
Then the hypergeometric Fourier transform $\mathcal{F}_\bsk$ is a bijection of 
$C_0^\infty(\mathfrak{a})^W$ onto $\PW^W$. 
\end{thm}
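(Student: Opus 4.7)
The plan is to exhibit $\mathcal{J}_\bsk$ as a two-sided inverse of $\mathcal{F}_\bsk$. By Proposition~\ref{prop:uniest} we already have $\mathcal{F}_\bsk(C_0^\infty(\mathfrak{a})^W) \subset \PW^W$, and by Theorem~\ref{thm:1st} we have $\mathcal{J}_\bsk\,\mathcal{F}_\bsk\,f = f$ for $f\in C_0^\infty(\mathfrak{a})^W$, which yields injectivity of $\mathcal{F}_\bsk$. What remains are (i) to show that $\mathcal{J}_\bsk$ maps $\PW^W$ into $C_0^\infty(\mathfrak{a})^W$, and (ii) to prove $\mathcal{F}_\bsk\mathcal{J}_\bsk\phi = \phi$ for every $\phi \in \PW^W$; once these are established, $\phi = \mathcal{F}_\bsk(\mathcal{J}_\bsk\phi)$ shows surjectivity as well.

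For (i), Theorem~\ref{thm:main} gives the pointwise decomposition $\mathcal{J}_\bsk\phi = \sum_{i=0}^{r}\mathcal{J}_{\bsk,\varTheta_i}\phi$ on $\mathfrak{a}_+$. Each summand is a $W$-invariant smooth function defined on \emph{all} of $\mathfrak{a}$ via \eqref{eqn:jfb0}, hence the right-hand side furnishes a $W$-invariant $C^\infty$-extension of $\mathcal{J}_\bsk\phi$ to $\mathfrak{a}$. Proposition~\ref{prop:J_kbdd} shows that this extension has bounded support, so it indeed lies in $C_0^\infty(\mathfrak{a})^W$.

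For (ii), the strategy is to reduce to the already known case $\bsk\in\mathcal K_+$ via the shift operators. Combining \eqref{eq:HGFshift1} with \eqref{eq:WPOshift1} yields
\begin{equation*}
\mathcal{F}_\bsk\mathcal{J}_\bsk\phi
= \mathcal{F}_{\tilde\bsk}\Bigl(4^{2\bsk_\l-1}\prod_{i=1}^{r}\Bigl(\cosh\tfrac{\beta_i}{2}\Bigr)^{2\bsk_\l-1}\mathcal{J}_\bsk\phi\Bigr)
= \mathcal{F}_{\tilde\bsk}\mathcal{J}_{\tilde\bsk}\phi,
\end{equation*}
and \eqref{eq:HGFshift2} combined with \eqref{eq:WPOshift2} yields $\mathcal{F}_\bsk\mathcal{J}_\bsk\phi = \mathcal{F}_{\bsk+\boldsymbol{1}}\mathcal{J}_{\bsk+\boldsymbol{1}}\phi$ (using $\tilde G^+(\bsk)f = G^+(\bsk)f$ on $W$-invariants). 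Both operations $\bsk\mapsto\tilde\bsk$ and $\bsk\mapsto\bsk+\boldsymbol{1}$ preserve $\mathcal{K}'_1$, so these identities may be iterated, and exactly as in the proof of Proposition~\ref{prop:uniest} one chooses $a,b\in\mathbb{N}$ so that $\bsk' := \widetilde{(\bsk+a\boldsymbol{1})} + b\boldsymbol{1} \in \mathcal{K}_+$, obtaining $\mathcal{F}_\bsk\mathcal{J}_\bsk = \mathcal{F}_{\bsk'}\mathcal{J}_{\bsk'}$ as operators on $\PW^W$. For such $\bsk' \in \mathcal{K}_+$, an inspection of \eqref{eqn:D} shows $D_{\bsk'}(\varTheta_i) = \emptyset$ for all $i\ge 1$; hence Theorem~\ref{thm:main} collapses to $\mathcal{J}_{\bsk'}\phi = \mathcal{J}_{\bsk',\emptyset}\phi$, and Opdam's original Paley-Wiener theorem \cite[Theorem~9.13]{Op:Cherednik} delivers $\mathcal{F}_{\bsk'}\mathcal{J}_{\bsk',\emptyset}\phi = \phi$.

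The main obstacle is promoting the shift identities from pointwise statements on $\mathfrak{a}_+$ (where $\mathcal{J}_\bsk\phi$ is originally given by the contour integral \eqref{eqn:jf}) to operator identities on all of $\PW^W$; this crucially relies on step (i), which guarantees that $\mathcal{J}_\bsk\phi$ is a genuine element of $C_0^\infty(\mathfrak{a})^W$ so that $G^+(\bsk)\mathcal{J}_\bsk\phi$ and the $\cosh$-multiplication produce compactly supported smooth input to be fed back into $\mathcal{F}_{\tilde\bsk}$ or $\mathcal{F}_{\bsk+\boldsymbol{1}}$. The residual arithmetic---verifying that finitely many applications of $\,\widetilde{\phantom{\bsk}}\,$ and $+\boldsymbol{1}$ bring any $\bsk\in\mathcal{K}'_1$ into $\mathcal{K}_+$ while staying inside $\mathcal{K}'_1$---is identical to the computation already carried out in the proof of Proposition~\ref{prop:uniest}.
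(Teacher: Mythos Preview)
Your proof is correct and follows essentially the same strategy as the paper's: both use Theorem~\ref{thm:main} together with Proposition~\ref{prop:J_kbdd} to see that $\mathcal{J}_\bsk\phi\in C_0^\infty(\mathfrak a)^W$, and then combine the shift identities \eqref{eq:HGFshift1}--\eqref{eq:HGFshift2} with \eqref{eq:WPOshift1}--\eqref{eq:WPOshift2} to reduce $\mathcal{F}_\bsk\mathcal{J}_\bsk\phi=\phi$ to Opdam's result for $\bsk\in\mathcal K_+$. Your additional remark that $D_{\bsk'}(\varTheta_i)=\emptyset$ for $\bsk'\in\mathcal K_+$ is correct but not needed, since the paper simply cites \cite[Theorem~9.13]{Op:Cherednik} directly for the full Paley--Wiener statement in the nonnegative case.
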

\begin{proof}
Let $\phi\in\PW^W$.
By Proposition \ref{prop:J_kbdd} and Theorem \ref{thm:main},
$\mathcal J_\bsk\phi(x)\in C^\infty(\mathfrak a_+)$ extends to 
a function in $C^\infty_0(\mathfrak a)^W$.
Thus it only remains to prove that $\mathcal{F}_\bsk\mathcal J_\bsk\phi(\lambda)=\phi(\lambda)$.
By \cite[Theorem~9.13]{Op:Cherednik} the equality holds if $\bsk\in\mathcal K_+$.
The general result follows from this because
by \eqref{eq:HGFshift1}, \eqref{eq:HGFshift2}, \eqref{eq:WPOshift1} and \eqref{eq:WPOshift2}
it holds that $\mathcal{F}_\bsk\mathcal J_\bsk\phi=\mathcal{F}_{\tilde\bsk}\mathcal J_{\tilde\bsk}\phi=\mathcal{F}_{\bsk+\boldsymbol 1}\mathcal J_{\bsk+\boldsymbol 1}\phi$
for any $\bsk\in\mathcal{K}'_1$.
\end{proof}

Now we define for $\phi\in \mathcal S_\bsk$
\[
\mathcal{J}_{\bsk}\,\phi(x)
=\sum_{i=0}^r \mathcal{J}_{\bsk,\varTheta_i}\phi(x)
\in C^\infty(\mathfrak a)^W.
\]
Let $\nu_\bsk$ denote the measure on $\bigsqcup_{i=0}^r (\Dki+\sqrt{-1}\Cl(\mathfrak{a}_{\varTheta_i,+}^*))$
that coincides with $\nu_{\bsk,\varTheta_i}$ on each $\Dki+\sqrt{-1}\Cl(\mathfrak{a}_{\varTheta_i,+}^*)$.
By Lemma \ref{lem:estcf} and Lemma \ref{lem:PWinS},
both $\PW^W$ and $\mathcal S_\bsk$ are naturally identified with
dense subspaces of
$L^2(\bigsqcup_{i=0}^r (\Dki+\sqrt{-1}\Cl(\mathfrak{a}_{\varTheta_i,+}^*))\,;d\nu_\bsk)$
by restriction.

\begin{lem}\label{lem:FJadjunction}
Suppose $\phi\in\mathcal S_\bsk$ and $f\in C_0^\infty(\mathfrak{a})^W$.
Then
\[
\frac{1}{|W|}\int_{\mathfrak{a}} \mathcal J_\bsk\phi(x) \overline{f(x)} \delta_\bsk(x)dx
=\int_{\bigsqcup_{i=0}^r (\Dki+\sqrt{-1}\Cl(\mathfrak{a}_{\varTheta_i,+}^*))}
\phi(\lambda)\, \overline{\mathcal{F}_\bsk f(\lambda)}\,d\nu_\bsk(\lambda).
\]
\end{lem}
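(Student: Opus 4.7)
The plan is to unfold $\mathcal J_\bsk\phi=\sum_{i=0}^r\mathcal J_{\bsk,\varTheta_i}\phi$, interchange the $x$-integration with the $\lambda$-integration by Fubini, identify the inner $x$-integral with $|W|\,\overline{\mathcal F_\bsk f(\lambda)}$, and finally fold each integral over $\Dki+\sqrt{-1}\mathfrak a_{\varTheta_i}^*$ down to the fundamental domain $\Dki+\sqrt{-1}\Cl(\mathfrak a_{\varTheta_i,+}^*)$ using $W(\varTheta_i)$-invariance.

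After inserting the definition \eqref{eqn:jfb0} the left-hand side becomes
\begin{align*}
\sum_{i=0}^r\frac{1}{|W|\cdot|W(\varTheta_i)|}&\int_{\mathfrak a}\overline{f(x)}\,\delta_\bsk(x)\\
&\times\int_{\Dki+\sqrt{-1}\mathfrak a_{\varTheta_i}^*}\phi(\lambda)\,F(\lambda,\bsk;x)\,d\nu_{\bsk,\varTheta_i}(\lambda)\,dx.
\end{align*}
Absolute integrability, required to apply Fubini, would follow from the compact support of $f$, the polynomial-in-$\lambda$ bound on $F(\lambda,\bsk;x)$ supplied by Lemma~\ref{lem:est}, the rapid decay of $\phi\in\mathcal S_\bsk$, and the polynomial bound on $|\cc^{\varTheta_i}(\lambda,\bsk)|^{-2}$ obtained from Lemma~\ref{lem:estcf} together with Lemma~\ref{lem:cTheta}. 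Once the order of integration is swapped, I would identify the inner $x$-integral with $|W|\,\overline{\mathcal F_\bsk f(\lambda)}$. The crucial point is that $F(\lambda,\bsk;x)$ is real-valued for every $\lambda\in\Dki+\sqrt{-1}\mathfrak a_{\varTheta_i}^*$: since $\bsk$ is real, \eqref{eqn:hgprop4} gives $\overline{F(\lambda,\bsk;x)}=F(\bar\lambda,\bsk;x)$, and the longest element of $W(\varTheta_i)\subset W$ (which acts as $-\operatorname{id}$ on $\mathfrak a_{\varTheta_i}$ while fixing $\mathfrak a(\varTheta_i)$ pointwise) sends $\lambda$ to $\bar\lambda$, so \eqref{eqn:hgprop1} yields $F(\bar\lambda,\bsk;x)=F(\lambda,\bsk;x)$. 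Combined with the reality of $\delta_\bsk$ and the definition \eqref{eqn:hgfo} of $\mathcal F_\bsk$, this produces the claimed identification.

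For the final step, the integrand $\phi(\lambda)\,\overline{\mathcal F_\bsk f(\lambda)}$ is $W(\varTheta_i)$-invariant: $\phi$ by the definition \eqref{eq:Schwartz} of $\mathcal S_\bsk$, and $\mathcal F_\bsk f$ by \eqref{eqn:hgprop1}; the measure $d\nu_{\bsk,\varTheta_i}$ is likewise $W(\varTheta_i)$-invariant, because $|\cc^{\varTheta_i}(\lambda,\bsk)|^{-2}$ is (as recalled in the proof of Theorem~\ref{thm:main} following \cite[Lemma~6.6]{Sh94}). Hence the $\sqrt{-1}\mathfrak a_{\varTheta_i}^*$-integral is $|W(\varTheta_i)|$ times the integral over the fundamental domain $\sqrt{-1}\Cl(\mathfrak a_{\varTheta_i,+}^*)$, cancelling the prefactor $|W(\varTheta_i)|^{-1}$; summing over $i$ then assembles everything into a single integral against $d\nu_\bsk$ over $\bigsqcup_{i=0}^r(\Dki+\sqrt{-1}\Cl(\mathfrak a_{\varTheta_i,+}^*))$, which is the right-hand side of the lemma. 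The only genuinely delicate point I anticipate is the absolute-integrability bookkeeping required to legitimize Fubini; the rest is a direct manipulation using symmetries already established in the paper.
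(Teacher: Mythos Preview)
Your proposal is correct and follows essentially the same approach as the paper: the paper's own proof is the one-line remark ``The result follows from Lemma~\ref{lem:estcf}, Lemma~\ref{lem:est} and Fubini's theorem,'' and you have simply unpacked what that entails---the absolute-integrability estimate, the reality of $F(\lambda,\bsk;x)$ on each $\Dki+\sqrt{-1}\mathfrak a_{\varTheta_i}^*$, and the $W(\varTheta_i)$-folding to the fundamental domain.
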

\begin{proof}
The result follows from Lemma~\ref{lem:estcf}, Lemma~\ref{lem:est} and Fubini's theorem.
\end{proof}

\begin{thm}[Plancherel theorem]\label{thm:plancherel}
Let $\bsk\in\mathcal{K}'_1$. Then for 
$f\in C_0^\infty(\mathfrak{a})^W$, 
\begin{equation}\label{eqn:plancherel}
\frac{1}{|W|}\int_{\mathfrak{a}}|f(x)|^2\delta_\bsk(x)dx
=\int_{\bigsqcup_{i=0}^r (\Dki+\sqrt{-1}\Cl(\mathfrak{a}_{\varTheta_i,+}^*))}
|\mathcal{F}_\bsk f(\lambda)|^2\,d\nu_\bsk(\lambda).
\end{equation}
Moreover, the hypergeometric Fourier transform $\mathcal{F}_\bsk$ extends to an isometry of 
$L^2(\mathfrak{a}\,; \frac{1}{|W|}\delta_\bsk(x)dx)^W$ onto 
$L^2(\bigsqcup_{i=0}^r (\Dki+\sqrt{-1}\Cl(\mathfrak{a}_{\varTheta_i,+}^*))\,;d\nu_\bsk)$. 
For any $\phi\in\mathcal S_\bsk$ it holds that
$\mathcal{F}_\bsk^{-1}\phi=\mathcal J_\bsk\phi$.
\end{thm}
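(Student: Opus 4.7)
The plan is to derive the Plancherel identity for $f\in C_0^\infty(\mathfrak a)^W$ directly from the inversion formula via Lemma~\ref{lem:FJadjunction}, then to upgrade it to an isometric isomorphism by density arguments, and finally to identify $\mathcal F_\bsk^{-1}$ with $\mathcal J_\bsk$ on $\mathcal S_\bsk$ by continuity. For the first step I would apply Lemma~\ref{lem:FJadjunction} with $\phi:=\mathcal F_\bsk f$, which lies in $\PW^W\subset\mathcal S_\bsk$ by Proposition~\ref{prop:uniest} together with Lemma~\ref{lem:PWinS}. The left-hand side of the lemma reads $\frac{1}{|W|}\int_{\mathfrak a}(\mathcal J_\bsk\mathcal F_\bsk f)(x)\,\overline{f(x)}\delta_\bsk(x)dx$; Theorem~\ref{thm:main} collapses $\mathcal J_\bsk\mathcal F_\bsk f$ to $f$, and the right-hand side becomes $\int|\mathcal F_\bsk f(\lambda)|^{2}d\nu_\bsk(\lambda)$, which is exactly \eqref{eqn:plancherel}.

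Since $C_0^\infty(\mathfrak a)^W$ is dense in $L^{2}(\mathfrak a;\tfrac{1}{|W|}\delta_\bsk(x)dx)^W$, the identity \eqref{eqn:plancherel} extends uniquely by continuity to an isometric embedding into $L^{2}(\bigsqcup_{i=0}^{r}(\Dki+\sqrt{-1}\Cl(\mathfrak a_{\varTheta_i,+}^{*}));d\nu_\bsk)$. For surjectivity, the image of this extended map is closed (as an isometric image of a Hilbert space) and contains $\mathcal F_\bsk(C_0^\infty(\mathfrak a)^W)=\PW^W$ by Theorem~\ref{thm:pw}. Lemma~\ref{lem:PWinS} makes (the restriction of) $\PW^W$ dense in $\mathcal S_\bsk$ in the Schwartz topology, and Lemma~\ref{lem:estcf} bounds $|\cc^{\varTheta_i}(\lambda,\bsk)|^{-2}$ polynomially on $\sqrt{-1}\mathfrak a_{\varTheta_i}^{*}$, so Schwartz convergence dominates $L^{2}(d\nu_\bsk)$-convergence; combined with the standard density of Schwartz functions in $L^{2}$, this shows the image is dense in the target space, hence equal to it.

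For the final statement, I would argue by approximation. For $\phi\in\PW^W$ the equality $\mathcal F_\bsk\mathcal J_\bsk\phi=\phi$ was already established in the proof of Theorem~\ref{thm:pw}, giving $\mathcal F_\bsk^{-1}\phi=\mathcal J_\bsk\phi$. Given $\phi\in\mathcal S_\bsk$, choose $\phi_n\in\PW^W$ with $\phi_n\to\phi$ in $\mathcal S_\bsk$ via Lemma~\ref{lem:PWinS}; then $\mathcal J_\bsk\phi_n=\mathcal F_\bsk^{-1}\phi_n\to\mathcal F_\bsk^{-1}\phi$ in $L^{2}$, and after passing to a subsequence, almost everywhere. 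On the other hand, applying dominated convergence to the defining integrals of each $\mathcal J_{\bsk,\varTheta_i}$, with Lemma~\ref{lem:est} bounding $F(\lambda,\bsk;x)$ and Lemma~\ref{lem:estcf} bounding $|\cc^{\varTheta_i}(\lambda,\bsk)|^{-2}$ polynomially, forces the pointwise convergence $\mathcal J_\bsk\phi_n(x)\to\mathcal J_\bsk\phi(x)$ for every $x\in\mathfrak a$. Matching the two limits yields $\mathcal F_\bsk^{-1}\phi=\mathcal J_\bsk\phi$ as elements of $L^{2}$. The main point requiring care is verifying that the Schwartz topology on $\mathcal S_\bsk$ really dominates $L^{2}(d\nu_\bsk)$-convergence; this hinges on the polynomial bound for $|\cc^{\varTheta_i}(\lambda,\bsk)|^{-2}$ supplied by Lemma~\ref{lem:estcf}, a recurring estimate throughout the paper.
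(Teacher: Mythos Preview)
Your argument is correct and, for the Plancherel identity \eqref{eqn:plancherel} and the isometric extension with dense (hence full) image, is exactly the paper's proof: apply Lemma~\ref{lem:FJadjunction} with $\phi=\mathcal F_\bsk f$, invoke Theorem~\ref{thm:main}, then use density of $\PW^W$ in $\mathcal S_\bsk$ (Lemma~\ref{lem:PWinS}) together with the polynomial estimate of Lemma~\ref{lem:estcf} to get density of the image in $L^2(d\nu_\bsk)$.

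For the final assertion $\mathcal F_\bsk^{-1}\phi=\mathcal J_\bsk\phi$ on $\mathcal S_\bsk$ you take an approximation route, pushing $\phi_n\in\PW^W$ to $\phi$ in the Schwartz topology and matching the $L^2$ limit with the pointwise limit of $\mathcal J_\bsk\phi_n$. This works, but the paper's argument is shorter and avoids the two-limits matching: for $\phi\in\mathcal S_\bsk$ and $g:=\mathcal F_\bsk^{-1}\phi$, the isometry gives $(g,f)=(\phi,\mathcal F_\bsk f)$ for every $f\in C_0^\infty(\mathfrak a)^W$, and the right-hand side is precisely the right-hand side of Lemma~\ref{lem:FJadjunction}; comparing with the left-hand side of that lemma yields $(g-\mathcal J_\bsk\phi,f)=0$ for all such $f$, hence $g=\mathcal J_\bsk\phi$ almost everywhere. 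In other words, a second application of the adjunction lemma replaces your approximation step entirely.
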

\begin{proof}
Applying Lemma \ref{lem:FJadjunction} to $\phi=\mathcal F_\bsk f$
we obtain \eqref{eqn:plancherel} by Theorem~\ref{thm:main}.
Thus $\mathcal{F}_\bsk$ extends to an isometry of 
$L^2(\mathfrak{a}\,; \frac{1}{|W|}\delta_\bsk(x)dx)^W$ into  
$L^2(\bigsqcup_{i=0}^r (\Dki+\sqrt{-1}\Cl(\mathfrak{a}_{\varTheta_i}^*,+))\,;d\nu_\bsk)$.
This is onto since the image is dense. 

Now let $\phi\in\mathcal S_\bsk$ and put $g=\mathcal{F}_\bsk^{-1}\phi \in L^2(\mathfrak{a}\,; \frac{1}{|W|}\delta_\bsk(x)dx)^W$.
Then for any $f\in C_0^\infty(\mathfrak{a})^W$
\[
(g,f)=(\phi,\mathcal F_\bsk f)=\int_{\bigsqcup_{i=0}^r (\Dki+\sqrt{-1}\Cl(\mathfrak{a}_{\varTheta_i,+}^*))}
\phi(\lambda)\, \overline{\mathcal{F}_\bsk f(\lambda)}\,d\nu_\bsk(\lambda).
\]
Here $(\cdot,\cdot)$ stands for the inner products of two Hilbert spaces.
Comparing this with Lemma \ref{lem:FJadjunction}, one sees $(g-\mathcal J_\bsk\phi)\delta_\bsk=0$
as a distribution on $\mathfrak a$.
Thus $g(x)=\mathcal J_\bsk\phi(x)$ almost everywhere.
\end{proof}

\begin{rem}
Let $\tilde\nu_\bsk$ denote the $W$-invariant measure on $\bigcup \varPi_\bsk$
whose restriction to $\lambda_{\mathfrak{a}(\varTheta_i)}+\sqrt{-1}\mathfrak{a}_{\varTheta_i}^*$
($0\le i\le r, \lambda_{\mathfrak{a}(\varTheta_i)}\in \Dki$) is
\[
\frac{|W^{\lambda_{\mathfrak{a}(\varTheta_i)}}|}{|W||W(\varTheta_i)|}\nu_\bsk
=
\frac{|W^{\lambda_{\mathfrak{a}(\varTheta_i)}}_{\varTheta_i}|}{|W|}\nu_\bsk.
\]
By Lemma \ref{lem:specrepr}, $L^2(\bigsqcup_{i=0}^r (\Dki+\sqrt{-1}\Cl(\mathfrak{a}_{\varTheta_i,+}^*))\,;d\nu_\bsk)$ is naturally identified with
$L^2(\bigcup \varPi_\bsk\,;d\tilde \nu_\bsk)^W$.

We expect $\mathcal J_\bsk\mathcal S_\bsk$
would coincide with the Schwartz space $\mathcal C(\mathfrak a,\bsk)^W$
introduced in \cite{DT99}.
In fact, Delorme \cite{DT99} proved, for each reduced root system,
that
$\mathcal F_\bsk \mathcal C(\mathfrak a,\bsk)^W$
coincides with a version of the Schwartz space which is defined in a similar way to $\mathcal S_\bsk$.
\end{rem}

\begin{cor}\label{cor:lthg}
Let $\bsk\in\mathcal{K}'_1$.  
Then the hypergeometric function $F(\lambda,\bsk)$ is square integrable if and only if 
$\lambda\in W\Dkr$. 
Each square integrable hypergeometric 
function is of the form 
\begin{equation}\label{eqn:sqinthg}
F(\lambda,\bsk)=\cc(\lambda,\bsk)\,\varPhi(\lambda,\bsk)\quad \text{for some }\,\,
\lambda\in \Dkr.
\end{equation}
For any $\lambda\in \Dkr$, 
\begin{equation}\label{eqn:dsip}
\frac{1}{|W|}\int_\mathfrak{a} F(\lambda,\bsk;x)^2\,\delta_\bsk(x)\,dx=\frac{1}{d_\B(\lambda,\bsk)}.
\end{equation}
For any $\lambda,\,\mu\in \Dkr$ with $\lambda\not=\mu$, 
\begin{equation}\label{eqn:dsoth}
\frac{1}{|W|}\int_\mathfrak{a} F(\lambda,\bsk;x)F(\mu,\bsk;x)\,\delta_\bsk(x)\,dx=0.
\end{equation}
\end{cor}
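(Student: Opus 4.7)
The plan is to combine Corollary~\ref{cor:tempered2} with the Plancherel theorem (Theorem~\ref{thm:plancherel}) to turn $L^2$-questions about hypergeometric functions into questions about the support and atoms of the spectral measure $\nu_\bsk$.

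I would first dispose of the ``if'' direction. Corollary~\ref{cor:tempered2} already asserts that for $\lambda\in\Dkr$ the function $F(\lambda,\bsk)$ is real-valued and square integrable, and \eqref{eqn:fthetadeg3} gives the product expression \eqref{eqn:sqinthg}. By the $W$-invariance \eqref{eqn:hgprop1}, square integrability extends from $\Dkr$ to $W\cdot\Dkr$.

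For the converse, suppose $F(\lambda_0,\bsk)$ is square integrable and set $\hat F_{\lambda_0}:=\mathcal F_\bsk F(\lambda_0,\bsk)\in L^2(d\nu_\bsk)$. The key step is to show that the support of $\hat F_{\lambda_0}$ lies in $W\lambda_0$, by exploiting that $F(\lambda_0,\bsk)$ is a joint eigenvector of $\mathbb D(\bsk)$ with eigenvalues $\gamma_\bsk(D)(\lambda_0)$. Concretely, for each real-coefficient $p\in S(\mathfrak a_\C)^W$ and each $f\in C_0^\infty(\mathfrak a)^W$ I would start from the identity $(D(\bsk,p)f,F(\lambda_0,\bsk))=(f,D(\bsk,p)F(\lambda_0,\bsk))=p(\lambda_0)(f,F(\lambda_0,\bsk))$ obtained by formal self-adjointness of $D(\bsk,p)$, transport both sides to the spectral side via the Plancherel isometry, and use \eqref{eq:dualop} on the $C_0^\infty$-side to rewrite $\mathcal F_\bsk(D(\bsk,p)f)=p(\lambda)\mathcal F_\bsk f$. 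Density of $\mathcal F_\bsk(C_0^\infty(\mathfrak a)^W)$ in $L^2(d\nu_\bsk)$ then yields $(p(\lambda)-p(\lambda_0))\hat F_{\lambda_0}(\lambda)=0$ in $L^2(d\nu_\bsk)$. Since $S(\mathfrak a_\C)^W$ separates $W$-orbits in $\acs$, this forces $\hat F_{\lambda_0}$ to be supported on $W\lambda_0$. Inspecting the spectral support $\bigsqcup_{i=0}^r(\Dki+\sqrt{-1}\Cl(\mathfrak a_{\varTheta_i,+}^*))$ given by Lemma~\ref{lem:specrepr} together with the definition \eqref{eqn:meas00i}, one sees that $\nu_\bsk$ has atoms exactly at the elements of $\Dkr$ (the stratum $i=r$ has $\mathfrak a_{\varTheta_r}^*=\{0\}$) and is absolutely continuous on every other stratum. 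Hence if $\lambda_0\notin W\Dkr$ then $W\lambda_0$ meets the fundamental domain only at non-atoms, so $\hat F_{\lambda_0}=0$ in $L^2(d\nu_\bsk)$; by the Plancherel isometry $F(\lambda_0,\bsk)=0$ in $L^2$, contradicting $F(\lambda_0,\bsk;0)=1$.

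For $\lambda_0\in\Dkr$ the support result refines to $\hat F_{\lambda_0}=c_0\mathbf{1}_{\{\lambda_0\}}$ with some real constant $c_0$. To determine $c_0$ I would take an arbitrary $f\in C_0^\infty(\mathfrak a)^W$, use the realness of $F(\lambda_0,\bsk)$ to identify $(f,F(\lambda_0,\bsk))_{L^2(\frac1{|W|}\delta_\bsk dx)}$ with $\mathcal F_\bsk f(\lambda_0)$, and then apply Plancherel to obtain $\mathcal F_\bsk f(\lambda_0)=c_0\, d_\B(\lambda_0,\bsk)\, \mathcal F_\bsk f(\lambda_0)$, forcing $c_0=d_\B(\lambda_0,\bsk)^{-1}$. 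Then \eqref{eqn:dsip} is simply $\|F(\lambda_0,\bsk)\|^2=\|\hat F_{\lambda_0}\|^2=c_0^2\, d_\B(\lambda_0,\bsk)=d_\B(\lambda_0,\bsk)^{-1}$, while \eqref{eqn:dsoth} follows at once from the orthogonality of distinct atomic indicators in $L^2(d\nu_\bsk)$. The main technical obstacle is the justification of the multiplication identity on the spectral side for the unbounded operators $D(\bsk,p)$ acting on the non-compactly-supported eigenfunction $F(\lambda_0,\bsk)$; keeping the test function $f$ in $C_0^\infty(\mathfrak a)^W$ throughout and invoking density, rather than trying to extend \eqref{eq:dualop} directly to $F(\lambda_0,\bsk)$, is the mechanism that circumvents this issue.
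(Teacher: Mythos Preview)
Your proposal is correct and follows essentially the same approach as the paper: both use test functions $f\in C_0^\infty(\mathfrak a)^W$ together with the Plancherel isometry and the eigenvalue relation for $p\in S(\mathfrak a_\C)^W$ to deduce that $(p(\lambda)-p(\lambda_0))\mathcal F_\bsk F(\lambda_0,\bsk)(\lambda)=0$ $\nu_\bsk$-a.e., and then invoke that the only atoms of $\nu_\bsk$ lie in $\Dkr$. The only cosmetic difference is that the paper obtains $\mathcal F_\bsk F(\xi,\bsk)=d_\B(\xi,\bsk)^{-1}\phi_\xi$ in one stroke by applying $\mathcal J_\bsk$ to the indicator $\phi_\xi\in\mathcal S_\bsk$ and using $\mathcal F_\bsk\mathcal J_\bsk=\mathrm{id}$, whereas you recover the constant $c_0$ via a pairing argument.
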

\begin{proof}
Let $\xi\in \Dkr$ and 
let $\phi_\xi(\lambda)\in \mathcal S_\bsk$ be the function which takes $1$ at $\lambda=\xi$
and $0$ elsewhere.
Then $\mathcal J_\bsk \phi_\xi(x)=d_{\B}(\xi,\bsk)\,F(\xi,\bsk; x)$.
Hence by Theorem \ref{thm:plancherel},
$F(\xi,\bsk,x)$ is square integrable and $\mathcal F_\bsk F(\xi,\bsk)=d_{\B}(\xi,\bsk)^{-1} \phi_\xi$.
(We already know the square integrability by Corollary~\ref{cor:tempered2}.)
Now \eqref{eqn:dsip} and \eqref{eqn:dsoth} are immediate.

Let us prove $F(\xi,\bsk)$'s exhaust the square integrable hypergeometric functions.
Assume that $F(\mu,\bsk)$ is  square integrable for some $\mu\in\acs$.
Let $p\in S(\mathfrak a_{\mathbb C})^W$ and let $f\in C_0^\infty(\mathfrak{a})^W$.
By \eqref{eq:CFlF}, \eqref{eq:adjunction}, \eqref{eq:dualop} and Theorem \ref{thm:plancherel} we have
\begin{align*}
&\int_{\bigsqcup_{i=0}^r (\Dki+\sqrt{-1}\Cl(\mathfrak{a}_{\varTheta_i,+}^*))}
p(\mu) \mathcal{F}_\bsk F(\mu,\bsk)(\lambda)
\, \overline{\mathcal{F}_\bsk f(\lambda)}\,d\nu_\bsk(\lambda)\\
&=\frac{p(\mu)}{|W|} \int_{\mathfrak a} F(\mu,\bsk;x) \overline{f(x)}\delta_\bsk(x)dx \\
&=
\frac{1}{|W|} \int_{\mathfrak a} (T(\bsk,p)F(\mu,\bsk;x)) \overline{f(x)}\delta_\bsk(x)dx \\
&=
\frac{1}{|W|} \int_{\mathfrak a} F(\mu,\bsk;x) (T(\bsk,p)\overline{f(x)})\delta_\bsk(x)dx \\
&=
\int_{\bigsqcup_{i=0}^r (\Dki+\sqrt{-1}\Cl(\mathfrak{a}_{\varTheta_i,+}^*))}
\mathcal{F}_\bsk F(\mu,\bsk)(\lambda)
\, \overline{\mathcal{F}_\bsk T(\bsk,p^\vee )f(\lambda)}\,d\nu_\bsk(\lambda)\\
&=
\int_{\bigsqcup_{i=0}^r (\Dki+\sqrt{-1}\Cl(\mathfrak{a}_{\varTheta_i,+}^*))}
p(\lambda) \mathcal{F}_\bsk F(\mu,\bsk)(\lambda)
\, \overline{\mathcal{F}_\bsk f(\lambda)}\,d\nu_\bsk(\lambda).
\end{align*}
Here $p^\vee(\lambda)=\overline{p(\bar \lambda)}$.
Note that $p(\bar \lambda)=p(\lambda)$ on $\bigsqcup_{i=0}^r (\Dki+\sqrt{-1}\Cl(\mathfrak{a}_{\varTheta_i,+}^*))$
by the $W$-invariance.
Thus by Lemma \ref{lem:PWinS}, $(p(\mu)-p(\lambda))\mathcal{F}_\bsk F(\mu,\bsk)(\lambda)=0$
except on a null set of $\nu_\bsk$.
Since $p$ can be chosen arbitrarily, this is possible only when $\mu=w\xi$ for some $w\in W$
and $\xi\in \Dkr$.

The second statement of the corollary follows from Theorem \ref{thm:temperedtheta1}.
\end{proof}

\begin{rem}
The square integrable hypergeometric functions are analytic continuation of the Jacobi polynomials. 
This fact was observed by  \cite[Remark~5.12]{Sh94} for the group case and 
mentioned without proof in \cite[\S 6]{BHO}. 

If $\be<0$, then $\lambda\in \Dkr$ if and only if $\lambda_r<0$ and 
$\mu=\lambda-\rho(\bsk)$ satisfies
\[
\mu_1\in 2\mathbb{N},\quad \mu_{i+1}-\mu_i\in 2\mathbb{N}\,\,(1\leq i\leq r-1). 
\]
Thus,  $\Dkr$ can be written as
\begin{align}
\Dkr=\{\mu+\rho(\bsk)\in\mathfrak{a}^*\,;\, & \mu+\rho(\bsk)\in  -w_\varXi^*\, \mathfrak{a}_+^*\,\,\\
& 
\text{ and }\,
\langle\mu,\alpha^\vee\rangle
\in \mathbb{N} \,\,\,\text{for all}\,\,\,\alpha\in\Rp\}.\notag
\end{align}
 Here $w_\varXi^*$ denotes the longest element of $W_\varXi=\mathfrak{S}_r$.

If $\bsk\in\mathcal{K}_+$
and 
$\mu\in\mathfrak{a}^*$ satisfies $\langle\mu,\alpha^\vee\rangle\in \mathbb{N}$ for all $
\alpha\in\Rp$, then $\varPhi(\mu+\rho(\bsk),\bsk)=P(\mu,\bsk)$, 
which  is the 
Jacobi polynomial of the highest weight $\mu$ (cf. \cite{HO87, Hec87, {Hec94}}). 
Note $\bsk\in\mathcal{K}_+$ if and only if $\al-\be\geq 0,\,\be\geq -\frac12$, and $\bsk_m\geq 0$. 
The function $F(\mu+\rho(\bsk),\bsk)=\cc(\mu+\rho(\bsk),\bsk)P(\mu,\bsk)$ forms a 
one parameter family of polynomials with analytic parameter $\be$.
If 
\[
\be<-\al-2(r-1)\bsk_m-1,
\]
then $\Dkr\not=\emptyset$ and $F(\mu+\rho(\bsk),\bsk)\,\,(\mu+\rho(\bsk)\in \Dkr)$ 
is square integrable. On the one hand, for $\bsk\in \mathcal{K}_+$ the $L^2$-norm of 
the constant multiple $\cc(\mu+\rho(\bsk),\bsk)P(\mu,\bsk)$ of the Jacobi polynomial is an integral 
of $\cc(\mu+\rho(\bsk),\bsk)^2 P(\mu,\bsk)^2\,\delta_\bsk$ 
over a compact torus and its explicit formula is given by  \cite[Corollary~3.5.3]{Hec94}. 
On the other hand, for $\bsk$ satisfying \eqref{eqn:mcond2} and $\mu+\rho(\bsk)\in \Dkr$ 
the $L^2$-norm of $F(\mu+\rho(\bsk),\bsk)$ 
 is an integral 
of $\frac{1}{|W|}F(\mu+\rho(\bsk),\bsk)^2\,\delta_\bsk$  over $\mathfrak{a}$ and its explicit formula is given by 
\eqref{eqn:dsip} and \eqref{eqn:fdefd}. 
Comparing these formulas we can observe that the two norms coincide up to a constant multiple that depends only on $\bsk$. 

We can reduce the case of $\be>0$ to the case of $\be<0$ by \eqref{eq:ksymm}.
\end{rem}

\subsection*{Acknowledgments} 
The authors would like to thank Professor Toshio Oshima for helpful discussions on the subject matter 
of this paper. 
The authors are grateful to the anonymous referee for 
valuable comments and suggestions.

\end{document}